\newtheorem{theorem}{Theorem}[section]
\newtheorem{proposition}[theorem]{Proposition}
\newtheorem{corollary}[theorem]{Corollary}
\newtheorem{lemma}[theorem]{Lemma}
\theoremstyle{remark}
\newtheorem{definition}[theorem]{Definition}
\newtheorem{remark}[theorem]{Remark}
\newtheorem{example}[theorem]{Example}
\newtheorem{problem}[theorem]{Problem}
\newtheorem{assumptions}[theorem]{Assumptions}
\newtheorem{conjecture}[theorem]{Conjecture}
\numberwithin{equation}{section}
\begin{document}

\begin{abstract}
We introduce the study of frames and equiangular lines in classical geometries over finite fields.
After developing the basic theory, we give several examples and demonstrate finite field analogs of equiangular tight frames (ETFs) produced by modular difference sets, and by translation and modulation operators.
Using the latter, we prove that Gerzon's bound is attained in each unitary geometry of dimension $d = 2^{2l+1}$ over the field $\mathbb{F}_{3^2}$.
We also investigate interactions between complex ETFs and those in finite unitary geometries, and we show that every complex ETF implies the existence of ETFs with the same size over infinitely many finite fields.
\end{abstract}

\title[Frames over finite fields]{Frames over finite fields: Basic theory and equiangular lines in unitary geometry}
\author{Gary R.~W.~Greaves}
\address{School of Physical and Mathematical Sciences, Nanyang Technological University, Singapore 637371}
\email{gary@ntu.edu.sg}
\author{Joseph W.~Iverson}
\address{Department of Mathematics, Iowa State University, Ames, IA USA 50010}
\email{jwi@iastate.edu}
\author{John Jasper}
\address{Department of Mathematics and Statistics, South Dakota State University, Brookings, SD USA 57007}
\email{john.jasper@sdstate.edu}
\author{Dustin G.~Mixon}
\address{Department of Mathematics, The Ohio State University, Columbus, OH USA 43210}
\email{mixon.23@osu.edu}
\maketitle

\section{Introduction}

How many equiangular lines through the origin can be packed in $d$-dimensional space?
This basic yet challenging geometric problem has been an active area of research for at least 50 years, and in that time researchers have identified fundamental connections with such diverse areas as algebraic combinatorics, data science, and quantum information theory.
The maximum number of equiangular lines in $d$-dimensional space depends on the underlying base field, and before now research has focused on the real numbers and their extensions (usually just the real and complex numbers, but quaternions and octonions are also studied~\cite{CKM16,ET20,W20}).
In this paper and its companion~\cite{FFF2}, we initiate a study of equiangular lines over finite fields.

To further elucidate, take $\mathbb{F} \in \{\mathbb{R},\mathbb{C}\}$ and equip $\mathbb{F}^d$ with the usual inner product $(\cdot,\cdot)$.
Throughout, we abbreviate $[n] := \{1,\dotsc,n\}$.
A \emph{line} in $\mathbb{F}^d$ is simply a one-dimensional subspace $\ell \leq \mathbb{F}^d$, and a sequence $\{ \ell_j \}_{j \in [n]}$ of lines is said to be \emph{equiangular} with parameter $b \in [0,1)$ if unit-norm representatives $\varphi_j \in \ell_j$ satisfy $| ( \varphi_i, \varphi_j ) |^2 = b$ whenever $i \neq j$.
Here, $b$ represents the square of the cosine of the angle between $\ell_i$ and $\ell_j$.
An upper bound normally attributed to Michael Gerzon states that $n$ equiangular lines exist in $\mathbb{F}^d$ only if $n \leq \frac{d(d+1)}{2}$ for $\mathbb{F} = \mathbb{R}$, and only if $n \leq d^2$ for $\mathbb{F} = \mathbb{C}$~\cite{LS73}.
In the real setting, \textbf{Gerzon's bound} is saturated for $d \in \{2,3,7,23\}$, but not for any other $d < 89$.
For example, $n=3$ equiangular lines in $\mathbb{R}^2$ are spanned by vectors that resemble the Mercedes--Benz logo, and $n=6$ equiangular lines in $\mathbb{R}^3$ connect antipodal vertices of a 20-sided die (regular icosahedron).
In contrast with the real case, the complex version of Gerzon's bound is conjectured to be achieved in every dimension:
\begin{conjecture}
\label{conj: Zauner complex}
For every $d \geq 2$, there exist $d^2$ equiangular lines in $\mathbb{C}^d$.
\end{conjecture}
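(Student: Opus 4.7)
The statement is the celebrated \emph{Zauner conjecture} (or SIC-POVM existence conjecture), which the paper itself frankly labels as open, so any ``proof proposal'' is aspirational rather than conclusive. The standard plan, going back to Zauner's 1999 thesis, is to seek an equiangular configuration covariant under the Weyl--Heisenberg group. Concretely, let $T, M \in U(d)$ denote the cyclic shift and the diagonal modulation by $d$-th roots of unity, and consider the orbit of a single unit vector $\varphi$ --- the \emph{fiducial} --- under the $d^2$ operators $M^a T^b$, $(a,b) \in \mathbb{Z}_d \times \mathbb{Z}_d$. Heisenberg covariance reduces equiangularity of the resulting $d^2$ lines to the single system
\begin{equation*}
\bigl|\langle \varphi, M^a T^b \varphi \rangle\bigr|^2 = \frac{1}{d+1} \qquad \text{for all } (a,b) \neq (0,0).
\end{equation*}
So the first step of the plan is to reduce existence of $d^2$ equiangular lines to the existence of such a $\varphi$.

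The second step is to exploit the normalizer of the Heisenberg group inside $U(d)$, the \emph{Clifford group}. Zauner singled out a canonical order-$3$ Clifford element $Z$ and conjectured that a fiducial can always be found in its $(-1)$-eigenspace, which cuts the search down to roughly $d/3$ complex parameters. The third, and substantive, step is to actually produce such a $\varphi$ uniformly in $d$. Numerical and exact work by Appleby, Bengtsson, Flammia, Grassl, Scott, and collaborators strongly suggests that the entries of $\varphi$ lie in an abelian extension of the real quadratic field $\mathbb{Q}\bigl(\sqrt{(d-3)(d+1)}\bigr)$ and are expressible in terms of Stark units; the plan would be to write $\varphi$ down explicitly via class-field theory and then verify the inner product identities term by term.

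The main obstacle, of course, is step three: the system displayed above is wildly overdetermined --- roughly $d^2$ real quadratic equations in $O(d)$ real unknowns --- and at present existence is known only case by case, via exact computer algebra in dimensions up into the forties and numerically in every tested dimension up through several hundred. A uniform construction is intimately tied to the Hilbert twelfth problem for real quadratic fields, which is itself open; there is no currently available technique that produces fiducials in every $d$. This is precisely why the present paper pivots to positive characteristic: as the abstract advertises, Weyl--Heisenberg orbits over $\mathbb{F}_{3^2}$ \emph{do} saturate the analogous Gerzon bound in each unitary dimension $d = 2^{2l+1}$, suggesting that the characteristic-$p$ analogue of Conjecture~\ref{conj: Zauner complex} is strictly more tractable than the complex case, which for now remains wide open.
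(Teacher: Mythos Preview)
Your assessment is correct: Conjecture~\ref{conj: Zauner complex} is stated in the paper as an open conjecture, not a theorem, and the paper makes no attempt to prove it. There is therefore no ``paper's own proof'' against which to compare your proposal. You have accurately identified this, and your summary of the standard Weyl--Heisenberg/Clifford-group program, together with the number-theoretic obstacles tied to Stark units and real quadratic fields, is well informed and faithfully reflects the state of the art that the paper itself alludes to when it calls the problem ``wide open.''

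The only thing worth adding is that your write-up is explicitly \emph{not} a proof and does not claim to be one; step three of your plan is, as you say, the entire difficulty, and nothing in the proposal surmounts it. So while there is no gap in the sense of a flawed argument, there is also no argument to evaluate: you have (appropriately) written a survey of a conjectural program rather than a proof. That is the right response to being asked to prove an open conjecture.
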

This is known as \textbf{Zauner's conjecture}, since an equivalent conjecture first appeared in his 1999 PhD thesis~\cite{Z99}.\footnote{Elsewhere in the literature, ``Zauner's conjecture'' 
may refer either to Conjecture~\ref{conj: Zauner complex} or to one of several stronger statements that posit the existence of equiangular lines with additional structure.
%sometimes refers to Conjecture~\ref{conj: Zauner complex}, and in other instances it refers to one of several stronger statements that posit the existence of equiangular lines with additional structure.
We do not have any additional structure in mind when we use this term.}
It is a basic problem in quantum information theory, and $d^2$ orthogonal projections onto equiangular lines in $\mathbb{C}^d$ form a particularly nice basis for the real space of $d\times d$ self-adjoint matrices.
For this reason and others, Zauner's conjecture has been the subject of intense interest since its introduction over 20 years ago, and recently a monetary prize was announced for its resolution~\cite{HRZ20}.
Nevertheless, the problem remains wide open, and at present there are only finitely many dimensions $d$ for which $d^2$ equiangular lines are known to exist in $\mathbb{C}^d$.

More is known about the maximum number of equiangular lines with a given angle.
The \textbf{relative bound} states that $n$ equiangular lines with parameter $b \in [0,\frac{1}{d})$ exist in $\mathbb{F}^d$ only if $n \leq \frac{d(1-b)}{1-bd}$.
Conditions for equality are understood in terms of \emph{frame theory}.
To elaborate, let $\{ \varphi_j \}_{j \in [n]}$ be a sequence of column vectors in $\mathbb{C}^d$, expressed as a $d\times n$ matrix $\Phi = \begin{bmatrix} \varphi_1 & \dotso & \varphi_n \end{bmatrix}$.
We call $\Phi$ a \emph{tight frame} with constant $c>0$ if $\Phi \Phi^* = c I$; an \emph{equiangular tight frame} (ETF) is a tight frame consisting of equal-norm representatives for equiangular lines.
The matrix representing a tight frame is optimally conditioned, and the columns of a ${d \times n}$ ETF have optimal coherence:
they span $n$ lines in $\mathbb{F}^d$ whose sharpest (acute) angle is as wide as possible.
These features and others make ETFs suitable for applications in areas such as wireless communication~\cite{SH03},
compressed sensing~\cite{BFMW13}, digital fingerprinting~\cite{MQKF13}, and quantum information theory~\cite{RBSC04}.
It is known that a sequence of equiangular lines $\{ \ell_j \}_{j\in [n]}$ with parameter $b$ in $\mathbb{F}^d$ saturates the relative bound if and only if equal-norm representatives $\varphi_j \in \ell_j$ form the columns of an ETF.
Furthermore, any sequence of equiangular lines that attains Gerzon's bound also creates an ETF by taking equal-norm representatives, so that Zauner's conjecture is equivalent to the existence of a $d \times d^2$ complex ETF for every $d \geq 2$.
Beyond Zauner's conjecture we have the more general (and similarly formidable) existence problem of complex ETFs, for which cash prizes are also available~\cite{M:SFM1,M:SFM2}.

\begin{problem}
\label{prob: complex ETF existence}
For which pairs $(d,n)$ does a complex $d\times n$ ETF exist?
\end{problem}

A variety of complex ETF constructions are known, including many infinite families of sizes $(d,n)$ with $n < d^2$~\cite{FM:T,CGSZ16,FJMP18,FJMPW17,FJM16,FMT12,IJM:FGA,IJM:NAG}.
However, very little is understood about nonexistence, and the known list of necessary conditions is as short as this: $n \leq d^2$, $n \leq (n-d)^2$, and $(d,n) \notin{\{ (3,8),} {(5,8) \}}$.
Apart from Gerzon's bound, $3 \times 8$ and $5 \times 8$ are the only sizes for which a complex ETF is known not to exist, and this was proven by means of a Gr\"{o}bner basis calculation that does not appear to be feasible for larger sizes~\cite{Sz14}.
This state of affairs is particularly grim when compared with the case of real ETFs, for which a great many nonexistence results are known, a success that may be due to the fundamentally discrete nature of the sign pattern in the Gram matrix $\Phi^\top \Phi$ of a real ETF~\cite{STDH07,LS73,FM:T}.

Faced with slow progress on problems of import, we take here the advice of Polya and \emph{vary the problem}~\cite{P14}.
In particular, we observe that each property of interest (equiangular lines, tight frame, ETF) may be expressed using only polynomials and an order-2 field automorphism (complex conjugation).
From this light, our problems are fundamentally algebraic, and they readily admit generalizations over any field, or even $\ast$-ring.

In this article and its companion~\cite{FFF2} we focus on ETFs over finite fields.
Here we join a long tradition of investigating finite field analogs of problems originally posed over real and complex numbers.
Among the many examples, we mention 
the local/global principle of number theory~\cite{Ma93}, 
the Ax--Grothendieck theorem~\cite{Grot66},
sum--product estimates~\cite{BKT04},
the Erd\H{o}s--Falconer distance problem~\cite{IR07},
the Kakeya problem~\cite{Dv09},
and Roth's theorem on arithmetic progressions~\cite{Cr17,EG17}.
For each of these examples, the generalization to finite fields proved to be fruitful, either by producing insight or traction for the original problem, or by creating an alternate arena that demonstrated interest in its own right.
We also find a precursor within frame theory, in the work of Bodmann et al. on frames over $\mathbb{F}_2$ and their relationship with coding theory
(a subject and viewpoint not represented here, since we do not concern ourselves with Hamming distance)~\cite{BLRTT09}.

Broadly speaking, when varying a problem one hopes to balance two competing goals.
On the one hand, the new problem should have enough features in common that it could reasonably provide some insight for the original one.
On the other hand, the new problem should have some strikingly different features that provide new openings for attack.

Both of these objectives are met for ETFs over finite fields.
The new problem has the same linear algebraic expression as the old one, and we are able to derive a basic theory that includes parallels of many of the most important results, including factorization of Gram matrices (Theorems~\ref{thm: factor U} and~\ref{thm: factor O}) and Gerzon's bound (Theorem~\ref{thm: Gerzon}).
Furthermore, we show that the existence of a $d \times n$ complex ETF implies the existence of $d \times n$ ETFs over infinitely many finite fields (Theorem~\ref{thm: C to F}); in this sense the finite field problem is properly a generalization of the complex one.
In the case of finite orthogonal geometries, ETFs over finite fields sometimes imply the existence of real ETFs~\cite{FFF2}.
Meanwhile, the finite field setting provides new features and new tools, most notably the existence of isotropy (nonzero vectors being orthogonal to themselves) and the absence of norm positivity.
Furthermore, the finite vector spaces under consideration provide opportunities for new computational methods (such as exhaustive search).
Finally, the discrete nature of finite fields suggests comparison with real ETFs, which are understood much better than their complex counterparts.

Thanks to these differences, we are able to prove for finite fields what has eluded researchers in the complex setting for more than 20 years: we prove that Gerzon's bound is attained in infinitely many dimensions.
Specifically, for unitary geometries over $\mathbb{F}_{3^2}$ we use translations and modulations to produce a $d \times d^2$ ETF whenever ${d = 2^{2l+1}}$ (Theorem~\ref{thm: Zauner}).
Our construction may be viewed as generalizing Hoggar's lines to an infinite family over a finite field (Remark~\ref{rem: Hoggar}); in particular, our construction leverages an irreducible representation of the Heisenberg group over the elementary abelian group $\mathbb{Z}_2^{2l+1}$, as opposed to the cyclic group $\mathbb{Z}_{2^{2l+1}}$.
Similarly, in the companion paper~\cite{FFF2} we prove that Gerzon's bound is attained in a finite orthogonal geometry of dimension $d$ whenever $|d-7|$ is not a power of~2.
Overall, finite fields provide a fruitful environment in which to study ETFs, and due to their rich theory they appear to be worthy of study in their own right.

This paper is organized in two parts.
Part~\ref{part:1} develops the basic theory necessary for a rigorous investigation in the sequel and in the companion paper~\cite{FFF2}.
Throughout Part~\ref{part:1} we treat both orthogonal and unitary geometries over finite fields, corresponding to real and complex Hilbert spaces, respectively.
We begin with a review of the theory of forms in Section~\ref{sec: prelim}, followed by an exposition of frame theory over finite fields in Section~\ref{sec: frame theory}.
Section~\ref{sec: equiangular} treats equiangular lines and culminates in Gerzon's bound (Theorem~\ref{thm: Gerzon}).

Part~\ref{part:2} focuses on ETFs in unitary geometries, which are the finite field analog of complex Hilbert spaces.
In Section~\ref{sec: first ex}, we give our first examples of such ETFs, and we show that a generalization of difference sets creates ETFs just as in the complex setting (Theorem~\ref{thm: harm diff}).
Section~\ref{sec: Gabor} develops the theory of Gabor frames over finite fields.
After proving that every Gabor frame is tight (Proposition~\ref{prop: Gabor tight frame}), we explicitly describe fiducial vectors that create $d \times d^2$ ETFs in infinitely many dimensions over $\mathbb{F}_{3^2}$ (Theorem~\ref{thm: Zauner}).
Finally, Section~\ref{sec: C to F} demonstrates connections with the complex setting.
We first prove that when a complex $d\times n$ ETF exists, there is also a complex $d\times n$ ETF with algebraic entries (Theorem~\ref{thm:alg}).
Then we show that every algebraic ETF projects into infinitely many finite fields (Theorem~\ref{thm:project}), so that every complex ETF implies a multitude of finite field ETFs having the same size (Theorem~\ref{thm: C to F}).
The paper ends with some open problems for future research.
(Additional open problems are scattered throughout.)

\part{Basic theory}
\label{part:1}

\section{Preliminaries}
\label{sec: prelim}
For the sake of accessibility, we begin with a short review of the theory of forms.
Standard references include \cite{Gr02,T92,Ar57}.

\begin{assumptions}
Throughout Part~1, we fix a field $\mathbb{F}$ and a field automorphism $\sigma \colon \mathbb{F} \to \mathbb{F}$ (possibly the identity) that satisfies $\sigma^2 = 1$.
The subfield fixed by $\sigma$ is denoted $\mathbb{F}_0 = \{ a \in \mathbb{F} : a^\sigma = a \}$.
We also fix a vector space $V$ over $\mathbb{F}$ of finite dimension $d = \dim V$, and we assume $V$ is equipped with a form $\langle \cdot , \cdot \rangle \colon V \times V \to \mathbb{F}$ that satisfies the following conditions:
	\begin{itemize}
	\item[(F1)]
	for every $u \in V$, the induced mapping $\langle u, \cdot \rangle \colon V \to \mathbb{F}$ is linear,
	\item[(F2)]
	$\langle u, v \rangle = \langle v, u \rangle^\sigma$ for every $u,v \in V$,
	\item[(F3)]
	if $u \in V$ satisfies $\langle u,v \rangle$ for every $v \in V$, then $u = 0$.
	\end{itemize}
We write $Q \colon V \to \mathbb{F}$ for the associated \textbf{quadratic form} given by $Q(v) = \langle v , v \rangle$.
By (F1) and (F2), $\langle \cdot, \cdot \rangle$ is linear in the second variable and $\sigma$-linear in the first.
\end{assumptions}

Assumptions (F1)--(F3) may also be expressed in terms of matrices.
Specifically, for a matrix $A  = \begin{bmatrix} a_{ij} \end{bmatrix} \in \mathbb{F}^{m\times n}$ we denote $A^* = \begin{bmatrix} a_{ji}^\sigma \end{bmatrix} \in \mathbb{F}^{n\times m}$.
Choose any basis $e_1,\dotsc,e_d \in V$ with coordinate transformation $T \colon V \to \mathbb{F}^d$.
Then conditions (F1)--(F3) are equivalent to the existence of a \textbf{Gram matrix} $M \in \mathbb{F}^{d\times d}$ satisfying:
	\begin{itemize}
	\item[(F1')]
	$\langle u, v \rangle = (Tu)^* M (Tv)$ for every $u,v \in V$,
	\item[(F2')]
	$M=M^*$,
	\item[(F3')]
	$M$ is invertible.
	\end{itemize}
Explicitly, $M = \begin{bmatrix} \langle e_i, e_j \rangle \end{bmatrix} \in \mathbb{F}^{d\times d}$.

By (F3'), $\det M$ belongs to the multiplicative group $\mathbb{F}^\times$ of $\mathbb{F}$.
Throughout the paper, we write $\mathbb{F}^{\times 2} \leq \mathbb{F}^\times$ for the subgroup of quadratic residues, and in this notation the \textbf{discriminant} of $V$ is defined to be
\[
\operatorname{discr}(V) = (\det M) \mathbb{F}^{\times 2} \in \mathbb{F}^\times / \mathbb{F}^{\times 2}.
\]
It is an invariant of $\langle \cdot , \cdot \rangle$ and independent of the choice of basis behind $M$.

The following space plays the same role as $\ell^2$ in classical frame theory.

\begin{definition}
\label{def: real complex models}
We write $(\cdot, \cdot)$ for the form on $\mathbb{F}^n$ with Gram matrix $M = I$, i.e.,
\[
(x,y) = x^* y = \sum_{i \in [n]} x_i^\sigma y_i
\quad
\text{for }x = \{ x_i \}_{i \in [n]}, \, y = \{ y_i \}_{i \in [n]} \in \mathbb{F}^n.
\]
(Notice that we take $\mathbb{F}^n$ to consist of column vectors, and that $(\cdot, \cdot)$ is conjugate-linear in the first variable.)
If $\sigma = 1$, then we refer to $\mathbb{F}^n$ equipped with $(\cdot, \cdot)$ as a \textbf{real model}, since it is reminiscent of the real Hilbert space $\mathbb{R}^n$.
If $\sigma \neq 1$, then we call it a \textbf{complex model}.
\end{definition}

Next, let $W \leq V$ be a subspace.
Its \textbf{orthogonal complement} is the subspace
\[
W^\perp = \{ u \in V : \langle w, u \rangle = 0 \text{ for every }w \in W\},
\]
which satisfies $\dim W + \dim W^\perp = V$.
We say $W$ is \textbf{nondegenerate} if $W \cap W^\perp = \{0\}$; equivalently, the restriction of $\langle \cdot, \cdot \rangle$ to $W$ again satisfies (F1)--(F3).
At the other extreme, $W$ is called \textbf{totally isotropic} if $W \leq W^\perp$, that is, $\langle u,v \rangle = 0$ for every $u,v \in W$.
In that case, $\dim W \leq \tfrac{1}{2} \dim V$.

Now let $W$ be another vector space over $\mathbb{F}$ with a form satisfying (F1)--(F3).
Every linear map $A \colon W \to V$ has a unique \textbf{adjoint} $A^\dagger \colon V \to W$ satisfying $\langle A u, v \rangle = \langle u, A^\dagger v \rangle$ for every $u \in W$ and $v \in V$.
Its kernel is $\operatorname{Ker} A^\dagger = (\operatorname{Im} A)^\perp$, and in particular, $A^\dagger$ is one-to-one if and only if $A$ is onto (and vice versa).
Adjoints are related to but distinct from conjugate transpose matrices, and a matrix for $A^\dagger$ can be found as follows.
After choosing bases, we may assume there are matrices $M$ and $N$ such that $V = \mathbb{F}^m$ has form $\langle u, v \rangle = u^* M v$ and $W = \mathbb{F}^n$ has form $\langle x, y \rangle = x^* N y$.
If we identify operators between $\mathbb{F}^n$ and $\mathbb{F}^m$ with matrices, then the adjoint of $A \in \mathbb{F}^{m\times n}$ is $A^\dagger = N^{-1} A^* M \in \mathbb{F}^{n \times m}$.

We define the \textbf{isometry group} of $V$ to be the group $\operatorname{I}(V) \leq \operatorname{GL}(V)$ of operators $A \colon V \to V$ satisfying $A^\dagger A = I$.
More generally, $\Delta(V) \leq \operatorname{GL}(V)$ is the group of all $A$ satisfying $A^\dagger A = c I$ for $c \in \mathbb{F}^\times$.
The distinction between the two groups is necessary: every scalar multiple of an isometry belongs to $\Delta(V)$, but they may form a proper subgroup.

We assume no more than (F1)--(F3) in general, but we will be especially interested in the following special cases, which represent classical geometries over finite fields.

\begin{definition}
\label{def: Case OU}
We say we are in \textbf{Case O} if $\mathbb{F} = \mathbb{F}_q$ is a finite field of odd order $q$, and $\sigma = 1$.
On the other hand, \textbf{Case U} occurs when $\mathbb{F} = \mathbb{F}_{q^2}$ is finite (possibly of even order) and $\sigma$ is given by $\alpha^\sigma = \alpha^q$.
\end{definition}

In both Case O and Case U, $q$ is necessarily a prime power since it is the size of a finite field.
For Case O, we choose to specify that $q$ is odd since the theory of quadratic forms in even characteristic is fundamentally different.
In reference to Case~U, we remark that $\alpha^\sigma = \alpha^q$ describes the only nontrivial field automorphism on $\mathbb{F}_{q^2}$ that satisfies $\sigma^2 = 1$.

\smallskip

Suppose for the moment that Case~O occurs.
Then $\mathbb{F}_0 = \mathbb{F}$, and $\langle \cdot, \cdot \rangle$ is a nondegenerate symmetric bilinear form.
We call $V$ a \textbf{quadratic space} and say it has an \textbf{orthogonal geometry}.
Since $q$ is odd, $\langle \cdot, \cdot \rangle$ can be recovered from $Q$ through the polarization identity
\[
\langle u, v \rangle = \tfrac{1}{2} \bigl[ Q(u+v) - Q(u-v) \bigr],
\quad \text{for }u,v \in V.
\]
Up to isomorphism there are exactly two possibilities for $\langle \cdot, \cdot \rangle$, corresponding to whether or not $\operatorname{discr}(V) \in \mathbb{F}^\times / \mathbb{F}^{\times 2}$ is trivial.
Given another quadratic space $W$ over $\mathbb{F}_q$, there exists an isomorphism $V \to W$ that preserves the form if and only if $\dim V = \dim W$ and $\operatorname{discr}(V) = \operatorname{discr}(W)$.
In particular, given an invertible matrix $M \in \mathbb{F}^{d\times d}$, there exists a basis for $V$ having matrix $M$ as a Gram matrix if and only if $(\det M)\mathbb{F}^{\times 2} = \operatorname{discr}(V)$.
Thus, $V$ admits an orthonormal basis if and only if $\operatorname{discr}(V)$ is trivial, if and only if $V$ is isometrically isomorphic with the real model on $\mathbb{F}_q^d$.
The isometry group is denoted $\operatorname{I}(V) = \operatorname{O}(V)$ since it is an example of a classical \textbf{orthogonal group}.
If $d = \dim V$ is even, then $\operatorname{O}(V) \times \mathbb{F}_q^\times$ is a proper subgroup of $\Delta(V)$ since there exist operators satisfying $A^\dagger A = \alpha I$, where $\alpha \in \mathbb{F}^\times$ is not a quadratic residue.
If $d$ is odd then the isomorphism type of $\operatorname{O}(V)$ does not depend on $\operatorname{discr}(V)$, and $\Delta(V) = \operatorname{O}(V) \times \mathbb{F}_q^\times$ consists of nonzero scalar multiples of orthogonal matrices.
\smallskip

Now suppose we are in Case U.
Then $\langle \cdot, \cdot \rangle$ is a nondegenerate Hermitian form, and $(V,\langle \cdot, \cdot \rangle)$ is isometrically isomorphic with the complex model on $\mathbb{F}_{q^2}^d$.
We call $V$ a \textbf{unitary space} and say it has a \textbf{unitary geometry}.
The subfield fixed by $\sigma$ is $\mathbb{F}_0 = \mathbb{F}_q \leq \mathbb{F}_{q^2}$.
We denote
\[
\mathbb{T}_q = \{ \alpha \in \mathbb{F}_{q^2} : \alpha^q \alpha = 1 \} \leq \mathbb{F}_{q^2}^\times,
\]
with $|\mathbb{T}_q| = q+1$.
If $\omega \in \mathbb{T}_q$ is a generator then we have the polarization identity
\[
\langle u, v \rangle = \frac{1}{q+1} \sum_{k=1}^{q+1} \omega^{-k} Q(u + \omega^k v),
\quad \text{for }u,v \in V.
\]
An orthonormal basis always exists, so (F1)--(F3) determine $\langle \cdot, \cdot \rangle$ uniquely up to isometric isomorphism.
The isometry group is denoted $I(V) = \operatorname{U}(V)$ since it is an example of a classical \textbf{unitary group}.
For the special case of $V = \mathbb{F}_{q^2}^d$ equipped with $(\cdot, \cdot)$ (the complex model) we also write $I(V) = \operatorname{U}(d,q)$.
Here, $\operatorname{U}(d,q)$ consists precisely of \textbf{unitaries}, i.e., matrices $U \in \mathbb{F}_{q^2}^{d\times d}$ that satisfy $U^* U = I$.
Notice that $\operatorname{U}(1,q) = \mathbb{T}_q$, which justifies the latter notation.
Here $\Delta(V) = \operatorname{U}(V) \times \mathbb{F}_{q^2}^\times$ consists of nonzero scalar multiples of unitary operators.

\begin{remark}
Many standard techniques in frame theory over $\mathbb{R}$ or $\mathbb{C}$ rely on the fact that if a matrix commutes with its conjugate transpose then it can be diagonalized by an isometric isomorphism.
The reader is warned that this theorem fails over finite fields, and alternative methods are needed in its place.
In Case~U, if $A$ is a square matrix that commutes with $A^*$, it does \emph{not} follow that $A$ is diagonalizable.
On the contrary, every $B \in \mathbb{F}_q^{n \times n}$ is similar to some $A \in \mathbb{F}_{q^2}^{n\times n}$ satisfying $A = A^*$~\cite{Gu18}.
\end{remark}

\section{Frame theory}
\label{sec: frame theory}

This section develops the basics of frame theory over arbitrary fields, with special emphasis placed on finite fields.
We prove generalizations of standard results from frame theory over $\mathbb{R}$ and $\mathbb{C}$~\cite{CKF13,W18}.
Over finite fields the main differences are as follows: the quadratic form $Q(v) = \langle v, v \rangle$ satisfies no condition akin to positivity, there are usually nonzero vectors $v \in V$ with $Q(v) = 0$, there are usually tight frames with frame constant zero, and there are two types of orthogonal geometries in Case~O.
Each of these differences has repercussions for the basic theory outlined below.

\subsection{Finite frames}

\begin{definition}
Throughout the paper we abuse notation by identifying a finite sequence $\Phi = \{ \varphi_j \}_{j\in [n]}$ in $V$ with its \textbf{synthesis operator} $\Phi \colon \mathbb{F}^n \to V$ given by
\[
\Phi \{ x_i \}_{i\in [n]} = \sum_{k=1}^n x_k \varphi_k.
\]
Its adjoint with respect to $(\cdot , \cdot)$ is called the \textbf{analysis operator} $\Phi^\dagger \colon V \to \mathbb{F}^n$ given by $\Phi^\dagger v = \{ \langle \varphi_i, v \rangle \}_{i\in [n]}$, and its \textbf{frame operator} is $\Phi \Phi^\dagger \colon V \to V$, where
\[
\Phi \Phi^\dagger v = \sum_{k=1}^n \langle \varphi_k, v \rangle \varphi_k.
\]
Multiplying in the other direction gives the \textbf{Gramian} $\Phi^\dagger \Phi \colon \mathbb{F}^n \to \mathbb{F}^n$, whose representation in the standard basis is known as the \textbf{Gram matrix} $\begin{bmatrix} \langle \varphi_i, \varphi_j \rangle \end{bmatrix}$ of $\Phi$.

We call $\Phi$ a \textbf{frame} if its vectors span $V$.
It is \textbf{nondegenerate} if its frame operator is invertible.
Since $d = \dim V$, a frame $\Phi$ with $n$ vectors is said to have \textbf{size} $d\times n$ (matching the size of a matrix for its synthesis operator).
A \textbf{tight frame} for $V$ is defined as a frame $\Phi$ that satisfies $\Phi \Phi^\dagger = c I$ for $c \in \mathbb{F}$.
Here $c$ is known as the \textbf{frame constant}, and we also call $\Phi$ a \textbf{$c$-tight frame}.
If $c=1$ the frame is called \textbf{Parseval}.
If $c = 0$ it is \textbf{totally isotropic}.
We emphasize that the vectors of a totally isotropic tight frame must span $V$, and it is not sufficient that $\Phi \Phi^\dagger = 0$.
\end{definition}

If $V = \mathbb{F}^d$ then we perform a further abuse by identifying $\Phi$ with the matrix of its synthesis operator, which is the $d\times n$ matrix whose $j$-th column is $\varphi_j$.
We also identify the analysis operator with its matrix, and if the form on $V$ is given by $\langle u, v \rangle = u^* M v$ then $\Phi^\dagger = \Phi^* M$.

\begin{example}
Totally isotropic (hence degenerate) frames exist, as shown by the simple example $\Phi = \left[ \begin{array}{ccc} 1 & 1 & 1 \end{array} \right]$ in the real model over $\mathbb{F}_3$.
\end{example}

\begin{example}
\label{ex:MB}
Let $q$ be an odd prime power, and consider $\mathbb{F}_q^2$ in the real model.
If $3 \in \mathbb{F}_q^{\times 2}$ then we can choose $\beta \in \mathbb{F}_q^\times$ satisfying $\beta^2 = 3$ to obtain the so-called Mercedes--Benz frame
\[
\Phi = 
\tfrac{1}{2}
\left[
\begin{array}{ccc}
0 & - \beta & \beta \\
2 & -1 & -1
\end{array}
\right].
\]
It is tight with constant $c=3/2$, and its Gram matrix is
\begin{equation}
\label{eq:MBG}
\Phi^* \Phi = 
\tfrac{1}{2}
\left[
\begin{array}{rrr}
2 & -1 & -1 \\
-1 & 2 & -1 \\
-1 & -1 & 2
\end{array}
\right].
\end{equation}
Conversely, if $3 \notin \mathbb{F}_q^{\times 2}$ then there does not exist a frame $\Phi \in \mathbb{F}_q^{2 \times 3}$ in the real model having Gram matrix~\eqref{eq:MBG}.
This is a consequence of Theorem~\ref{thm: factor O} below.

The complex model on $\mathbb{F}_{q^2}^2$ is more permissive, as a tight frame $\Phi \in \mathbb{F}_{q^2}^{2\times 3}$ with Gram matrix~\eqref{eq:MBG} exists provided $3 \nmid q$.
This follows from Theorem~\ref{thm: factor U} below, and an explicit example appears in Example~\ref{ex:unitaries}.
\end{example}

As in the real and complex settings, frames are characterized by an expansion property involving a \textbf{dual frame} ($\Psi$ below).
If $\Phi$ is a $c$-tight frame and $c \neq 0$, then $\Psi = \frac{1}{c} \Phi$ is a dual frame.
There does not appear to be such a canonical choice of dual for $0$-tight frames.

\begin{proposition}
A sequence $\Phi = \{ \varphi_j \}_{j\in [n]}$ in $V$ is a frame if and only if there is another sequence $\Psi = \{ \psi_j \}_{j\in [n]}$ such that $\Phi \Psi^\dagger = I$, that is,
\[
\sum_{j\in [n]} \langle \psi_j, v \rangle \varphi_j = v
\quad
\text{for every }
v \in V.
\]
\end{proposition}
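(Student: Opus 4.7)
The plan is to prove the two directions separately, treating the ``if'' direction as nearly immediate and building the dual $\Psi$ in the ``only if'' direction from a right inverse of the synthesis operator.

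For the easy direction, suppose $\Psi$ exists with $\Phi\Psi^\dagger = I$. Then the synthesis operator $\Phi\colon \mathbb{F}^n \to V$ has a right inverse, so it is surjective. In particular its image is $V$, which means the vectors $\varphi_1,\dotsc,\varphi_n$ span $V$, so $\Phi$ is a frame.

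For the converse, assume $\Phi$ is a frame, so $\Phi\colon \mathbb{F}^n \to V$ is surjective. By standard finite-dimensional linear algebra (split off a complement of $\operatorname{Ker}\Phi$ and invert) there exists a linear map $R\colon V \to \mathbb{F}^n$ with $\Phi R = I$. Now define $\Psi = \{\psi_j\}_{j\in [n]}$ by $\psi_j = R^\dagger e_j$, where $e_1,\dotsc,e_n$ is the standard basis of $\mathbb{F}^n$; equivalently, view $R^\dagger \colon \mathbb{F}^n \to V$ as the synthesis operator of the sequence $\Psi$. The adjoint operation is an involution, since from $\langle Au, v\rangle = \langle u, A^\dagger v\rangle$ and axiom (F2) one computes $\langle A^\dagger u, v\rangle = \langle u, Av\rangle$, giving $A^{\dagger\dagger} = A$. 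Applying this to $R$ yields $\Psi^\dagger = (R^\dagger)^\dagger = R$, and therefore $\Phi\Psi^\dagger = \Phi R = I$, as required.

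There is essentially no obstacle here: the only nontrivial input is the equivalence between ``one-to-one'' for $A^\dagger$ and ``onto'' for $A$, which was recorded in the preliminaries, together with involutivity of $\dagger$, which is a one-line consequence of (F2). In particular, no appeal to positivity or to diagonalizability is needed, so the proof goes through verbatim in Case O and Case U, and more generally under the standing assumptions (F1)--(F3).
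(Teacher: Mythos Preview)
Your proof is correct and follows essentially the same approach as the paper: both obtain a right inverse of the synthesis operator and then pass to its adjoint to produce $\Psi$. The only cosmetic difference is that the paper works in coordinates (writing $\Psi = (AM^{-1})^*$ where $A$ is a matrix right inverse of $\Phi$ and $M$ is the Gram matrix of the form on $V$), whereas you argue coordinate-free via the involutivity $(R^\dagger)^\dagger = R$; unwinding the matrix formula for $\dagger$ shows these constructions coincide.
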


\begin{proof}
We may assume $V = \mathbb{F}^d$ with form $\langle u, v \rangle = u^* M v$.
The reverse implication is clear.
Conversely, if $\Phi \in \mathbb{F}^{d\times n}$ is a frame then there exists $A \in \mathbb{F}^{n\times d}$ such that $\Phi A = I$.
Put $\Psi = (A M^{-1})^*$ to obtain $\Phi \Psi^\dagger = I$.
\end{proof}

\begin{proposition}
\label{prop: tight frame}
If $\Phi = \{ \varphi_j \}_{j\in [n]}$ is a frame for $V$, then the following are equivalent for any choice of $c \in \mathbb{F}$:
	\begin{itemize}
	\item[(i)]
	$\Phi$ is a $c$-tight frame,
	\item[(ii)]
	$(\Phi^\dagger \Phi)^2 = c ( \Phi^\dagger \Phi)$,
	\item[(iii)]
	$(\Phi^\dagger u, \Phi^\dagger v) = c \langle u, v \rangle$ for every $u,v \in V$.
	\end{itemize}
Moreover, if $V = \mathbb{F}^d$ and $\langle \cdot, \cdot \rangle = ( \cdot, \cdot)$, then (i)---(iii) are equivalent to:
	\begin{itemize}
	\item[(iv)]
	the rows $\psi_1,\dotsc,\psi_d \in \mathbb{F}^n$ of $\Phi$ satisfy $( \psi_i, \psi_j) = c \delta_{i,j}$ for every $i,j \in [d]$.
	\end{itemize}
(Here and throughout, $\delta_{ij}$ is the Kronecker delta function.)
\end{proposition}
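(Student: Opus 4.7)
My plan is to prove the equivalences in the cycle (i) $\Leftrightarrow$ (iii), (i) $\Leftrightarrow$ (ii), and then (i) $\Leftrightarrow$ (iv) in the matrix case. Before beginning, I would make the preliminary observation that in each of (i)--(iii), the scalar $c$ must lie in $\mathbb{F}_0$. For (i), taking the adjoint of $\Phi\Phi^\dagger = cI$ and using that the adjoint of scalar multiplication by $c$ is multiplication by $c^\sigma$ yields $c = c^\sigma$. For (iii), swapping arguments via (F2) gives $c^\sigma \langle v,u\rangle^\sigma = c\langle v,u\rangle^\sigma$, and any pair $u,v$ with $\langle u,v\rangle \neq 0$ (which exists by (F3)) forces $c \in \mathbb{F}_0$. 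For (ii), $\Phi^\dagger\Phi$ is self-adjoint so both sides of $(\Phi^\dagger\Phi)^2 = c\Phi^\dagger\Phi$ are self-adjoint, giving $c\Phi^\dagger\Phi = c^\sigma \Phi^\dagger\Phi$ and hence $c = c^\sigma$ (since $\Phi^\dagger\Phi \neq 0$). This small point matters because the form is $\sigma$-linear in the first variable, so scalars do not commute freely across the two slots unless fixed by $\sigma$.

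For (i) $\Leftrightarrow$ (iii), I would apply the defining adjoint relation twice to obtain the key identity $(\Phi^\dagger u, \Phi^\dagger v) = \langle \Phi\Phi^\dagger u, v\rangle$. Assuming $c \in \mathbb{F}_0$, condition (iii) then becomes $\langle \Phi\Phi^\dagger u - cu, v\rangle = 0$ for every $v \in V$, which by the nondegeneracy assumption (F3) is equivalent to $\Phi\Phi^\dagger = cI$.

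For (i) $\Rightarrow$ (ii), I would write $(\Phi^\dagger\Phi)^2 = \Phi^\dagger(\Phi\Phi^\dagger)\Phi = c\Phi^\dagger\Phi$ and be done. For the reverse, I would start from the identity $\Phi^\dagger(\Phi\Phi^\dagger - cI)\Phi = 0$ and cancel outer factors using the frame hypothesis. Specifically, since $\Phi$ is a frame, its synthesis operator is surjective, so $\operatorname{Ker}\Phi^\dagger = (\operatorname{Im}\Phi)^\perp = V^\perp = 0$, i.e., $\Phi^\dagger$ is injective; surjectivity of $\Phi$ also lets me drop the rightmost $\Phi$ and conclude $\Phi^\dagger(\Phi\Phi^\dagger - cI) = 0$, and then injectivity of $\Phi^\dagger$ forces $\Phi\Phi^\dagger = cI$. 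Finally, for (i) $\Leftrightarrow$ (iv) when $V = \mathbb{F}^d$ carries the standard form, I would use $\Phi^\dagger = \Phi^*$ to compute $(\Phi\Phi^*)_{ij} = \sum_k \Phi_{ik}\Phi_{jk}^\sigma = (\psi_j, \psi_i)$ with $\psi_i$ the $i$-th row; since $c \in \mathbb{F}_0$, the condition $\Phi\Phi^* = cI$ is symmetric in $i,j$ and reduces precisely to (iv).

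The step I expect to be the main obstacle is (ii) $\Rightarrow$ (i). Over $\mathbb{R}$ or $\mathbb{C}$ one might cancel factors using the positive semidefinite square root of $\Phi^\dagger\Phi$, but such a tool is unavailable over finite fields (and more broadly, the remark just before Section~3 warns that self-adjoint matrices need not be diagonalizable). The correct substitute is the general identity $\operatorname{Ker} A^\dagger = (\operatorname{Im} A)^\perp$ paired with the frame (spanning) hypothesis, which is exactly where the assumption that $\Phi$ spans $V$ enters the argument.
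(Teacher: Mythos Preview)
Your proposal is correct and tracks the paper's proof closely. The paper argues the cycle (i) $\Rightarrow$ (ii) $\Rightarrow$ (iii) $\Rightarrow$ (i), whereas you pair (i) $\Leftrightarrow$ (iii) and (i) $\Leftrightarrow$ (ii); the only substantive variation is that for (ii) $\Rightarrow$ (i) you cancel the outer factors $\Phi^\dagger(\cdot)\Phi$ using surjectivity of $\Phi$ and injectivity of $\Phi^\dagger$, while the paper instead proves (ii) $\Rightarrow$ (iii) by writing $u=\Phi x$, $v=\Phi y$ and computing $(\Phi^\dagger\Phi x,\Phi^\dagger\Phi y)=((\Phi^\dagger\Phi)^2 x,y)$---both arguments exploit exactly the same frame hypothesis in essentially the same way. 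Your explicit verification that $c\in\mathbb{F}_0$ is a nice touch the paper leaves implicit (it silently uses $c\langle u,v\rangle=\langle cu,v\rangle$ in the step (iii) $\Rightarrow$ (i)).
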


\begin{remark}
In Case O or Case U (Definition~\ref{def: Case OU}) we may add another equivalent condition to Proposition~\ref{prop: tight frame}:
\begin{itemize}
\item[(v)]
$(\Phi^\dagger v, \Phi^\dagger v) = c \cdot Q(v)$ for every $v \in V$.
\end{itemize}
Indeed, (v) is equivalent to (iii) since $\langle \cdot, \cdot \rangle$ and $(\cdot, \cdot)$ admit polarization identities.
\end{remark}

The techniques in the following argument are standard, but we include a proof for the sake of completeness.

\begin{proof}[Proof of Proposition~\ref{prop: tight frame}]
If (i) holds, then (ii) follows since
\[
(\Phi^\dagger \Phi)^2 
= \Phi^\dagger(\Phi \Phi^\dagger) \Phi 
= c \Phi^\dagger \Phi.
\]

To see that (ii) implies (iii), assume (ii) holds and choose $u,v \in V$ arbitrarily.
Since $\Phi$ is a frame, there exist $x,y \in \mathbb{F}^n$ such that $u = \Phi x$ and $v = \Phi y$.
Using the fact that $\Phi^\dagger \Phi$ is self-adjoint, we deduce that
\[
( \Phi^\dagger u, \Phi^\dagger v)
= (\Phi^\dagger \Phi x, \Phi^\dagger \Phi y)
= ( (\Phi^\dagger \Phi)^2 x, y )
= c (\Phi^\dagger \Phi x, y)
= c \langle \Phi x, \Phi y \rangle
= c \langle u, v \rangle.
\]

Next, suppose (iii) holds.
For any choice of $u,v \in V$ we have
\[
0 
= (\Phi^\dagger u, \Phi^\dagger v) - c \langle u, v \rangle
= \langle \Phi \Phi^\dagger u, v \rangle - \langle cu, v \rangle
= \langle (\Phi \Phi^\dagger - c I)u, v \rangle.
\]
Since $\langle \cdot, \cdot \rangle$ satisfies (F3), it follows that $(\Phi \Phi^\dagger - cI)u = 0$ for every $u \in V$, that is, $\Phi \Phi^\dagger = c I$.
By assumption, $\Phi$ is a frame, and so (i) follows.

Finally, assume that $V = \mathbb{F}^n$ and $\langle \cdot, \cdot \rangle = ( \cdot, \cdot )$.
Recall that in this case, we identify the synthesis operator $\Phi$ with the matrix $\Phi = \begin{bmatrix} \psi_1 \\ \vdots \\ \psi_d \end{bmatrix}$, and we identify the analysis operator $\Phi^\dagger$ with the matrix $\Phi^* = \begin{bmatrix} \psi_1^* & \cdots & \psi_d^* \end{bmatrix}$.
Then $\Phi \Phi^\dagger$ is the matrix $\Phi\Phi^* = \begin{bmatrix} \psi_i^* \psi_j \end{bmatrix}_{i,j \in [d]} = \begin{bmatrix} ( \psi_i, \psi_j) \end{bmatrix}_{i,j \in [d]}$.
Since $\Phi$ is assumed to be a frame, (i) holds if and only if $cI = \Phi \Phi^* = \begin{bmatrix} ( \psi_i, \psi_j) \end{bmatrix}_{i,j \in [d]}$, if and only if (iv) holds.
\end{proof}

\begin{remark}
In Proposition~\ref{prop: tight frame}, the hypothesis that $\Phi$ is a frame for $V$ cannot be removed.
This is familiar from the real and complex settings, where it is possible that (ii) holds and (i) fails when $\Phi$ is not a frame.
In the finite field setting, when $c = 0$ it may also happen that (iii) holds and (i) fails when $\Phi \neq 0$ is not a frame.
As an example of this, take $V = \mathbb{F}_3^3$ in the real model, and define $\Phi = \begin{bmatrix} 1 & 1 & 1 \\ 1 & 1 & 1 \end{bmatrix}$.
Then $\Phi \Phi^\dagger = 0$, and it follows easily that (ii) and (iii) hold with $c=0$.
However, (i) fails since $\Phi$ is not a frame.
\end{remark}

\begin{example}
\label{ex:unitaries}
Assume Case U (Definition~\ref{def: Case OU}).
Let $U = \begin{bmatrix} u_{ij} \end{bmatrix}_{i,j \in [n]}$ be a unitary matrix, and choose $d$ rows labeled by $J \subseteq [n]$.
Then Proposition~\ref{prop: tight frame}(iv) implies that the submatrix $\Phi = \begin{bmatrix} u_{ij} \end{bmatrix}_{i \in J, j \in [n]}$ is a $d \times n$ Parseval frame.
This gives a large supply of tight frames, and in fact every Parseval frame arises this way, as a consequence of Proposition~\ref{prop: Naimark U} below.

As a concrete example, consider the complex model over $\mathbb{F}_{5^2}$.
Let $\alpha \in \mathbb{F}_{5^2}^\times$ be a primitive element, and put $\omega = \alpha^8 \in \mathbb{T}_5$.
Then $\omega^3 = 1$, and $U = \alpha \begin{bmatrix} \omega^{ij} \end{bmatrix}_{i,j \in \mathbb{Z}/3\mathbb{Z}}$ is unitary.
Rescaling the rows labeled by $J = \{ 1, 2 \}$ gives the tight frame
\[
\Phi =
\alpha^3
\left[
\begin{array}{ccc}
1 & \omega & \omega^2 \\
1 & \omega^2 & \omega
\end{array}
\right]
\]
with Gram matrix~\eqref{eq:MBG}.
This is the Mercedes--Benz frame over $\mathbb{F}_{5^2}$, cf.\ Example~\ref{ex:MB}.
Theorem~\ref{thm: factor O} below implies that there does not exist any unitary $W \in \operatorname{U}(2,5)$ for which $W \Phi$ has entries in the subfield $\mathbb{F}_5$.
In other words we cannot ``rotate'' $\Phi$ to obtain a version of the Mercedes--Benz frame in the real model over $\mathbb{F}_5$.
\end{example}

\begin{corollary}
\label{cor: iso bound}
A frame $\Phi = \{ \varphi_j \}_{j\in [n]}$ for $V$ is totally isotropic if and only if $\operatorname{Im} \Phi^\dagger$ is a totally isotropic subspace of $\mathbb{F}^n$.
Hence, $V$ admits a $0$-tight frame of $n$ vectors only if $n \geq 2d$.
\end{corollary}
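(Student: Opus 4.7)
The plan is to deduce both claims directly from Proposition~\ref{prop: tight frame} together with the standard bound $\dim W \leq \tfrac12 \dim V$ on totally isotropic subspaces from Section~\ref{sec: prelim}. There is no real obstacle here; the corollary is essentially a repackaging of the $c=0$ case of that proposition.

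For the equivalence, I would start from the observation that, because $\Phi$ is assumed to be a frame for $V$, Proposition~\ref{prop: tight frame} applies. Specializing condition (iii) to $c = 0$ says that $\Phi$ is a $0$-tight (hence totally isotropic) frame if and only if $(\Phi^\dagger u, \Phi^\dagger v) = 0$ for all $u,v \in V$. But that is exactly the statement that the subspace $\operatorname{Im} \Phi^\dagger \leq \mathbb{F}^n$ is totally isotropic with respect to $(\cdot,\cdot)$. One short remark is worth making: the frame hypothesis is what rules out the pathology flagged in the earlier remark after Proposition~\ref{prop: tight frame}, where condition (iii) with $c = 0$ could hold for a nonzero non-frame $\Phi$ with $\Phi\Phi^\dagger = 0$ but $\Phi \neq 0$.

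For the dimension bound, suppose $V$ admits a $0$-tight frame $\Phi$ of $n$ vectors. Since its vectors span $V$, the synthesis operator $\Phi \colon \mathbb{F}^n \to V$ is onto, and by the general fact on adjoints recorded in Section~\ref{sec: prelim} the analysis operator $\Phi^\dagger \colon V \to \mathbb{F}^n$ is therefore one-to-one. In particular $\dim \operatorname{Im} \Phi^\dagger = d$. By the first part of the corollary, $\operatorname{Im} \Phi^\dagger$ is a totally isotropic subspace of $\mathbb{F}^n$, so the general inequality for totally isotropic subspaces yields $d \leq \tfrac{1}{2}\dim \mathbb{F}^n = \tfrac{n}{2}$, which rearranges to $n \geq 2d$.
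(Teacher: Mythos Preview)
Your proof is correct and matches the paper's intended argument: the corollary is stated without proof, positioned immediately after Proposition~\ref{prop: tight frame}, and your deduction via condition~(iii) with $c=0$ together with the totally isotropic dimension bound from Section~\ref{sec: prelim} is exactly the route implied by that placement.
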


The identity $(\Phi^\dagger u, \Phi^\dagger v) = \langle \Phi \Phi^\dagger u, v \rangle$ easily implies the following parallel characterization of nondegenerate frames.

\begin{proposition}
\label{prop: nondegenerate frame}
A frame $\Phi = \{ \varphi_j \}_{j\in [n]}$ for $V$ is nondegenerate if and only if $\operatorname{Im} \Phi^\dagger$ is a nondegenerate subspace of $\mathbb{F}^n$.
\end{proposition}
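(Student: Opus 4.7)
The plan is to use the identity $(\Phi^\dagger u, \Phi^\dagger v) = \langle \Phi \Phi^\dagger u, v \rangle$ to turn the nondegeneracy of the frame operator into a statement about the radical of $\operatorname{Im}\Phi^\dagger$. Since $\Phi$ is a frame means its synthesis operator is onto, the preliminary discussion of adjoints gives that $\Phi^\dagger \colon V \to \mathbb{F}^n$ is one-to-one. Meanwhile, $\Phi$ is nondegenerate by definition when $\Phi\Phi^\dagger$ is invertible, which (since $\Phi\Phi^\dagger$ is an endomorphism of the finite-dimensional space $V$) is equivalent to $\ker \Phi\Phi^\dagger = \{0\}$.

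The key calculation is then to identify this kernel. For any $u \in V$, the vector $\Phi\Phi^\dagger u$ is zero if and only if $\langle \Phi\Phi^\dagger u, v\rangle = 0$ for every $v \in V$, by nondegeneracy of $\langle \cdot,\cdot\rangle$ (axiom (F3)). Applying the stated identity, this happens if and only if $(\Phi^\dagger u, \Phi^\dagger v) = 0$ for every $v \in V$, i.e., $\Phi^\dagger u \in (\operatorname{Im}\Phi^\dagger)^\perp$. Since $\Phi^\dagger u \in \operatorname{Im}\Phi^\dagger$ automatically, this is the same as saying $\Phi^\dagger u \in \operatorname{Im}\Phi^\dagger \cap (\operatorname{Im}\Phi^\dagger)^\perp$.

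To finish, I would combine these observations. Because $\Phi^\dagger$ is injective, $\ker \Phi\Phi^\dagger = \{0\}$ if and only if the only $u$ with $\Phi^\dagger u \in \operatorname{Im}\Phi^\dagger \cap (\operatorname{Im}\Phi^\dagger)^\perp$ is $u = 0$; equivalently (again by injectivity of $\Phi^\dagger$), $\operatorname{Im}\Phi^\dagger \cap (\operatorname{Im}\Phi^\dagger)^\perp = \{0\}$. This is exactly the condition that $\operatorname{Im}\Phi^\dagger$ be a nondegenerate subspace of $\mathbb{F}^n$, completing both directions at once.

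There is no real obstacle here beyond bookkeeping; the proof is a one-line unwinding once the identity and the injectivity of $\Phi^\dagger$ are in hand. The only point that requires a small amount of care is to pass cleanly between ``$\Phi^\dagger u$ lies in the radical of $\operatorname{Im}\Phi^\dagger$'' and ``$u = 0$,'' which uses injectivity of $\Phi^\dagger$ in exactly the same way as the parallel proof of Corollary~\ref{cor: iso bound}.
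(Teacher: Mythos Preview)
Your proof is correct and follows exactly the approach the paper indicates: the paper simply states that the identity $(\Phi^\dagger u, \Phi^\dagger v) = \langle \Phi \Phi^\dagger u, v \rangle$ ``easily implies'' the proposition, and you have spelled out precisely those easy details. There is nothing to add.
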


\subsection{Frames from Gram matrices}

\subsubsection{Uniqueness}

\begin{proposition}
\label{prop: ker Gram}
If $\Phi$ is a frame, then $\operatorname{Ker} \Phi^\dagger \Phi = \operatorname{Ker} \Phi$ and $\operatorname{Im} \Phi^\dagger \Phi = \operatorname{Im} \Phi^\dagger$.
\end{proposition}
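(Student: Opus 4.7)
The plan is to leverage the single structural fact that $\Phi$ being a frame means its synthesis operator is surjective onto $V$, so by the identity $\operatorname{Ker} A^\dagger = (\operatorname{Im} A)^\perp$ recorded earlier in the paper, the analysis operator $\Phi^\dagger$ is injective. This is the finite-field substitute for the usual positivity argument (``$\Phi^*\Phi x = 0 \implies \|\Phi x\|^2 = 0 \implies \Phi x = 0$''), which is unavailable here because $Q$ need not be positive and can have nonzero isotropic vectors.

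For the kernel claim, I would first note the trivial inclusion $\operatorname{Ker} \Phi \subseteq \operatorname{Ker} \Phi^\dagger \Phi$. For the reverse inclusion, suppose $\Phi^\dagger \Phi x = 0$. Then $\Phi x \in \operatorname{Ker} \Phi^\dagger = (\operatorname{Im} \Phi)^\perp = V^\perp = \{0\}$, where the last equality uses that $\Phi$ is a frame (so $\operatorname{Im} \Phi = V$) together with condition (F3). Hence $\Phi x = 0$, giving $\operatorname{Ker} \Phi^\dagger \Phi = \operatorname{Ker} \Phi$.

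For the image claim, the inclusion $\operatorname{Im} \Phi^\dagger \Phi \subseteq \operatorname{Im} \Phi^\dagger$ is immediate. To get equality, I would compare dimensions via the rank–nullity theorem. On the one hand,
\[
\dim \operatorname{Im} \Phi^\dagger \Phi = n - \dim \operatorname{Ker} \Phi^\dagger \Phi = n - \dim \operatorname{Ker} \Phi = \dim \operatorname{Im} \Phi = d,
\]
using the kernel identity just established and the fact that $\Phi$ is surjective. On the other hand, since $\operatorname{Ker} \Phi^\dagger = \{0\}$ as above,
\[
\dim \operatorname{Im} \Phi^\dagger = d - \dim \operatorname{Ker} \Phi^\dagger = d.
\]
Combined with the inclusion, this forces $\operatorname{Im} \Phi^\dagger \Phi = \operatorname{Im} \Phi^\dagger$.

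The only step requiring any care is recognizing that the classical Hilbert-space route through inner-product positivity is closed off. The work-around is cheap once one notices that ``$\Phi$ is a frame'' means exactly ``$\Phi$ is onto,'' which via the adjoint duality already cited in Section~\ref{sec: prelim} makes $\Phi^\dagger$ injective; everything else is bookkeeping with kernels, images, and rank–nullity, each of which is valid over an arbitrary field.
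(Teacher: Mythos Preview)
Your proof is correct and the kernel argument is essentially identical to the paper's: both exploit that $\Phi$ being onto forces $\Phi^\dagger$ to be injective via (F3). For the image equality the paper instead takes orthogonal complements of the kernel identity (using $(\operatorname{Ker} A)^\perp = \operatorname{Im} A^\dagger$ and the self-adjointness of $\Phi^\dagger\Phi$) rather than your dimension count, but both are one-line bookkeeping once the kernel claim is in hand.
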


\begin{proof}
We show $\operatorname{Ker} \Phi^\dagger \Phi = \operatorname{Ker} \Phi$, and $\operatorname{Im}  \Phi^\dagger \Phi = \operatorname{Im} \Phi^\dagger$ follows by taking orthogonal complements.
Given $x \in \mathbb{F}^n$, we have $\Phi^\dagger \Phi x = 0$ if and only if $\langle \Phi x, \Phi y \rangle = 0$ for every $y \in \mathbb{F}^n$.
Since $\Phi$ is a frame, this happens if and only if $\Phi x = 0$.
\end{proof}

\begin{proposition}
\label{prop: unitary Gram}
Let $\Phi$ and $\Psi$ be frames for $V$ with the same number of vectors.
Given $c \in \mathbb{F}^\times$, we have $\Psi^\dagger \Psi = c \Phi^\dagger \Phi$ if and only if $\Psi = A \Phi$ for unique $A \in \Delta(V)$ satisfying $A^\dagger A = c I$.
\end{proposition}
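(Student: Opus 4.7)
The plan is to handle the easy direction first and then construct $A$ by hand.

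The reverse direction is immediate: if $\Psi = A\Phi$ with $A^\dagger A = cI$, then
\[
\Psi^\dagger \Psi = \Phi^\dagger A^\dagger A \Phi = c \Phi^\dagger \Phi.
\]

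For the forward direction, I want to \emph{define} $A \colon V \to V$ by $A \varphi_j = \psi_j$, extended linearly. Since $\Phi$ is a frame, its synthesis operator is onto, so every $v \in V$ has the form $v = \Phi x$ for some $x \in \mathbb{F}^n$, and I set $Av = \Psi x$. The well-definedness is the first thing to check, and this is where the hypothesis $c \in \mathbb{F}^\times$ enters: I need $\operatorname{Ker} \Phi \subseteq \operatorname{Ker} \Psi$. By Proposition~\ref{prop: ker Gram}, $\operatorname{Ker} \Phi = \operatorname{Ker} \Phi^\dagger \Phi$ and $\operatorname{Ker} \Psi = \operatorname{Ker} \Psi^\dagger \Psi$, and the relation $\Psi^\dagger \Psi = c \Phi^\dagger \Phi$ with $c \neq 0$ gives $\operatorname{Ker} \Phi^\dagger \Phi = \operatorname{Ker} \Psi^\dagger \Psi$, so in fact the two kernels are equal and $A$ is well-defined.

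Next I verify $A^\dagger A = cI$. For $u, v \in V$, write $u = \Phi x$ and $v = \Phi y$; using the definition of adjoint between $\mathbb{F}^n$ (with the standard form $(\cdot,\cdot)$) and $V$, compute
\[
\langle u, A^\dagger A v \rangle = \langle Au, Av \rangle = \langle \Psi x, \Psi y \rangle = (x, \Psi^\dagger \Psi y) = c\,(x, \Phi^\dagger \Phi y) = c \langle \Phi x, \Phi y \rangle = \langle u, c v \rangle.
\]
By (F3) this forces $A^\dagger A v = c v$ for every $v$. That $A \in \Delta(V)$ (i.e.\ $A \in \operatorname{GL}(V)$) then follows automatically: if $Au = 0$ then $0 = A^\dagger A u = cu$, so $u = 0$ because $c \neq 0$; since $\dim V < \infty$, $A$ is invertible.

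Uniqueness of $A$ is then the easiest point: if $A_1 \Phi = A_2 \Phi$, then $(A_1 - A_2)\Phi = 0$, and surjectivity of $\Phi$ gives $A_1 = A_2$. The only place where any real care is needed is in establishing well-definedness of $A$, which is exactly where the hypothesis $c \neq 0$ is essential — if $c$ were allowed to be zero, the Gram matrix identity would no longer force the kernels of $\Phi$ and $\Psi$ to match, and the analogous statement fails for $0$-tight frames.
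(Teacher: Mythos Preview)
Your proof is correct and follows the same route as the paper's: define $A$ by $A(\Phi x) = \Psi x$, check well-definedness, verify $\langle Au, Av\rangle = c\langle u,v\rangle$, and note uniqueness from surjectivity of $\Phi$. The paper's argument is terser---it simply asserts well-definedness and the identity $\langle Au,Av\rangle = c\langle u,v\rangle$ without the explicit kernel comparison via Proposition~\ref{prop: ker Gram} or the separate invertibility check---so your version is a faithful expansion rather than a different approach.
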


\begin{proof}
The reverse implication is trivial.
Conversely, if $\Psi^\dagger \Psi = c \Phi^\dagger \Phi$ then the expression $A( \sum_{j \in [n]} x_j \varphi_j ) = \sum_{j\in [n]} x_j \psi_j$ gives a well-defined linear operator $A \colon V \to V$, and it is clear that $\langle A u , A v \rangle = c \langle u, v \rangle$ for every $u,v \in V$.
The choice of $A$ is unique since $\Phi$ and $\Psi$ are frames.
\end{proof}

\subsubsection{Existence}

The following was observed in Lemma~2.3 of~\cite{Gu18}.
Our proof below is more direct and gives an explicit algorithm.

\begin{theorem}
\label{thm: factor U}
Suppose we are in Case $U$ (Definition~\ref{def: Case OU}).
Then $G \in \mathbb{F}_{q^2}^{n\times n}$ is the Gram matrix of a frame for $V$ if and only if $G = G^*$ and $\operatorname{rank} G = \dim V$.
\end{theorem}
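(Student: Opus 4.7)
The plan is to handle the two directions separately. For the forward direction, after fixing a basis of $V$ in which the form has matrix $M$, the paper's formula $A^\dagger = N^{-1} A^* M$ (applied with $N = I$ for $\mathbb{F}^n$) gives $\Phi^\dagger = \Phi^* M$ and therefore $G = \Phi^\dagger \Phi = \Phi^* M \Phi$. Hermiticity $G = G^*$ follows from $M = M^*$, and Proposition~\ref{prop: ker Gram} yields $\operatorname{rank} G = \operatorname{rank} \Phi^\dagger \Phi = \operatorname{rank} \Phi = \dim V$ since $\Phi$ is a frame. This direction is essentially formal.

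The substantive half is the converse. First I would reduce to $V = \mathbb{F}_{q^2}^d$ with the standard form $(\cdot, \cdot)$; this is legitimate in Case~U because every unitary space of dimension $d$ is isometrically isomorphic to the complex model. The problem then becomes: given a Hermitian $G \in \mathbb{F}_{q^2}^{n \times n}$ of rank $d$, produce $\Phi \in \mathbb{F}_{q^2}^{d \times n}$ of rank $d$ such that $\Phi^* \Phi = G$. My strategy is to establish a $*$-congruent factorization
\[
G = P^* \begin{pmatrix} I_d & 0 \\ 0 & 0 \end{pmatrix} P
\]
for some $P \in \operatorname{GL}_n(\mathbb{F}_{q^2})$. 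Once this is in hand, $\Phi := \begin{pmatrix} I_d & 0 \end{pmatrix} P$ clearly satisfies $\Phi^* \Phi = G$, and $\operatorname{rank} \Phi = d$ because $\begin{pmatrix} I_d & 0 \end{pmatrix}$ has rank $d$ and $P$ is invertible.

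To produce the factorization, I would run a Gram--Schmidt-style induction on the (possibly degenerate) Hermitian form $\langle x, y \rangle_G := x^* G y$ on $\mathbb{F}_{q^2}^n$. First I would peel off the radical $\operatorname{Ker} G$ of dimension $n - d$ by choosing any complementary subspace $W$; the form restricted to $W$ is nondegenerate and Hermitian of dimension $d$. Inside $W$, iterate: at each stage, locate an anisotropic vector $v$, rescale it so that $\langle v, v \rangle_G = 1$ (using that the norm map $\mathbb{F}_{q^2}^\times \to \mathbb{F}_q^\times$ is surjective, which follows from $|\mathbb{F}_{q^2}^\times| = (q+1)(q-1)$), and then restrict to $v^\perp$ within $W$ to continue. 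Tracking these basis changes assembles the matrix $P$.

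The main obstacle I anticipate is isolating an anisotropic vector at each step, since a priori many candidate vectors may be isotropic. My remedy is the Case~U polarization identity: if $v$ is isotropic but the ambient form on $W$ is nonzero, nondegeneracy produces $v'$ with $\langle v, v' \rangle_G \neq 0$, and polarization then expresses this pairing as a sum of $Q$-values of vectors $v + \omega^k v'$ with $\omega \in \mathbb{T}_q$, forcing at least one such vector to be anisotropic. This is the step where the desired ``explicit algorithm'' emerges, distinguishing the argument from the more abstract one in~\cite{Gu18}.
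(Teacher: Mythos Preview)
Your proposal is correct, and the forward direction matches the paper's. For the converse, however, you take a genuinely different route. The paper does \emph{not} diagonalize the Hermitian form $x^*Gy$ on $\mathbb{F}_{q^2}^n$. Instead it first lifts: it writes down, by explicit recursive formulas, an upper-triangular matrix $A\in\mathbb{F}_{q^2}^{n\times n}$ whose columns have the correct off-diagonal inner products $(a_i,a_j)=G_{ij}$ for $i\neq j$, then appends a diagonal block $B\in\mathbb{F}_{q^2}^{n\times n}$ (solving $b_{ii}^{q+1}=G_{ii}-(a_i,a_i)$ via norm surjectivity) so that $\Psi=\left[\begin{smallmatrix}A\\B\end{smallmatrix}\right]\in\mathbb{F}_{q^2}^{2n\times n}$ satisfies $\Psi^*\Psi=G$. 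Only afterwards does it cut back down to dimension $d$ by passing to a complement of the radical of $\operatorname{Im}\Psi$ and invoking an isometric isomorphism with $\mathbb{F}_{q^2}^d$.

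What each buys: the paper's construction of $\Psi$ involves no search or case analysis---every entry of $A$ and $B$ is given by a closed formula---which is the sense in which it advertises an ``explicit algorithm''; the price is the auxiliary $2n$-dimensional space and a final quotient step that, in effect, still appeals to the classification of Hermitian forms. Your Gram--Schmidt argument stays inside $\mathbb{F}_{q^2}^n$ throughout and delivers the $d\times n$ factor directly; the price is the anisotropic-vector search at each of the $d$ steps, which your polarization trick bounds by at most $q+1$ candidates per step. Both are valid and comparably elementary; yours is closer to the textbook proof that Hermitian forms over $\mathbb{F}_{q^2}$ are classified by rank, while the paper's is a more hands-on Cholesky-style factorization.
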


Notice that there is no condition akin to positive semidefiniteness in Theorem~\ref{thm: factor U}.
This is a significant departure from the real and complex settings.

\begin{proof}
We may assume $V = \mathbb{F}_{q^2}^d$ in the complex model.
The forward implication is clear from Proposition~\ref{prop: ker Gram}.
For the converse, we first construct $\Psi = \begin{bmatrix} \psi_1 & \cdots & \psi_n \end{bmatrix} \in \mathbb{F}_{q^2}^{2n \times n}$ such that $\Psi^* \Psi = G$.
It will take the form $\Psi = \begin{bmatrix} A \\ B \end{bmatrix}$, with $A \in \mathbb{F}_{q^2}^{n\times n}$ upper triangular and $B \in \mathbb{F}_{q^2}^{n\times n}$ a diagonal matrix.
Later, we will transform $\Psi$ to produce a frame $\Phi \in \mathbb{F}_{q^2}^{d \times n}$.

We first construct $A = \begin{bmatrix} a_1 & \cdots & a_n \end{bmatrix}$ in such a way that $(a_i, a_j) = G_{ij}$ for every $i \neq j$.
We define the entries of $A$ recursively, going down the columns from left to right.
To begin, set $a_1$ to be the first column of the $n\times n$ identity matrix.
Now suppose we have constructed the first $j-1$ columns to have the desired inner products, and that the matrix built so far is upper triangular with $1$s on the diagonal. 
We must define the entries of the next column $a_j = \begin{bmatrix} a_{kj} \end{bmatrix}_{k \in [n]}$ so that for any $i < j$,
\[
G_{ij} = (a_i, a_j) = \sum_{k=1}^n a_{ki}^q a_{kj} = a_{ij} + \sum_{k=1}^{i-1} a_{ki}^q a_{kj}.
\]
To accomplish this, we first set $a_{1j} = G_{1j}$.
After the first $i-1$ entries of $a_j$ are determined and $i < j$, we define $a_{ij} = G_{ij} - \sum_{k=1}^{i-1} a_{ki}^q a_{kj}$.
Finally, we set $a_{jj} = 1$ and $a_{ij} = 0$ for $i > j$.
Continuing in this way, we eventually obtain $A \in \mathbb{F}_{q^2}^{n\times n}$ with the desired structure.

Having built $A$, we next define the diagonal matrix $B = \begin{bmatrix} b_1 & \cdots & b_n \end{bmatrix} \in \mathbb{F}_{q^2}^{n\times n}$ with columns $b_j = \begin{bmatrix} b_{ij} \end{bmatrix}_{i \in [n]}$ in such a way that the vectors $\psi_j = \begin{bmatrix} a_j \\ b_j \end{bmatrix}$ satisfy
\[ G_{ij} = ( \psi_i, \psi_j) = (a_i, a_j) + (b_i, b_j) = \begin{cases} (a_i, a_j), & \text{if } i \neq j; \\ (a_i, a_i) + b_{ii}^{q+1}, & \text{if } i = j. \end{cases} \]
It suffices to choose $b_{ii}$ such that $b_{ii}^{q+1} = G_{ii} - (a_i, a_i)$, and this is possible since $G_{ii} - (a_i, a_i) \in \mathbb{F}_q$.
Make any valid choice to complete the construction of $B$, hence of $\Psi \in \mathbb{F}_{q^2}^{2n \times n}$ satisfying $\Psi^* \Psi = G$.

It remains to transform $\Psi \in \mathbb{F}_{q^2}^{2n \times n}$ into $\Phi \in \mathbb{F}_{q^2}^{d\times n}$.
Set $W = \operatorname{Im} \Psi \leq \mathbb{F}_{q^2}^n$, and consider its \textbf{radical}
\[
\operatorname{rad} W := \{ x \in W : \langle x, y \rangle = 0 \text{ for every }y \in W\} \leq W.
\]
Choose any algebraic complement $U \leq W$ for $\operatorname{rad} W$, that is, $W = U \oplus \operatorname{rad} W$ as vector spaces.
Then $U \leq \mathbb{F}_{q^2}^n$ is nondegenerate.
For each $j$, write $\psi_j = x_j + y_j$ with $x_j \in U$ and $y_j \in \operatorname{rad} W$.
Then $x_1,\dotsc,x_n$ span $U$ and satisfy $(x_i, x_j) = (\psi_i, \psi_j) = G_{ij}$ for every $i,j$.

Finally, we construct the matrix $\Phi$.
Set $m = \dim U$.
Since $U$ is nondegenerate, there is an isometric isomorphism $T \colon U \to \mathbb{F}_{q^2}^m$.
Define $\varphi_j = T(x_j)$ for every $j$.
Then $\Phi = \begin{bmatrix} \varphi_1 & \cdots & \varphi_n \end{bmatrix} \in \mathbb{F}_{q^2}^{m\times n}$ satisfies $\operatorname{rank} \Phi = m$ and $\Phi^* \Phi = G$.
In particular, $m = \operatorname{rank} G = d$.
This completes the proof.
\end{proof}

\begin{definition}
Given a square matrix $M  = \begin{bmatrix} M_{ij} \end{bmatrix}_{i,j\in [n]} = \begin{bmatrix} m_1 & \cdots & m_n \end{bmatrix}$, select columns $\{ m_j \}_{j\in I}$ that form a basis for $\operatorname{Im} M$.
We refer to $M_b = \begin{bmatrix} M_{ij} \end{bmatrix}_{i,j \in I}$ as a \textbf{basic submatrix} of $M$.
\end{definition}

\begin{theorem}
\label{thm: factor O}
Assume Case O (Definition~\ref{def: Case OU}).
Choose a matrix $G \in \mathbb{F}_q^{n\times n}$ and basic submatrix $G_b$.
Then, $G$ is the Gram matrix of a frame for $V$ if and only if the following hold:
	\begin{itemize}
	\item[(i)]
	$G = G^\top$,
	\item[(ii)]
	$\operatorname{rank} G = \dim V$,
	\item[(iii)]
	$(\det G_b)\mathbb{F}_q^{\times 2} = \operatorname{discr}(V)$.
	\end{itemize}
Consequently, every symmetric matrix $G \in \mathbb{F}_q^{n\times n}$ occurs as the Gram matrix of a frame for a quadratic space over $\mathbb{F}_q$, namely one in dimension $\operatorname{rank} G$ whose discriminant matches the determinant of a basic submatrix of $G$.
\end{theorem}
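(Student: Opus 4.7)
The plan is to argue each direction by leaning on the classification of nondegenerate symmetric forms over $\mathbb{F}_q$ recalled in Section~\ref{sec: prelim}, where dimension and discriminant together form a complete invariant. Conditions (i) and (ii) parallel those in the unitary analog Theorem~\ref{thm: factor U}, while (iii) is the new ingredient that selects between the two possible quadratic geometries of a given dimension.

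For necessity, suppose $\Phi = \{\varphi_j\}_{j\in [n]}$ is a frame for $V$ with Gram matrix $G = \Phi^\dagger \Phi$. Condition (i) follows from (F2) with $\sigma = 1$, and (ii) follows from Proposition~\ref{prop: ker Gram} together with the injectivity of $\Phi^\dagger$ (as the adjoint of the surjection $\Phi$). The $j$-th column of $G$ equals $\Phi^\dagger \varphi_j$, so by injectivity of $\Phi^\dagger$ a subset of columns is linearly independent if and only if the corresponding $\varphi_j$'s are. Hence for any index set $I \subseteq [n]$ that selects a basic submatrix, $\{\varphi_i\}_{i \in I}$ is a basis of $V$ whose Gram matrix is $G_b = \bigl[\langle \varphi_i, \varphi_j \rangle\bigr]_{i,j \in I}$, and (iii) follows from the definition of discriminant.

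For sufficiency, let $G$ satisfy (i)--(iii) and fix a basic submatrix $G_b$ indexed by $I$, so $G_b \in \mathbb{F}_q^{d\times d}$ is invertible. By (iii) and the classification, there exists a basis $\{\varphi_i\}_{i \in I}$ of $V$ realizing $G_b$ as its Gram matrix. Every column of $G$ lies in the span of the $I$-columns, so for each $j \in [n]$ there is a unique vector $c_j = (c_{jk})_{k \in I}$ with $G_{ij} = \sum_{k \in I} c_{jk} G_{ik}$ for all $i \in [n]$; explicitly $c_j = G_b^{-1} (G_{kj})_{k \in I}$, and $c_j$ reduces to the standard basis vector indexed by $j$ when $j \in I$. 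Define $\varphi_j = \sum_{k \in I} c_{jk} \varphi_k$ for all $j \in [n]$. Then $\Phi = \{\varphi_j\}_{j \in [n]}$ contains the basis $\{\varphi_i\}_{i \in I}$ of $V$ and so is a frame. For the Gram identity $\langle \varphi_i, \varphi_j \rangle = G_{ij}$, expand bilinearly and use that $\{\varphi_i\}_{i \in I}$ realizes $G_b$: the case $i \in I$ follows immediately from the defining relation for $c_j$, and the case $i \notin I$ follows by symmetry $G = G^\top$, which forces $c_i$ to encode row $i$ of $G$ as the same linear combination of the $I$-rows that it encodes for the columns.

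The ``Consequently'' clause then follows by choosing $V$ to be any quadratic space over $\mathbb{F}_q$ with dimension $\operatorname{rank} G$ and discriminant $(\det G_b)\mathbb{F}_q^{\times 2}$, whose existence is guaranteed by the classification. The main obstacle is the sufficiency step where one realizes $G_b$ as a Gram matrix of a basis of $V$; this is precisely where (iii) must be invoked, and it marks the essential difference from Case U. There, the surjectivity of the norm map $\alpha \mapsto \alpha^{q+1}$ on $\mathbb{F}_{q^2}^\times$ enabled an explicit recursive construction of the factorization with arbitrary diagonal entries, whereas here the squares form a proper subgroup of $\mathbb{F}_q^\times$ and the discriminant constraint must be respected upfront through the classification.
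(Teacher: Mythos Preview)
Your proof is correct, and your converse argument takes a genuinely different route from the paper's. The paper mirrors its unitary-case proof (Theorem~\ref{thm: factor U}): it first builds an explicit $\Psi \in \mathbb{F}_q^{3n \times n}$ with $\Psi^\top \Psi = G$ by an upper-triangular recursion for the off-diagonal entries followed by a block-diagonal correction for the diagonal (using that every element of $\mathbb{F}_q$ is represented by the form on $\mathbb{F}_q^2$), and then passes to an algebraic complement of the radical of $\operatorname{Im}\Psi$ to land in a nondegenerate space that, by the forward implication, must be isometric to $V$. Your approach instead invokes the classification \emph{at the outset} to realize the invertible block $G_b$ as the Gram matrix of a basis of $V$, and then extends to the remaining columns by the unique linear combinations dictated by the column-span relation, using $G = G^\top$ to transport the column relation to rows. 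Your argument is shorter and more transparent for this particular theorem; the paper's argument has the virtue of being uniform with the unitary case and of yielding an explicit factorization algorithm that does not require first isolating a basic submatrix. Your necessity argument is essentially the same as the paper's.
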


\begin{proof}
If $G$ is the Gram matrix of a frame $\Phi = \{ \varphi_j \}_{j\in [n]}$, then (i) holds trivially and (ii) follows from Proposition~\ref{prop: ker Gram}.
For (iii), let $g_j \in \mathbb{F}_q^n$ denote the $j$-th column of $G$ and observe that $A( \sum_{j\in [n]} x_j \varphi_j ) = \sum_{j\in [n]} x_j g_j$ gives a well-defined isomorphism $A \colon V \to \operatorname{Im} G$.
If the basic submatrix $G_b$ arises from a basis $\{ g_j \}_{j\in [n]}$ for $\operatorname{Im} G$, then the corresponding vectors $\{ \varphi_j \}_{j \in I}$ provide a basis for $V$ with Gram matrix $G_b$.
This proves (iii).

For the converse, we proceed as in the proof of Theorem~\ref{thm: factor U}.
The same argument given there constructs a matrix $A = \begin{bmatrix} a_1 & \cdots & a_n \end{bmatrix} \in \mathbb{F}_q^{n\times n}$ such that $(a_i,a_j) = g_{ij}$ for every $i \neq j$.
Next, we want to build a block-diagonal matrix $B = \begin{bmatrix} b_1 & \cdots & b_n \end{bmatrix} \in \mathbb{F}_q^{2n \times n}$ in such a way that the vectors $\psi_j = \begin{bmatrix} a_j \\ b_j \end{bmatrix}$ satisfy
\[
g_{ij} = (\psi_i, \psi_j) = (a_i, a_j) + (b_i, b_j).
\]
In other words, we want to arrange so that $(b_i, b_j) = 0$ for $ i \neq j$, while $(b_j, b_j) = g_{jj} - (a_j, a_j)$.
By Proposition~4.8 of~\cite{Gr02}, there exist vectors $x_j \in \mathbb{F}_q^2$ such that $(x_j,x_j) = g_{jj} - (a_j, a_j)$.
We let $B$ be the block-diagonal matrix $B = \operatorname{diag}(x_1,\dotsc,x_n) \in \mathbb{F}_q^{2n \times n}$.
Then $\Phi = \begin{bmatrix} A \\ B \end{bmatrix} = \begin{bmatrix} \psi_1 & \dotsc & \psi_n \end{bmatrix} \in \mathbb{F}_q^{3n \times n}$ satisfies $\Psi^\top \Psi = G$.

As in the proof of Theorem~\ref{thm: factor U}, we consider $W = \operatorname{Im} \Psi$ and an algebraic complement $U \leq W$ for its radical 
\[
\operatorname{rad} W := \{ x \in W : \langle x, y \rangle = 0 \text{ for every }y \in W\} \leq W.
\]
For each $j$ we decompose $\psi_j = x_j + y_j$ with $x_j \in U$ and $y_j \in \operatorname{rad} W$.
Then $x_1,\dotsc,x_n$ form a frame for $U$ having Gram matrix $G$.
By the forward implication already proved, $U = \operatorname{rank} G = V$  and $\operatorname{discr}(U) = (\det G_b)\mathbb{F}_q^{\times 2} = \operatorname{discr}(V)$.
Hence there is an isometric isomorphism $U \to V$, and the images of $x_1,\dotsc,x_n$ provide a frame for $V$ having Gram matrix $G$.
\end{proof}

\begin{remark}
Suppose we are in Case O (Definition~\ref{def: Case OU}) and $d = \dim V$ is odd.
If $G \in \mathbb{F}_q^{n\times n}$ is symmetric of rank $d$, then some scalar multiple $cG$ occurs as the Gram matrix of a frame for $V$.
Indeed, if $G_b$ is a basic submatrix for $G$ and $c \neq 0$, then $cG_b$ is a basic submatrix for $cG$ with determinant $c^d (\det G_b)$.
Since $d$ is odd we can choose $c$ to ensure that $(\det c G_b) \mathbb{F}_q^{\times 2} = \operatorname{discr}(V)$.
On the other hand, if $d$ is even then rescaling $G$ does not alter the existence of a frame for $V$ having $G$ as a Gram matrix.
\end{remark}

\subsection{Nondegenerate tight frames, projections, and subspaces}
The isometry group $I(V)$ has a natural action on the space of frames for $V$, and its orbits break the set of all frames into equivalence classes.
As expected, the classes for tight frames correspond with certain projections and subspaces.

\begin{proposition}
\label{prop: tight frames subspaces U}
Assume Case U (Definition~\ref{def: Case OU}).
Choose $c \in \mathbb{F}_q^\times$ and $n \geq d = \dim V$.
Then all of the following sets have the same cardinality:
	\begin{itemize}
	\item[(i)]
	$\mathscr{F} = 
	\{\text{unitary equivalence classes of $c$-tight frames for $V$ with $n$ vectors} \}$,
	\item[(ii)]
	$\mathscr{P} = 
	\{\text{self-adjoint rank-$d$ matrices $G \in \mathbb{F}_{q^2}^{n\times n}$ satisfying $G^2 = cG$} \}$,
	\item[(iii)]
	$\mathscr{S} = 
	\{ \text{nondegenerate $d$-dimensional subspaces of $\mathbb{F}_{q^2}^n$} \}$.
	\end{itemize}
Specifically, the functions $f \colon \mathscr{F} \to \mathscr{P}$, $g \colon \mathscr{F} \to \mathscr{S}$, and $h \colon \mathscr{P} \to \mathscr{S}$ given by
\[
f([\Phi]) = \Phi^\dagger \Phi, \qquad g([\Phi]) = \operatorname{Im} \Phi^\dagger, \qquad h(G) = \operatorname{Im} G
\]
are well-defined bijections that satisfy $g = h \circ f$.
\end{proposition}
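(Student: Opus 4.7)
The plan is to verify well-definedness of each map, establish the factorization $g = h \circ f$, and then prove bijectivity of $h$ and $f$ separately, deducing bijectivity of $g$ for free.

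First I would check that the three maps are well-defined on their domains. For $f$, given a $c$-tight frame $\Phi$ with Gram matrix $G = \Phi^\dagger \Phi$, self-adjointness is immediate, the condition $G^2 = cG$ is exactly Proposition~\ref{prop: tight frame}(ii), and $\operatorname{rank} G = d$ follows from Proposition~\ref{prop: ker Gram} together with the fact that $\Phi$ is a frame. If $\Psi = A\Phi$ with $A \in \operatorname{U}(V)$, then $A^\dagger A = I$ gives $\Psi^\dagger\Psi = \Phi^\dagger\Phi$, so $f$ descends to equivalence classes. For $g$, the same substitution yields $\operatorname{Im}\Psi^\dagger = \operatorname{Im}(\Phi^\dagger A^\dagger) = \operatorname{Im}\Phi^\dagger$ (since $A^\dagger$ is invertible), the image has dimension $d$ because $\Phi$ is a frame, and nondegeneracy follows from Proposition~\ref{prop: nondegenerate frame} once one notes that a $c$-tight frame with $c \in \mathbb{F}_q^\times$ has invertible frame operator $cI$. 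For $h$, the key point is to show $\operatorname{Im} G$ is nondegenerate: since $G = G^*$ one has $\operatorname{Ker} G = (\operatorname{Im} G)^\perp$, and $G^2 = cG$ with $c \neq 0$ forces $G|_{\operatorname{Im} G}$ to be multiplication by $c$, so any $v \in \operatorname{Im} G \cap (\operatorname{Im} G)^\perp$ satisfies $Gv = 0$ and $Gv = cv$, hence $v = 0$.

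Next I would verify the commuting triangle $g = h \circ f$, which is the identity $\operatorname{Im}\Phi^\dagger = \operatorname{Im}(\Phi^\dagger\Phi)$ from Proposition~\ref{prop: ker Gram}. After that I would prove $h$ is a bijection directly. Since any nondegenerate $U \leq \mathbb{F}_{q^2}^n$ admits the orthogonal splitting $\mathbb{F}_{q^2}^n = U \oplus U^\perp$, one may define $G$ to act as $cI$ on $U$ and as $0$ on $U^\perp$; this $G$ is self-adjoint (using that $c \in \mathbb{F}_q$ is $\sigma$-fixed, so it commutes across the form), satisfies $G^2 = cG$, has image exactly $U$ (since $c \neq 0$), and thus lies in $\mathscr{P}$ with $h(G) = U$, giving surjectivity. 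For injectivity, if $h(G_1) = h(G_2) = U$, both $G_i$ act as $cI$ on $U$ and annihilate $U^\perp = \operatorname{Ker} G_i$, so they agree on the direct sum decomposition, forcing $G_1 = G_2$.

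For $f$, surjectivity comes from Theorem~\ref{thm: factor U}: any $G \in \mathscr{P}$ is self-adjoint of rank $d$, hence the Gram matrix of some frame $\Phi$ for $V$, and then $(\Phi^\dagger\Phi)^2 = G^2 = cG = c(\Phi^\dagger\Phi)$ combined with Proposition~\ref{prop: tight frame}(ii)~$\Rightarrow$~(i) shows $\Phi$ is $c$-tight. Injectivity is Proposition~\ref{prop: unitary Gram} applied with constant $1$: if $\Phi^\dagger\Phi = \Psi^\dagger\Psi$, there is a unique $A \in \operatorname{U}(V)$ with $\Psi = A\Phi$, so $[\Phi] = [\Psi]$. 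Finally, since $f$ and $h$ are bijections and $g = h \circ f$, the map $g$ is also a bijection, and the three sets have the same cardinality.

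The main subtlety I anticipate is the well-definedness of $h$, specifically proving $\operatorname{Im} G$ is nondegenerate and not merely $d$-dimensional; the eigenspace trick using $G^2 = cG$ with $c \neq 0$ is what replaces the spectral arguments one would use over $\mathbb{R}$ or $\mathbb{C}$, where such statements are routine but over finite fields require care because a self-adjoint operator need not be diagonalizable (as noted in the Remark following Case~U).
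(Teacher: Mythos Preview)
Your proposal is correct and follows essentially the same approach as the paper's proof: both establish that $f$ is a bijection via Proposition~\ref{prop: unitary Gram} and Theorem~\ref{thm: factor U}, that $h$ is a bijection by analyzing the idempotent $c^{-1}G$ and constructing the scaled orthogonal projection onto a given nondegenerate subspace, and then obtain $g$ from the factorization $g = h\circ f$. Your treatment is slightly more explicit in separately verifying well-definedness of $g$ and noting that $c\in\mathbb{F}_q$ is $\sigma$-fixed, but these are minor expository differences rather than a different strategy.
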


\begin{proof}
To see that $f$ is a bijection, consider the related mapping $\Phi \mapsto \Phi^\dagger \Phi$ of a $c$-tight frame $\Phi$ to its Gramian $\Phi^\dagger \Phi$.
Its fibers consist of unitary equivalence classes of $c$-tight frames, by Proposition~\ref{prop: unitary Gram}.
On the other hand, Proposition~\ref{prop: tight frame} and Theorem~\ref{thm: factor U} imply that its range is precisely $\mathscr{P}$.
Factoring out equivalence classes creates a bijection $f \colon \mathscr{F} \to \mathscr{P}$.

Next we consider $h$.
Given $G \in \mathscr{P}$, set $W = \operatorname{Im} G$ and $P = c^{-1} G$.
Then $P^2 = P = P^*$ and $\operatorname{Im} P = W$.
In particular, $W^\perp = (\operatorname{Im} P^*)^\perp = \operatorname{Ker} P$.
For any $x \in W$, the identity $P^2 = P$ implies that $Px = x$, and for any $x \in W^\perp$ we have $Px = 0$.
It follows that $W \cap W^\perp = \{0\}$.
In other words, the mapping $G \mapsto h(G)= \operatorname{Im} G$ sends $\mathscr{P}$ into $\mathscr{S}$.
Furthermore, $h \colon \mathscr{P} \to \mathscr{S}$ is injective since $\mathbb{F}_{q^2}^n = W \oplus W^\perp$ and we have determined the action of $P= c^{-1} G$ on both spaces.
Finally, if we are given $W \in \mathscr{S}$ we may choose an orthonormal basis $w_1,\dotsc,w_d$ for $W$ and define $G = c \sum_{j\in [d]} w_j w_j^*$.
Then $G \in \mathscr{P}$ has $h(G) = \operatorname{Im} G = W$.
Hence $h \colon \mathscr{P} \to \mathscr{S}$ is a bijection, and so is $g = h \circ f$.
\end{proof}

\begin{lemma}
\label{lem: discr ran}
Assume Case O (Definition~\ref{def: Case OU}). 
If $\Phi$ is a nondegenerate frame for $V$ then 
\[
\operatorname{discr}( \operatorname{Im} \Phi^\dagger) =  \det( \Phi \Phi^\dagger) \cdot \operatorname{discr}(V).
\]
\end{lemma}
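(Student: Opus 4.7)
The plan is to use $\Phi^\dagger$ itself as an isomorphism from $V$ onto $W := \operatorname{Im}\Phi^\dagger$, transporting the inherited form from $W$ back to $V$ and reducing the discriminant identity to a single determinant calculation.

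First I would observe that the assumption that $\Phi$ is nondegenerate means $\Phi\Phi^\dagger \in \operatorname{GL}(V)$, which forces $\Phi^\dagger$ to be injective: if $\Phi^\dagger v = 0$ then $\Phi\Phi^\dagger v = 0$, hence $v = 0$. Therefore $\Phi^\dagger$ restricts to a linear bijection from $V$ onto $W$, and $W$ is a nondegenerate subspace of $\mathbb{F}_q^n$ by Proposition~\ref{prop: nondegenerate frame}, so $\operatorname{discr}(W)$ is well-defined. The defining property of the adjoint immediately gives the pullback identity
$$
(\Phi^\dagger u, \Phi^\dagger v) = \langle u, \Phi\Phi^\dagger v\rangle \qquad (u,v \in V).
$$

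Next I would fix a basis $e_1,\ldots,e_d$ of $V$ with Gram matrix $M$, so that $\operatorname{discr}(V) = (\det M)\mathbb{F}_q^{\times 2}$, and let $F \in \mathbb{F}_q^{d\times d}$ be the matrix of the frame operator $\Phi\Phi^\dagger$ in this basis. The vectors $\Phi^\dagger e_1,\ldots,\Phi^\dagger e_d$ then form a basis for $W$, and the pullback identity expands the $(i,j)$ entry of their Gram matrix as
$$
(\Phi^\dagger e_i, \Phi^\dagger e_j) = \langle e_i, \Phi\Phi^\dagger e_j\rangle = (MF)_{ij},
$$
so $W$ has Gram matrix $MF$ in this basis. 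Taking determinants and reducing modulo $\mathbb{F}_q^{\times 2}$ yields
$$
\operatorname{discr}(W) = (\det M \cdot \det F)\,\mathbb{F}_q^{\times 2} = \det(\Phi\Phi^\dagger)\cdot\operatorname{discr}(V),
$$
since $\det F = \det(\Phi\Phi^\dagger)$ is basis-invariant.

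The proof is essentially one change-of-coordinates computation, and I do not anticipate a serious obstacle. The main thing to verify carefully is the matrix identity $[\langle e_i, \Phi\Phi^\dagger e_j\rangle]_{i,j} = MF$, but this is a short direct expansion using $\Phi\Phi^\dagger e_j = \sum_k F_{kj}\, e_k$ and the symmetry of $M$.
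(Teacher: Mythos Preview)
Your proof is correct and follows essentially the same approach as the paper: both compute a Gram matrix for a basis of $W = \operatorname{Im}\Phi^\dagger$ and relate its determinant to $\det(\Phi\Phi^\dagger)$ and $\det M$. The paper identifies $V$ with $\mathbb{F}_q^d$ (so $\Phi^\dagger = \Phi^* M$ as matrices) and uses the columns of $\Phi^*$ as the basis for $W$, obtaining the Gram matrix $\Phi\Phi^*$ and the identity $\det(\Phi\Phi^\dagger)=\det(\Phi\Phi^*)\det M$; you instead use the basis $\Phi^\dagger e_1,\ldots,\Phi^\dagger e_d$ and the pullback identity to get the Gram matrix $MF$, which differs from $\Phi\Phi^*$ only by the change-of-basis factor $\det(M)^2 \in \mathbb{F}_q^{\times 2}$.
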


\begin{proof}
We may assume $V = \mathbb{F}_q^d$ has form $\langle x, y \rangle = x^* M y$.
Representing $\Phi$ as a matrix, we have $\det( \Phi \Phi^\dagger) = \det( \Phi \Phi^*) \det (M)$.
Since $\Phi$ is nondegenerate, $\Phi \Phi^*$ is a Gram matrix for $\operatorname{Im} \Phi^* = \operatorname{Im} \Phi^\dagger$.
Consequently, $\det(\Phi \Phi^\dagger)\, \mathbb{F}^{\times 2} = \operatorname{discr}( \operatorname{Im} \Phi^\dagger) \cdot \operatorname{discr} (V)$.
\end{proof}

\begin{proposition}
\label{prop: tight frames subspaces O}
Assume Case O (Definition~\ref{def: Case OU}).
Choose $c \in \mathbb{F}_q^\times$ and $n \geq d = \dim V$.
Then all of the following sets have the same cardinality:
	\begin{itemize}
	\item[(i)]
	$\mathscr{F}$, the set of all $\operatorname{O}(V)$-equivalence classes of $c$-tight frames for $V$ with $n$ vectors,

	\item[(ii)]
	$\mathscr{P}$, the set of all symmetric rank-$d$ matrices $G \in \mathbb{F}_{q}^{n\times n}$ satisfying $G^2 = cG$, such that a basic submatrix  $G_b$ satisfies $(\det G_b) \mathbb{F}_q^{\times 2} = \operatorname{discr}(V)$,

	\item[(iii)]
	$\mathscr{S}$, the set of all nondegenerate $d$-dimensional subspaces $W \leq \mathbb{F}_{q}^n$ with $\operatorname{discr}(W) = c^d \cdot \operatorname{discr}(V)$.
	\end{itemize}
Specifically, the functions $f \colon \mathscr{F} \to \mathscr{P}$, $g \colon \mathscr{F} \to \mathscr{S}$, and $h \colon \mathscr{P} \to \mathscr{S}$ given by
\[
f([\Phi]) = \Phi^\dagger \Phi, \qquad g([\Phi]) = \operatorname{Im} \Phi^\dagger, \qquad h(G) = \operatorname{Im} G
\]
are well-defined bijections that satisfy $g = h \circ f$.
\end{proposition}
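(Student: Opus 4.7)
The plan is to mirror the proof of Proposition~\ref{prop: tight frames subspaces U}, substituting Theorem~\ref{thm: factor O} for Theorem~\ref{thm: factor U} and carefully tracking discriminant data via Lemma~\ref{lem: discr ran}.

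First I would establish that $f$ is a well-defined bijection onto $\mathscr{P}$. Well-definedness on $\operatorname{O}(V)$-orbits is immediate: if $\Phi' = U\Phi$ for $U \in \operatorname{O}(V)$, then $(\Phi')^\dagger \Phi' = \Phi^\dagger U^\dagger U \Phi = \Phi^\dagger \Phi$. For a $c$-tight frame $\Phi$, the Gramian $G = \Phi^\dagger \Phi$ is symmetric (since we are in Case~O), has rank $d$ by Proposition~\ref{prop: ker Gram}, and satisfies $G^2 = c G$ by Proposition~\ref{prop: tight frame}. The basic submatrix condition $(\det G_b)\mathbb{F}_q^{\times 2} = \operatorname{discr}(V)$ is precisely what Theorem~\ref{thm: factor O}(iii) demands of any Gram matrix of a frame for $V$. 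Injectivity of $f$ is Proposition~\ref{prop: unitary Gram} with scaling $c = 1$, and surjectivity follows by applying Theorem~\ref{thm: factor O} to produce a frame with the prescribed Gram matrix and then invoking Proposition~\ref{prop: tight frame} to recover tightness from $G^2 = c G$.

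Next I would handle $h$. Given $G \in \mathscr{P}$, set $W = \operatorname{Im} G$ and $P = c^{-1} G$, so $P = P^\top$ and $P^2 = P$. Exactly as in the Case~U proof, this gives $\mathbb{F}_q^n = W \oplus W^\perp$, so $W$ is nondegenerate of dimension $d$. For the discriminant of $W$, I would pick a basis $\{g_j\}_{j \in I}$ of $W$ consisting of columns of $G$ and compute
\[
\langle g_i, g_j \rangle = g_i^\top g_j = (G^\top G)_{ij} = (G^2)_{ij} = c G_{ij},
\]
so the induced Gram matrix of $W$ is $c G_b$. Hence $\operatorname{discr}(W) = c^d (\det G_b)\mathbb{F}_q^{\times 2} = c^d \operatorname{discr}(V)$, placing $\operatorname{Im} G$ in $\mathscr{S}$. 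Injectivity of $h$ is the same as in Case~U: $G = c P$ is determined by $W$ since the orthogonal projection onto a nondegenerate $W$ along $W^\perp$ is unique and self-adjoint. For surjectivity, given $W \in \mathscr{S}$ I would let $P$ be this orthogonal projection and set $G = c P$; the calculation above in reverse recovers $(\det G_b)\mathbb{F}_q^{\times 2} = \operatorname{discr}(V)$.

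Finally, $g = h \circ f$ follows at once from the identity $\operatorname{Im}(\Phi^\dagger \Phi) = \operatorname{Im} \Phi^\dagger$ in Proposition~\ref{prop: ker Gram}; in particular, $g$ is a well-defined bijection as the composition of two bijections. The main obstacle is the discriminant bookkeeping introduced by the factor $c^d$, which is absent in Case~U. The key identification is that a basic submatrix $G_b$ is \emph{not} itself a Gram matrix for $W = \operatorname{Im} G$ under the ambient form; rather, the induced Gram matrix on $W$ is $c G_b$, and this is what produces the $c^d \operatorname{discr}(V)$ condition defining $\mathscr{S}$. (Alternatively one may verify this through Lemma~\ref{lem: discr ran} applied to any representative $\Phi \in f^{-1}(G)$, using $\det(\Phi \Phi^\dagger) = c^d$.) Once this discrepancy is pinned down, the matching of the three conditions falls into place automatically.
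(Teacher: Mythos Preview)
Your argument is correct and follows the same overall structure as the paper's proof: reduce to the Case~U argument for $f$ and for the injectivity of $h$, and isolate the discriminant bookkeeping as the only new content.

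Where you diverge slightly is in how that bookkeeping is carried out for $h$.  The paper shows $h(G)\in\mathscr{S}$ by first invoking Theorem~\ref{thm: factor O} to write $G=\Phi^\dagger\Phi$ for some $c$-tight frame $\Phi$, and then applying Lemma~\ref{lem: discr ran} to get $\operatorname{discr}(\operatorname{Im} G)=\operatorname{discr}(\operatorname{Im}\Phi^\dagger)=c^d\cdot\operatorname{discr}(V)$.  For surjectivity of $h$, the paper builds an explicit $c$-tight frame $\Phi$ with $\operatorname{Im}\Phi^\dagger=W$ (by matching a scaled Gram matrix $cM$ of $V$ against a basis of $W$) and then takes $G=\Phi^\dagger\Phi$.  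Your route is more self-contained: the observation that the columns $\{g_j\}_{j\in I}$ of $G$ have ambient Gram matrix $(G^2)_{ij}=cG_{ij}$, so that $W=\operatorname{Im} G$ has Gram matrix $cG_b$, handles both directions at once without ever materializing a frame or appealing to Lemma~\ref{lem: discr ran}.  This is a cleaner way to see where the factor $c^d$ comes from, and it makes surjectivity of $h$ immediate via $G=cP$ for $P$ the orthogonal projection onto $W$.  The paper's route has the mild advantage of directly exhibiting surjectivity of $g$ (rather than obtaining it as $h\circ f$), but the two arguments are otherwise equivalent in content.
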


As a consequence of Proposition~\ref{prop: tight frames subspaces O}, when $d$ is odd every nondegenerate subspace $W \leq \mathbb{F}_{q}^n$ corresponds with a tight frame for $V$, but the frame constant $c$ must satisfy $c\, \mathbb{F}_q^{\times 2} = \operatorname{discr}(W) / \operatorname{discr}(V)$.
On the other hand, when $d$ is even, a nondegenerate subspace $W \leq \mathbb{F}_{q}^n$ yields a tight frame for $V$ if and only if $\operatorname{discr}(W) = \operatorname{discr}(V)$. 
Here the frame constant may be arbitrary.

\begin{proof}
As in the proof of Proposition~\ref{prop: tight frames subspaces U}, the mapping $\Phi \mapsto \Phi^\dagger \Phi$ induces a bijection $\mathscr{F} \to \mathscr{P}$.
Next we consider $h$.
Given $G \in \mathscr{P}$, Theorem~\ref{thm: factor U} provides a $c$-tight frame $\Phi$ for $V$ such that $G = \Phi^\dagger \Phi$.
Then Lemma~\ref{lem: discr ran} implies that $\operatorname{Im} G = \operatorname{Im} \Phi^\dagger$ has discriminant $c^d \cdot \operatorname{discr} V$.
In other words, the mapping $G \mapsto h(G) = \operatorname{Im} G$ sends $\mathscr{P}$ into $\mathscr{S}$.
The same argument of Proposition~\ref{prop: tight frames subspaces U} shows that $h \colon \mathscr{P} \to \mathscr{S}$ is injective, and it remains only to prove it is sujective.

Choose any subspace $W \leq \mathbb{F}_q^n$ in $\mathscr{S}$.
To show that $W = \operatorname{Im} G$ for some $G \in \mathscr{P}$, it suffices to construct a $c$-tight frame $\Phi$ for $V$ having $\operatorname{Im} \Phi^\dagger = \operatorname{Im} \Phi^\dagger \Phi = W$.
Fix a basis $v_1,\dotsc,v_d$ for $V$, and let $M = \begin{bmatrix} \langle v_i, v_j \rangle \end{bmatrix} \in \mathbb{F}_q^{d\times d}$ be its Gram matrix.
Since
\[
\det(cM) \mathbb{F}_q^{\times 2} = c^d \cdot \det(M) \mathbb{F}_q^{\times 2} = c^d \cdot \operatorname{discr}(V) = \operatorname{discr}(W),
\]
there is a basis $w_1,\dotsc,w_d$ for $W$ satisfying $(w_i, w_j) = c \langle v_i, v_j \rangle$ for every $i,j \in [d]$.
Let $A \colon V \to \mathbb{F}_q^{n\times n}$ be the unique linear operator with $Av_i = w_i$ for every $i \in [d]$.
It is clearly injective, so $\Phi := A^\dagger \colon \mathbb{F}_q^n \to V$ is the synthesis operator of a frame for $V$.
Furthermore, we have arranged so that $( \Phi^\dagger u, \Phi^\dagger v ) = c \langle u, v \rangle$ for every $u,v \in V$, so $\Phi$ is a $c$-tight frame.
The proof is complete since $\operatorname{Im} \Phi^\dagger = W$ and $g = h \circ f$.
\end{proof}

\begin{remark}
In either Case U or Case O (Definition~\ref{def: Case OU}) we can find the number of $\operatorname{I}(V)$-equivalence classes of nondegenerate tight frames by counting subspaces of $\mathbb{F}^n$.
This can be done with an orbit-stabilizer argument for the action of an isometry group, as described on page~148 of \cite{Ar57}.
We omit details but report the results.

In Case U, choose $n \geq d= \dim V$ and $c \in \mathbb{F}_q^\times$.
Then the number of equivalence classes of $c$-tight frames for $V$ with $n$ vectors equals
\[
\frac{ | \mathrm{U}(n,q) | }{ | \mathrm{U}(d,q) | \cdot | \mathrm{U}(n-d,q) |},
\]
where $|\mathrm{U}(n,q)|$ can be found in Theorem~11.28 of~\cite{Gr02}.

A similar formula applies in Case O, but now discriminants play a role along with the frame constant $c \in \mathbb{F}_q^\times$.
For $n \geq d = \dim V$ the number of equivalence classes of $c$-tight frames for $V$ with $n$ vectors equals
\[
\frac{ |\mathrm{O}(\mathbb{F}_q^n)| }{ |\mathrm{O}(W)| \cdot | \mathrm{O}(U) | },
\]
where $\mathbb{F}_q^n$ refers to the real model and $W$ and $U$ are quadratic spaces over $\mathbb{F}_q$ such that $\dim W = d$, $\dim U = n-d$, $\operatorname{discr} W = c^d \cdot \operatorname{discr} V$, and $\operatorname{discr} U = c^d\, \mathbb{F}_q^{\times 2}$.
The orders of the orthogonal groups are given in Theorem~9.11 of~\cite{Gr02}.
\end{remark}

\begin{remark}
In contrast with nondegenerate tight frames, $I(V)$-equivalence classes of $0$-tight frames may not be identified with the range of a corresponding analysis operator.
For example, suppose $\Phi \in \mathbb{F}_{q^2}^{d\times n}$ is a $0$-tight frame under the complex model, and choose any invertible operator $A \in \operatorname{GL}(d,q^2)$.
Then $\Psi := A \Phi \in \mathbb{F}_{q^2}^{d \times n}$ is a frame, and $\operatorname{Im} \Psi^\dagger = \operatorname{Im} \Phi^\dagger$.
By Corollary~\ref{cor: iso bound}, $\Psi$ is a $0$-tight frame.
However, $A$ may be chosen so that $\Psi^\dagger \Psi = \Phi^\dagger (A^\dagger A) \Phi \neq \Phi^\dagger \Phi$, in which case $\Psi$ and $\Phi$ are unitarily inequivalent by Proposition~\ref{prop: unitary Gram}.
\end{remark}

\subsection{Naimark complements}

In real and complex frame theory, for every tight frame $\Phi \in \mathbb{C}^{d \times n}$ there exists a \emph{Naimark complement} $\Psi \in \mathbb{C}^{(n-d) \times n}$ for which the stacked array $A = \begin{bmatrix} \Phi \\ \Psi \end{bmatrix} \in \mathbb{C}^{n\times n}$ is a scalar multiple of a unitary.
For any choice of Naimark complement, the identity $cI = AA^* = \begin{bmatrix} \Phi \Phi^* & \Phi \Psi^* \\ \Psi \Phi^* & \Psi \Psi^* \end{bmatrix}$ implies that $\Psi$ is a tight frame whose analysis operator maps into the kernel of the synthesis operator for $\Phi$.
Meanwhile, the formula $cI = A^* A = \Phi^* \Phi + \Psi^* \Psi$ shows that the Gram matrices of $\Phi$ and $\Psi$ have opposite entries off the diagonal.
If $\Phi$ is an equiangular tight frame (as defined in the introduction), then so is $\Psi$.
As such, the sizes of real and complex equiangular tight frames occur in Naimark complementary pairs $(d \times n, (n-d) \times n)$.
We now give analogues of these results for finite fields.

\begin{proposition}
\label{prop: Naimark U}
Assume Case~U (Definition~\ref{def: Case OU}).
Let $\Phi$ be a $c$-tight frame ($c \neq 0$) for $V$ with Gram matrix $G \in \mathbb{F}_{q^2}^{n\times n}$.
Then $c I - G$ is the Gram matrix of a $c$-tight frame $\Psi$ for a unitary space $W$ of dimension $n-d$, such that $\operatorname{Im} \Psi^\dagger = (\operatorname{Im} \Phi^\dagger)^\perp$.
\end{proposition}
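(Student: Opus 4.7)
The plan is to set $H := cI - G$ and show directly that $H$ is the Gram matrix of the desired Naimark complement. First I would verify three matrix-level properties. Self-adjointness $H^* = H$ follows from $G^* = G$ together with the observation that $cI = (\Phi\Phi^\dagger)^\dagger = c^\sigma I$ forces $c \in \mathbb{F}_q$, so $c^\sigma = c$. The identity $H^2 = cH$ is a direct expansion using $G^2 = cG$, which in turn comes from $\Phi^\dagger\Phi\Phi^\dagger\Phi = c\,\Phi^\dagger\Phi$. For the rank, note that $c^{-1}G$ is a self-adjoint idempotent on $\mathbb{F}_{q^2}^n$ whose image $\operatorname{Im} G = \operatorname{Im}\Phi^\dagger$ (Proposition~\ref{prop: ker Gram}) has dimension $d$; consequently $c^{-1}H = I - c^{-1}G$ is the complementary self-adjoint idempotent, of rank $n-d$.

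Next, Theorem~\ref{thm: factor U} applied to the self-adjoint matrix $H$ of rank $n-d$ produces a frame $\Psi$ for some unitary space $W$ of dimension $n-d$ whose Gram matrix is $H$. Since $(\Psi^\dagger\Psi)^2 = H^2 = cH = c\,\Psi^\dagger\Psi$, Proposition~\ref{prop: tight frame}(ii) immediately certifies $\Psi$ as a $c$-tight frame.

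It then remains to identify $\operatorname{Im}\Psi^\dagger$. By Proposition~\ref{prop: ker Gram}, $\operatorname{Im}\Psi^\dagger = \operatorname{Im}\Psi^\dagger\Psi = \operatorname{Im} H$, so I need $\operatorname{Im} H = (\operatorname{Im}\Phi^\dagger)^\perp = (\operatorname{Im} G)^\perp$. The identity $G^2 = cG$ says that $G$ acts as multiplication by $c$ on its own image, so $H$ annihilates $\operatorname{Im} G$, yielding $\operatorname{Im} G \subseteq \operatorname{Ker} H$. Dually, every $x \in \operatorname{Ker} G = (\operatorname{Im} G^*)^\perp = (\operatorname{Im} G)^\perp$ satisfies $Hx = cx$, so $(\operatorname{Im} G)^\perp \subseteq \operatorname{Im} H$. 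Since $\operatorname{Im}\Phi^\dagger$ is nondegenerate by Proposition~\ref{prop: tight frames subspaces U}, the decomposition $\mathbb{F}_{q^2}^n = \operatorname{Im} G \oplus (\operatorname{Im} G)^\perp$ is genuine, and combining this with $\operatorname{rank} H = n-d$ forces the inclusions above to be equalities.

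I do not foresee a serious obstacle: the argument is a direct translation of the classical Naimark construction, with the spectral calculus of positive operators replaced throughout by the single algebraic identity $G^2 = cG$. The one finite-field subtlety worth flagging is that, unlike over $\mathbb{C}$, one cannot appeal to any positive-definite orthogonal decomposition and must invoke nondegeneracy of $\operatorname{Im}\Phi^\dagger$ explicitly to secure the splitting used in the final dimension count.
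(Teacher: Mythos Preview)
Your proposal is correct and follows essentially the same route as the paper: set $H = cI - G$, verify $H^* = H$, $H^2 = cH$, and $\operatorname{rank} H = n-d$, then invoke Theorem~\ref{thm: factor U} and Proposition~\ref{prop: tight frame} to obtain the $c$-tight frame $\Psi$, and finally identify $\operatorname{Im}\Psi^\dagger = \operatorname{Im} H = (\operatorname{Im} G)^\perp$. The only cosmetic difference is that the paper reaches $\operatorname{Im} H = (\operatorname{Im} G)^\perp$ in one stroke via $\operatorname{Ker} H = \operatorname{Im} G$ together with $H = H^*$ (so $\operatorname{Im} H = (\operatorname{Ker} H)^\perp$), whereas you argue it by two inclusions plus a dimension count using nondegeneracy; both are fine.
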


\begin{proof}
Since $G = \Phi^\dagger \Phi$ satisfies $G^2 = cG$, the kernel of $H := cI -G$ coincides with the range of $G$.
As $H=H^*$ it follows that $\operatorname{Im} H = ( \operatorname{Im} G)^\perp$, and $\operatorname{rank} H = n-d$.
Apply Theorem~\ref{thm: factor U} to write $H = \Psi^\dagger \Psi$, where $\Psi$ is a $d\times n$ frame in a unitary geometry.
Then $\Psi$ is a $c$-tight frame since $H^2 = cH$, and $\operatorname{Im} \Psi^\dagger = \operatorname{Im} H = (\operatorname{Im} \Phi^\dagger)^\perp$.
\end{proof}

\begin{proposition}
\label{prop: Naimark O}
Assume Case O (Definition~\ref{def: Case OU}).
Let $\Phi$ be a $c$-tight frame ($c\neq 0$) for $V$ with Gram matrix $G \in \mathbb{F}_q^{n\times n}$, and choose $a \in \mathbb{F}_q^\times$.
Then $a(cI - G)$ is the Gram matrix of an $ac$-tight frame $\Psi$ for a quadratic space $W$ of dimension $n-d$ such that $\operatorname{discr}(W) = a^{n-d} c^n \cdot \operatorname{discr}(V)$.
Here $\operatorname{Im} \Psi^\dagger = (\operatorname{Im} \Phi^\dagger)^\perp$.
\end{proposition}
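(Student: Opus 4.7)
The plan is to follow the outline of Proposition~\ref{prop: Naimark U} but with an additional discriminant computation tailored to the orthogonal case. Setting $H := a(cI - G)$, I would first verify the algebraic facts: symmetry $H = H^\top$ is immediate since $G = G^\top$, and the expansion $(cI - G)^2 = c^2 I - 2cG + G^2 = c^2 I - 2cG + cG = c(cI - G)$ (using $G^2 = cG$) yields $H^2 = (ac)H$.

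Next I would analyze the linear algebra of $H$. Because $c \neq 0$, the matrix $P := c^{-1}G$ is a genuine idempotent with $\operatorname{Im} P = \operatorname{Im} G$ and $\operatorname{Ker} P = \operatorname{Ker} G$, so $H = ac(I - P)$ is a scalar multiple of the complementary projection. Hence $\operatorname{rank} H = n - d$ and $\operatorname{Im} H = \operatorname{Ker} G$. Since $G = G^\top$, the kernel and image are orthogonal complements in the real model; combined with $\operatorname{Im} G = \operatorname{Im}\Phi^\dagger\Phi = \operatorname{Im}\Phi^\dagger$ from Proposition~\ref{prop: ker Gram}, this gives $\operatorname{Im} H = (\operatorname{Im}\Phi^\dagger)^\perp$. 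Applying Theorem~\ref{thm: factor O} to the symmetric matrix $H$ of rank $n-d$ produces a frame $\Psi$ for some quadratic space $W$ with $\dim W = n-d$ and $\Psi^\dagger\Psi = H$. Proposition~\ref{prop: tight frame}(ii) together with $H^2 = (ac)H$ shows that $\Psi$ is $(ac)$-tight, and $\operatorname{Im}\Psi^\dagger = \operatorname{Im} H$ (again by Proposition~\ref{prop: ker Gram}) matches the orthogonal complement identified above.

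The heart of the proof -- and the only step where Case O truly departs from Case U -- is the discriminant calculation. Since $c \neq 0$, both $\Phi$ and $\Psi$ are nondegenerate, so by Proposition~\ref{prop: nondegenerate frame} the subspaces $\operatorname{Im}\Phi^\dagger$ and $\operatorname{Im}\Psi^\dagger = (\operatorname{Im}\Phi^\dagger)^\perp$ are nondegenerate and furnish an orthogonal direct sum decomposition of $\mathbb{F}_q^n$. Because discriminants multiply on orthogonal direct sums and the real model on $\mathbb{F}_q^n$ has trivial discriminant, I obtain $\operatorname{discr}(\operatorname{Im}\Phi^\dagger) \cdot \operatorname{discr}(\operatorname{Im}\Psi^\dagger) = 1$ in $\mathbb{F}_q^\times / \mathbb{F}_q^{\times 2}$. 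Lemma~\ref{lem: discr ran} evaluates each factor as $\operatorname{discr}(\operatorname{Im}\Phi^\dagger) = c^d \operatorname{discr}(V)$ and $\operatorname{discr}(\operatorname{Im}\Psi^\dagger) = (ac)^{n-d}\operatorname{discr}(W)$. Solving and exploiting the fact that every element of $\mathbb{F}_q^\times/\mathbb{F}_q^{\times 2}$ is its own inverse yields $\operatorname{discr}(W) = c^d (ac)^{n-d} \operatorname{discr}(V) = a^{n-d} c^n \operatorname{discr}(V)$, as required. The only mild obstacle is careful bookkeeping of the exponents of $a$ and $c$ modulo the subgroup of squares; everything else is a direct adaptation of the unitary Naimark argument.
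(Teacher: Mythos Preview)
Your proposal is correct and follows essentially the same route as the paper: both reduce to the Naimark~U argument for the rank/image/tightness claims, then compute $\operatorname{discr}(W)$ via the orthogonal splitting $\mathbb{F}_q^n = \operatorname{Im}\Phi^\dagger \oplus \operatorname{Im}\Psi^\dagger$ together with the relation between $\operatorname{discr}(\operatorname{Im}\Phi^\dagger)$ and $\operatorname{discr}(V)$. The only cosmetic difference is that you invoke Lemma~\ref{lem: discr ran} directly (with $\det(\Phi\Phi^\dagger)=c^d$), whereas the paper cites the equivalent statement packaged inside Proposition~\ref{prop: tight frames subspaces O}.
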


\begin{proof}
The same argument of Proposition~\ref{prop: Naimark U} applies, and we need only compute the discriminant of $W$.
Put $H = a(cI-G)$, and let $W$ be a quadratic space of dimension $n-d$ admitting a frame $\Psi$ with Gram matrix $H$.
As $\Phi$ is nondegenerate, $\mathbb{F}_q^n = \operatorname{Im} G \oplus \operatorname{Im} H$.
Therefore $\operatorname{discr}(\operatorname{Im} G)  \cdot \operatorname{discr}(\operatorname{Im} H) = \operatorname{discr}(\mathbb{F}_q^n)$ is trivial, i.e.,~$\operatorname{discr}(\operatorname{Im} G)  = \operatorname{discr}(\operatorname{Im} H)$.
The latter are related to $\operatorname{discr}(V)$ and $\operatorname{discr}(W)$ as in Proposition~\ref{prop: tight frames subspaces O}, and, in particular, $ c^d \cdot \operatorname{discr}(V) = (ac)^{n-d} \cdot \operatorname{discr}(W)$.
\end{proof}

In Proposition~\ref{prop: Naimark O}, the choice of $a \in \mathbb{F}_q^\times$ makes a difference for $\operatorname{discr}(W)$ if and only if $n-d$ is odd.
For both Proposition~\ref{prop: Naimark U} and Proposition~\ref{prop: Naimark O} it is essential that $c \neq 0$, since a $0$-tight frame of size $d \times n$ exists only if $n \geq 2d$.

\subsection{Equal norm tight frames}

\begin{definition}
Let $\Phi = \{ \varphi_j \}_{j\in [n]}$ be a $c$-tight frame for $V$.
If there is a constant $a\in \mathbb{F}$ (possibly zero) such that $\langle \varphi_j, \varphi_j \rangle = a$ for every $j \in [n]$, then we call $\Phi$ an $(a,c)$-\textbf{equal norm tight frame}, or $(a,c)$-NTF.
\end{definition}

NTFs are generalizations of unit norm tight frames that allow arbitrary norms.
If there exists nonzero $\alpha \in \mathbb{F}$ such that $\alpha \alpha^\sigma = a$, then we may rescale an $(a,c)$-NTF to obtain a $(1,c/a)$-NTF, with unit norm.
However this is not always possible.
Some authors use the abbreviation ENTF instead of NTF.
We eschew this terminology in order to avoid confusion with the \emph{stronger} notion of equiangular tight frame (ETF).

If $\Phi$ is an $(a,c)$-NTF with $n$ vectors, then the traces of $\Phi^\dagger \Phi$ and $\Phi \Phi^\dagger$ give
\begin{equation}
\label{eq:nadc}
na = dc,
\end{equation}
where $d = \dim V$ as usual.
For example, an $(a,0)$-NTF exists only if $a=0$ or $\operatorname{char} \mathbb{F} \mid n$.

\begin{example}
\label{ex:DFT}
There are finite field versions of harmonic frames, which provide a large supply of NTFs in Case U (Definition~\ref{def: Case OU}).
If $m \mid q+1$ then a primitive $m$-th root of unity $\omega \in \mathbb{F}_{q^2}^\times$ satisfies $\omega^{q+1} = 1$, and we may create the matrix $\mathcal{F} = \begin{bmatrix} \omega^{ij} \end{bmatrix}_{ij} \in \mathbb{F}_{q^2}^{m\times m}$.
It is a \textbf{Hadamard matrix} of order $m$ since $\mathcal{F}^* \mathcal{F} = m I$ and every entry of $\mathcal{F}$ is unimodular.
By taking tensor powers, we may create a Hadamard matrix $H$ of order $n$ whenever every prime factor of $n$ divides $q+1$.
Then we may select any $d \leq n$ rows of $H$ to produce a $(d,n)$-NTF $\Phi \in \mathbb{F}_{q^2}^{d\times n}$.
\end{example}

\begin{remark}
Assume $q$ is odd in Case U (Definition~\ref{def: Case OU}).
By modifying the ``spectral tetris'' construction of~\cite{CFMWZ11}, one may create an $(a,c)$-NTF of size $d\times n$ whenever~\eqref{eq:nadc} holds and either $c \neq 0$ or $a=c=0$.
We omit details, and leave open the general problem of characterizing NTF existence.
\end{remark}

\begin{example}
\label{ex:max NTF}
In the complex model, choose any $d > 1$ and let $\Phi$ consist of one unit vector from each nonisotropic 1-dimensional subspace of $\mathbb{F}_{q^2}^d$.
Then $\Phi \in \mathbb{F}_{q^2}^{d\times n}$ is a $(1,0)$-NTF with $n = q^{d-1}\,\frac{q^d + (-1)^{d+1}}{q+1}$ vectors, as we now explain.

Let $\omega \in \mathbb{F}_{q^2}^\times$ be a generator for the subgroup $\mathbb{T}_q \leq \mathbb{F}_{q^2}^\times$ of unimodular scalars, and consider the unit sphere of $\mathbb{F}_{q^2}^d$ expressed as columns of the matrix
\[
\Psi = \left[ \begin{array}{cccc} \Phi & \omega \Phi & \cdots & \omega^q \Phi \end{array} \right] \in \mathbb{F}_{q^2}^{d \times n(q+1)}.
\]
By an inductive argument $\Psi$ has $q^{d-1}[ q^d + (-1)^{d+1}] = n(q+1)$ columns, which gives the formula for $n$.
Furthermore, the columns of $\Psi$ (hence also of $\Phi$) span $\mathbb{F}_{q^2}^d$ since the former contain the standard basis, and $\Psi \Psi^* = (q+1) \Phi \Phi^* = \Phi \Phi^*$.
Therefore it suffices to show $\Psi \Psi^* = 0$.

Any $U \in \mathrm{U}(d,q)$ permutes the unit sphere, so it commutes with $\Psi \Psi^*$.
Taking permutation matrices for $U$ we see that $\Psi \Psi^*$ has the form $cI + bJ$ for $c,b\in \mathbb{F}_q$.
Then $bJ$ also commutes with every $U \in \mathrm{U}(d,q)$.
By an application of the Witt Extension Theorem (Theorem~10.12 of~\cite{Gr02}), it follows that $b=0$ and $\Psi \Psi^* = cI$.

To get $c=0$, it suffices to show the last row $v \in \mathbb{F}_{q^2}^n$ of $\Psi$ satisfies $(v,v)=0$.
The columns $x = \left[ \begin{array}{c} y \\ \beta \end{array} \right]$ of $\Psi$ are in bijection with pairs $(y,\beta) \in \mathbb{F}_{q^2}^{d-1} \times \mathbb{F}_{q^2}$ such that $(y,y) = 1 - \beta^{q+1}$.
As such $(v,v) = \sum_{b\in \mathbb{F}_q^\times} n_b b$, where $n_b$ is the number of $y \in \mathbb{F}_{q^2}^{d-1}$ with $(y,y) = 1-b$.
For $b \neq 1$ this is the size of the unit sphere in $\mathbb{F}_{q^2}^{d-1}$, or $n_b = q^{d-2}[ q^{d-1} + (-1)^d]$.
For $b=1$ subtraction gives $n_1 = q^{2d-3} + (-1)^{d-1}( q^{d-1} - q^{d-2})$.
If $d > 2$ then every $n_b$ is divisible by $q$, so that $(v,v) = 0$.
On the other hand if $d=2$ then $q$ divides every $n_b - 1$, so that $(v,v) = \sum_{b\in \mathbb{F}_q^\times} b = 0$.
\end{example}

\begin{example}
In the real model, choose $a \in \mathbb{F}_q^\times$ and let $\Phi \in \mathbb{F}_q^{d \times n}$ consist of one vector from each pair $\{x,-x\}$ in $\mathbb{F}_q^d$ such that $(x,x) = a$.
Then $\Phi$ is an NTF, by an argument similar to that of Example~\ref{ex:max NTF}.
The frame constant may or may not be zero depending on $q,a,d$.
\end{example}

\section{Equiangular lines}
\label{sec: equiangular}

Next we develop the basic theory of equiangular lines over arbitrary fields.
In the real and complex case this is just the theory of equiangular lines.
Our main result (Theorem~\ref{thm: Gerzon}) is a generalization of Gerzon's bound.

Recall that $\mathbb{F}_0 \leq \mathbb{F}$ is the subfield fixed by $\sigma$.
In Case~O we have $\mathbb{F}_0 = \mathbb{F}_q = \mathbb{F}$, and in Case~U it is $\mathbb{F}_0 = \mathbb{F}_q \leq \mathbb{F}_{q^2} = \mathbb{F}$.

\begin{definition}
Given $a,b \in \mathbb{F}_0$, we say $\Phi = \{ \varphi_j \}_{j\in [n]}$ forms an \textbf{$(a,b)$-equiangular system} in $V$ if the following hold:
\begin{itemize}
\item[(i)]
$\langle \varphi_j, \varphi_j \rangle = a$ for every $j \in [n]$,
\item[(ii)]
$\langle \varphi_i, \varphi_j \rangle \langle \varphi_j, \varphi_i \rangle = b$ for every $i \neq j$ in $[n]$.
\end{itemize}
If this holds and $\varphi_j \neq 0$ for every $j \in [n]$, then $\mathscr{L} = \{ \operatorname{span} \varphi_j \}_{j\in [n]}$ forms a sequence of \textbf{equiangular lines}.
For $c \in \mathbb{F}_0$, $\Phi$ is an \textbf{$(a,b,c)$-equiangular tight frame}, or \textbf{$(a,b,c)$-ETF}, if the following hold in addition to (i)--(ii):
\begin{itemize}
\item[(iii)]
$\operatorname{span} \Phi = V$,
\item[(iv)]
$\Phi \Phi^\dagger = c I$.
\end{itemize}
In other words, an $(a,b,c)$-ETF is an $(a,b)$-equiangular system that is also a $c$-tight frame.
Equivalently, it is an $(a,c)$-NTF for which (ii) holds.
\end{definition}

In our general setting, we have the following version of Gerzon's bound~\cite{LS73}.
Our overall method of proof is the usual one, but the abstract setting presents a few subtleties to address.

\begin{theorem}[Gerzon's bound]
\label{thm: Gerzon}
Denote $k = \dim_{\mathbb{F}_0} \mathbb{F} \in \{1, 2\}$, depending on whether or not $\sigma$ is trivial.
Suppose $a^2 \neq b$.
Then there exists an $(a,b)$-equiangular system $\Phi$ of $n$ vectors in $V$ only if $n \leq d  + \tfrac{k}{2}(d^2-d)$.
If equality holds, then $\Phi \Phi^\dagger = c I$ for some $c \in \mathbb{F}_0$, where $c = 0$ if $a = 0$.
If equality holds in Case O or Case U (Definition~\ref{def: Case OU}), then $\Phi$ is an $(a,b,c)$-ETF.
\end{theorem}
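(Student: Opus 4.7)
The plan is to adapt the classical Gerzon argument by attaching to each $\varphi_j$ the rank-one self-adjoint operator $P_j := \varphi_j \varphi_j^\dagger \in \operatorname{End}(V)$. Self-adjointness is immediate from the definition of $\dagger$. The $P_j$ live in the space $\mathrm{H}(V) := \{ A \in \operatorname{End}(V) : A = A^\dagger \}$; because $(\alpha A)^\dagger = \alpha^\sigma A^\dagger$, scalar multiplication respects self-adjointness only along $\mathbb{F}_0$, so $\mathrm{H}(V)$ is naturally an $\mathbb{F}_0$-vector space. A matrix computation in any basis shows $\dim_{\mathbb{F}_0} \mathrm{H}(V) = N := d + \tfrac{k}{2}(d^2 - d)$ (symmetric $d\times d$ matrices when $k = 1$, Hermitian ones when $k = 2$). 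On $\mathrm{H}(V)$ I would use the symmetric $\mathbb{F}_0$-bilinear form $\mathrm{B}(A,B) := \operatorname{tr}(AB)$; the identity $\operatorname{tr}(X)^\sigma = \operatorname{tr}(X^\dagger)$ (checked in coordinates) ensures $\mathrm{B}(A,B) \in \mathbb{F}_0$ whenever $A, B$ are self-adjoint.

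A cyclic-trace calculation gives $\mathrm{B}(P_i, P_j) = \langle \varphi_i, \varphi_j \rangle \langle \varphi_j, \varphi_i \rangle$, so the Gram matrix of $\{P_j\}$ under $\mathrm{B}$ is
\[
G = (a^2 - b)\, I + b\, J,
\qquad
\det G = (a^2 - b)^{n-1} \bigl( a^2 + (n-1) b \bigr).
\]
The hypothesis $a^2 \neq b$ kills the first factor, so $G$ has rank at least $n - 1$, with rank exactly $n$ unless $a^2 + (n - 1) b = 0$. When $G$ has full rank, the $P_j$ are $\mathbb{F}_0$-linearly independent in $\mathrm{H}(V)$ and $n \leq N$ is immediate.

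The main obstacle is the corner case $a^2 + (n-1) b = 0$, where the only candidate $\mathbb{F}_0$-relation is $\sum_j P_j = 0$. To close this off I would note that $\sum_j P_j = \Phi \Phi^\dagger$ has trace $na$, so a nontrivial relation forces $na = 0$ in $\mathbb{F}$. If $a \neq 0$, this says $n = 0$ in $\mathbb{F}$, whence $a^2 + (n-1)b = a^2 - b$, contradicting $a^2 \neq b$. Hence the corner case forces $a = 0$; but then every $P_j$ has $\operatorname{tr}(P_j) = a = 0$, so $\operatorname{span}_{\mathbb{F}_0}\{P_j\}$ is contained in the codimension-one trace-zero hyperplane of $\mathrm{H}(V)$ (which is genuinely codimension one because $\operatorname{tr}(E_{11}) = 1$). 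This still gives $n - 1 \leq N - 1$, completing the bound in every case.

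For equality $n = N$, either $\{P_j\}$ is a basis of $\mathrm{H}(V)$ or (in the corner case) $\{P_j\}$ spans precisely the trace-zero hyperplane. In the basis branch, expand $I = \sum_j d_j P_j$ and pair against $P_i$: the equations $(a^2 - b) d_i + b \sum_j d_j = \operatorname{tr}(P_i) = a$ force all $d_i$ equal to a common $d$ satisfying $d[a^2 + (n-1)b] = a$; then $I = d \Phi \Phi^\dagger$ and $\Phi \Phi^\dagger = d^{-1} I$, with $d \neq 0$ (otherwise $I = 0$), so in particular $a \neq 0$. In the corner branch, $\Phi \Phi^\dagger = \sum_j P_j = 0 = 0 \cdot I$ and $a = 0$. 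Either way $\Phi \Phi^\dagger = c I$ with $c \in \mathbb{F}_0$, and $c = 0$ iff $a = 0$. For the $(a,b,c)$-ETF conclusion in Case~O or Case~U it remains to verify $\operatorname{span}\Phi = V$; otherwise let $U := \operatorname{span}\Phi$ and $R := U \cap U^\perp$, observe that no $\varphi_j$ lies in $R$ (else $a = b = 0$), and apply the bound just established in the nondegenerate quotient $U/R$ (whose dimension is strictly less than $d$) to conclude $n < N$, a contradiction.
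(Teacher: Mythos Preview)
Your argument for the bound and for $\Phi\Phi^\dagger = cI$ at equality follows the same outer-product strategy as the paper: both compute the trace-form Gram matrix $G = (a^2-b)I + bJ$ of the rank-one operators $P_j = \varphi_j\varphi_j^\dagger$, observe that $\operatorname{Ker}\{P_j\} \subseteq \operatorname{Ker} G \subseteq \operatorname{span}\{\mathbf{1}\}$, rule out a nontrivial relation when $a\neq 0$ via the trace $na$, and drop to the trace-zero hyperplane when $a=0$. Your expansion $I = \sum_j d_j P_j$ in the basis branch is a direct reformulation of the paper's linear-dependence analysis of the augmented system $\{P_1,\dots,P_n,I\}$ and its $(n{+}1)\times(n{+}1)$ Gram matrix.

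The one genuine difference is the spanning argument. The paper invokes the Gram-factorization theorems (Theorems~\ref{thm: factor U} and~\ref{thm: factor O}, which are specific to Cases~U and~O) to realize $\Phi^\dagger\Phi$ as the Gram matrix of a frame $\Psi$ in dimension $d' = \operatorname{rank}\Phi^\dagger\Phi$, and then reapplies the bound to $\Psi$ to force $d'=d$. You instead pass to the quotient $U/R$ of $U=\operatorname{span}\Phi$ by its radical $R = U\cap U^\perp$, note that the induced form there is nondegenerate so the bound applies, and derive a contradiction from $\dim(U/R) < d$. Your route is more self-contained (it does not need the factorization machinery) and in fact yields the spanning conclusion under the general hypotheses (F1)--(F3), not only in Cases~O and~U, so you have proved slightly more than the theorem states. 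The paper's route, by contrast, exhibits a concrete relationship between the spanning defect and the rank of the Gram matrix.
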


The case $a^2 = b$ is exceptional, and in the real or complex case it may only describe vectors chosen repeatedly from a single line, as a consequence of the condition for equality in Cauchy--Schwarz.
Stranger behavior can occur over finite fields, as demonstrated by Example~\ref{ex: equiangular isotropic} further below.

\begin{proof}
Throughout the proof we work in the $\mathbb{F}_0$-space $\mathscr{S}$ of linear operators $A \colon V \to V$ satisfying $A^\dagger = A$, and we equip $\mathscr{S}$ with the (possibly degenerate) symmetric $\mathbb{F}_0$-bilinear form $\langle A, B \rangle_F = \operatorname{tr}(AB)$.
For any choice of basis $e_1,\dotsc,e_d$ of $V$, the mapping $T \colon \mathscr{S} \to \mathbb{F}^{d\times d}$ given by $T(A) = \begin{bmatrix} \langle e_i, A e_j \rangle \end{bmatrix}$ is easily seen to be an $\mathbb{F}_0$-linear isomorphism of $\mathscr{S}$ onto the space of self-adjoint $d\times d$ matrices.
By counting entries on and above the diagonal, we deduce that $\dim_{\mathbb{F}_0} \mathscr{S} = d + \tfrac{k}{2}(d^2-d)$.
This gives an upper bound on the size of a linearly independent set in $\mathscr{S}$, which we will use to prove the theorem.

Let $\Phi = \{ \varphi_j \}_{j\in [n]}$ be an $(a,b)$-equiangular system in $V$.
For each $j\in [n]$ define the outer product $A_j = \varphi_j \varphi_j^\dagger \in \mathscr{S}$ by $A_j v = \langle \varphi_j, v \rangle \varphi_j$, and let $\mathscr{A} = \{ A_j \}_{j\in [n]}$.
We start by identifying linear dependencies in $\mathscr{A}$ from its Gram matrix.
For any $i,j \in [n]$,
\[
\langle A_i, A_j \rangle_F 
= \operatorname{tr}( \varphi_i^\dagger \varphi_j \varphi_j^\dagger \varphi_i  ) = \langle \varphi_i, \varphi_j \rangle \langle \varphi_j, \varphi_i \rangle
= \begin{cases} a^2, & \text{if } i = j; \\ b, & \text{if }i \neq j. \end{cases}
\]
Therefore $\mathscr{A}$ has Gram matrix $G: = \begin{bmatrix} \langle A_i, A_j \rangle_F \end{bmatrix} = b J_n + (a^2-b) I_n \in \mathbb{F}_0^{n \times n}$, where $J_n$ is the all-ones matrix.
Since $\langle \cdot, \cdot \rangle_F$ may be degenerate, we cannot always factor $G = \mathscr{A}^\dagger \mathscr{A}$ (in particular, $\mathscr{A}^\dagger$ may not be well defined); however it is still true that $\operatorname{Ker} \mathscr{A} \leq \operatorname{Ker} G$.
Furthermore, since $a^2 - b \neq 0$ we have $\operatorname{Ker} G \leq \operatorname{Im} J_n = \operatorname{span} \{ \mathbf{1}_n \}$, where $\mathbf{1}_n \in \mathbb{F}_0^n$ is the all-ones vector.
Equality holds only if $nb + (a^2-b) = 0$.
Therefore, $\operatorname{Ker} \mathscr{A} = \{0\}$ when $a^2 \neq -(n-1)b$, and $\operatorname{Ker} \mathscr{A} \leq \operatorname{span} \{ \mathbf{1}_n \}$ generally.

First assume $a \neq 0$.
Since $\mathscr{A}$ maps $\mathbb{F}_0^n$ into $\mathscr{S}$, and since $\operatorname{Ker} \mathscr{A} \leq \operatorname{span} \{ \mathbf{1}_n \}$, rank-nullity gives the bound
\[
\operatorname{dim}_{\mathbb{F}_0} \mathscr{S} \geq \dim_{\mathbb{F}_0} \operatorname{Im} \mathscr{A} = n - \operatorname{dim}_{\mathbb{F}_0} \operatorname{Ker} \mathscr{A} \geq n-1,
\]
that is, $n \leq d + \tfrac{k}{2}(d^2-d) + 1$.
Equality holds only if $a^2 = -(n-1)b$ and $\operatorname{Ker} \mathscr{A} = \operatorname{span} \{ \mathbf{1}_n \}$, that is, $\sum_{j\in [n]} \varphi_j \varphi_j^\dagger = 0$.
We claim this cannot happen.
Otherwise, the relation $\operatorname{tr}(\Phi^\dagger \Phi) = \operatorname{tr}(\Phi \Phi^\dagger)$ says that $na = 0$, while $\operatorname{char} \mathbb{F}$ cannot divide $n$ since $b \neq a^2 = -(n-1)b$.
Therefore $a = 0$, contrary to assumption.
It follows that $n \leq d + \tfrac{k}{2}(d^2-d)$.

Now assume $a = 0$.
Then $\operatorname{tr}(A_j) = \operatorname{tr}(\varphi_j^\dagger \varphi_j) = a = 0$ for each $j$, so that $\mathscr{A}$ lies in the subspace
\[
\mathscr{S}_0 = \{ A \in \mathscr{S} : \operatorname{tr}(A) = 0 \} \leq \mathscr{S}
\]
of traceless self-adjoint operators.
Notice that $\dim_{\mathbb{F}_0} \mathscr{S}_0 = \dim_{\mathbb{F}_0} \mathscr{S} - 1$ since the trace operator maps $\mathscr{S}$ linearly onto $\mathbb{F}_0$.
Proceeding as before, we find that
\[
\dim_{\mathbb{F}_0} \mathscr{S} - 1 = \dim_{\mathbb{F}_0} \mathscr{S}_0 \geq \dim_{\mathbb{F}_0} \operatorname{Im} \mathscr{A} = n - \dim_{\mathbb{F}_0} \operatorname{Ker} \mathscr{A} \geq n - 1,
\]
i.e., $n \leq \dim_{\mathbb{F}_0} \mathscr{S}$.
Equality holds only if $0 = \sum_{j\in [n]} \varphi_j \varphi_j^\dagger = \Phi \Phi^\dagger$.

Next consider the case of equality, $n = \dim_{\mathbb{F}_0} \mathscr{S}$, with $a \neq 0$.
Define $A_{n+1} = I_d$ and $\mathscr{B} = \{ A_j \}_{j\in [n+1]}$, which equals $\mathscr{A}$ appended by the identity matrix.
For any $j \leq n$ we have $\langle A_j, A_{n+1} \rangle_F = \operatorname{tr}(\varphi_i \varphi_i^\dagger I) = a$, so $\mathscr{B}$ has Gram matrix
\[
H := \begin{bmatrix} \langle A_i, A_j \rangle_F \end{bmatrix}_{i,j \in [n+1]} 
=
\left[ \begin{array}{cc}
bJ_n + (a^2-b)I_n & a \mathbf{1}_n \\
a \mathbf{1}_n^\top & d 
\end{array} \right]
\in \mathbb{F}_0^{(n+1)\times (n+1)}.
\]
Considering that $\mathscr{B}$ has $n+1 > \dim_{\mathbb{F}_0} \mathscr{S}$ vectors, we conclude it is linearly dependent.
Choose any nonzero $x = \{ x_i \}_{i\in [n+1]} \in \operatorname{Ker} \mathscr{B}$.
As above, we have $x \in \operatorname{Ker} H$.
For any choice of $i, j \leq n$, expanding matrix products in the equation $\begin{bmatrix} Hx \end{bmatrix}_i = 0 = \begin{bmatrix} Hx \end{bmatrix}_j$ shows that $a^2 x_i + b x_j = b x_i + a^2 x_j$,
or $(a^2 - b)x_i = (a^2 - b)x_j$.
Since $a^2 \neq b$ it follows that $x_i = x_j$, and $x = \left[ \begin{array}{cc} \alpha \mathbf{1}_n^\top & \beta \end{array} \right]^\top$ for some $\alpha, \beta \in \mathbb{F}_0$.
Furthermore, $\alpha \neq 0$ since $\begin{bmatrix} Hx \end{bmatrix}_1 = 0$ and $a \neq 0$.
Defining $c = -\beta/\alpha$, we conclude
\[ 0 = \alpha^{-1} \sum_{j\in [n]} x_j A_j + \alpha^{-1} x_{n+1} I_d = \sum_{j \in [n]} \varphi_j \varphi_j^\dagger - c I_d. \]
Therefore, $\Phi \Phi^\dagger = c I_d$ as desired. 

Finally, suppose that the bound is attained with $n =d + \tfrac{k}{2}(d^2 - d)$ in either Case~O or Case~U.
Then 
\begin{equation}
\label{eq: Gerz 1}
d \geq \operatorname{rank} \Phi \geq \operatorname{rank} \Phi^\dagger \Phi =: d'.
\end{equation}
By Theorem~\ref{thm: factor O} (Case O) or Theorem~\ref{thm: factor U} (Case U), there is a frame $\Psi$ of $n$ vectors in an orthogonal (Case O) or unitary (Case U) geometry on $\mathbb{F}^{d'}$, such that $\Psi^\dagger \Psi = \Phi^\dagger \Phi$.
Then $\Psi$ is an $(a,b)$-equiangular system in dimension $d'$, and considering the bound proved above we must have $d' = d$.
Therefore equality holds throughout~\eqref{eq: Gerz 1}, and $\Phi$ is a frame.
By the above it is an ETF.
\end{proof}

We now demonstrate the pathology of the case $a^2 = b$ for Gerzon's bound.

\begin{example}
\label{ex: equiangular isotropic}
For $d \geq 3$ there is no finite upper bound $f(d)$ on the size of an equiangular system in any space of dimension $d$ over any field.
We cannot even bound the number of distinct lines spanned by vectors in an equiangular system without accounting for the base field.
For example, a sequence of vectors forms a $(0,0)$-equiangular system if and only if they span a totally isotropic subspace.
When $d \geq 4$ is fixed and a prime power $q$ is allowed to vary, the number $N(d,q)$ of distinct lines in a maximal totally isotropic subspace of $\mathbb{F}_{q^2}^d$ under the complex model grows to infinity with $q$.
Hence there exist arbitrarily large $(0,0)$-equiangular systems in $d$-dimensional spaces (over various fields).
\end{example}

\begin{example}
\label{ex: equiangular isotropic 2}
More generally, let $a \in \mathbb{F}_0$ be such that there exists $\varphi_0 \in V$ with $\langle \varphi_0, \varphi_0 \rangle = a$.
Choose a totally isotropic subspace $W \leq \operatorname{span} \{ \varphi_0 \}^\perp$.
For $w \in W$ define $\varphi_w = \varphi_0 + w$.
Then $\Phi = \{ \varphi_w \}_{w \in W}$ is an $(a,a^2)$-equiangular system.
This gives very large examples.
\end{example}

\begin{example}
Not every $(a,a^2)$-equiangular system takes the form of the last example.
To see this, choose $d \geq 3$ and an odd prime power $q$, and consider the complex model on $V = \mathbb{F}_{q^2}^d$.
For any isotropic $x,y \in \mathbb{F}_{q^2}^{d-1}$ satisfying $(x,y) = -2$,
\[
\Phi = \left[ \begin{array}{ccc}
1 & 1 & 1 \\
0 & x & y
\end{array} \right]
\]
is a $(1,1)$-equiangular system that does not arise from the method of Example~\ref{ex: equiangular isotropic 2}.
\end{example}

In comparison with the real and complex cases, one might expect to find a relative bound on the size of an $(a,b)$-equiangular system in Case O or Case U that beats Gerzon when we know the values of $a,b$.
We leave this as an open problem.

\begin{problem}
\label{prob: relative bound}
In Case O and Case U (Definition~\ref{def: Case OU}), find a relative bound on the size of an $(a,b)$-equiangular system that outperforms Gerzon (Theorem~\ref{thm: Gerzon}).
\end{problem}

Despite not yet having a relative bound, we can relate the parameters of an $(a,b,c)$-ETF, as in Welch~\cite{W74}.

\begin{proposition}
\label{prop: Welch}
If $V$ admits an $(a,b,c)$-ETF of $n$ vectors, then
$a(c-a) = (n-1)b$.
When $\operatorname{char} \mathbb{F}$ fails to divide both $d$ and $n-1$, it follows that $b = \frac{ ( n - d ) }{d (n-1) }a^2$.
\end{proposition}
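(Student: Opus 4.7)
The plan is to extract both conclusions from a single operator identity, rather than from a trace computation. By Proposition~\ref{prop: tight frame}(ii), the $c$-tight frame condition on $\Phi$ is equivalent to $(\Phi^\dagger \Phi)^2 = c\,\Phi^\dagger\Phi$. I would simply read off the $(i,i)$-entry on each side and use the equiangular hypotheses to evaluate.

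On the left, the $(i,i)$-entry of $(\Phi^\dagger\Phi)^2$ is
\[
\sum_{j\in[n]} (\Phi^\dagger\Phi)_{ij}(\Phi^\dagger\Phi)_{ji}
= \sum_{j\in[n]} \langle \varphi_i,\varphi_j\rangle \langle \varphi_j,\varphi_i\rangle.
\]
The $j=i$ summand contributes $a^2$ (note $a^\sigma = a$ since $a\in\mathbb{F}_0$), while each of the $n-1$ off-diagonal summands contributes $b$, giving $a^2 + (n-1)b$. On the right, the $(i,i)$-entry of $c\,\Phi^\dagger\Phi$ equals $c\langle\varphi_i,\varphi_i\rangle = ca$. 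Equating, one obtains $ca = a^2 + (n-1)b$, i.e.\ $a(c-a) = (n-1)b$. This conclusion holds in every characteristic, since we extracted a single diagonal entry and never summed over $i$.

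For the second assertion, I would combine this with the NTF trace identity~\eqref{eq:nadc}, namely $na = cd$. Multiplying $a(c-a) = (n-1)b$ by $d$ and substituting $cd = na$ gives
\[
a(cd) - a^2 d \;=\; d(n-1)b
\quad\Longrightarrow\quad
a^2(n-d) \;=\; d(n-1)b.
\]
If $\operatorname{char}\mathbb{F}$ divides neither $d$ nor $n-1$, then $d(n-1)$ is invertible in $\mathbb{F}$ and one may solve for $b = \frac{n-d}{d(n-1)}\,a^2$.

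There is no serious obstacle here; the only point worth flagging is that a trace-based version of the first step produces the weaker equation $na(c-a) = n(n-1)b$, from which one cannot recover $a(c-a)=(n-1)b$ in characteristics dividing $n$. Reading off a diagonal entry rather than the trace avoids this loss and gives the stated identity unconditionally.
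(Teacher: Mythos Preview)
Your proof is correct and follows essentially the same approach as the paper's: both extract the $(i,i)$-entry of the identity $(\Phi^\dagger\Phi)^2 = c\,\Phi^\dagger\Phi$ and then invoke~\eqref{eq:nadc}. The only cosmetic difference is that the paper first shifts to $A = \Phi^\dagger\Phi - aI$ (so that $A$ has zero diagonal and $A^2 = (c-2a)A + a(c-a)I$) before reading off a diagonal entry, whereas you work with $\Phi^\dagger\Phi$ directly; your route is slightly more streamlined.
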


\begin{proof}
Let $\Phi = \{ \varphi_j \}_{j\in [n]}$ be an $(a,b,c)$-ETF in $V$, and consider the matrix $A = \Phi^\dagger \Phi - aI$, whose diagonal is zero.
Expanding $(\Phi^\dagger \Phi - aI)^2$ with the relation $(\Phi^\dagger \Phi)^2 = c \Phi^\dagger \Phi$ shows that $A^2 = (c-2a)A + a(c-a)I$, and in particular, $(A^2)_{ii} = a(c-a)$ for every $i \in [n]$.
On the other hand, when we compute the matrix product we find that $(A^2)_{ii} = \sum_{j\in [n]} A_{ij} A_{ji} = (n-1)b$ for every $i \in [n]$.
Comparing these expressions shows that $a(c-a) = (n-1)b$.
Finally, when $\operatorname{char} \mathbb{F}$ fails to divide $d$ and $n-1$, we can solve~\eqref{eq:nadc} to find $c = \tfrac{n}{d}a$, so that $b = \frac{ ( n - d ) }{d (n-1) }a^2$.
\end{proof}

As in the real and complex settings, ETFs over finite fields often come in Naimark complementary pairs.
More precisely, we have the following consequence of Proposition~\ref{prop: Naimark U} and Proposition~\ref{prop: Naimark O}.

\begin{proposition}
[Naimark complements of ETFs]
\label{prop: Naimark ETF}
~
\begin{itemize}
\item[(a)]
If there exists an $(a,b,c)$-ETF of $n$ vectors in a unitary geometry on $\mathbb{F}_{q^2}^d$ and $c \neq 0$, then there exists a $({c-a},b,c)$-ETF of $n$ vectors in a unitary geometry on $\mathbb{F}_{q^2}^{n-d}$.
\item[(b)]
If there exists an $(a,b,c)$-ETF of $n$ vectors in an orthogonal geometry on $\mathbb{F}_q^d$ and $c \neq 0$, then there exists a $({c-a},b,c)$-ETF of $n$ vectors in an orthogonal geometry on $\mathbb{F}_q^{n-d}$.
\end{itemize}
\end{proposition}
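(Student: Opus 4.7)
The plan is to apply the Naimark complement construction to the Gram matrix of the given ETF, and then read off the new ETF parameters directly from the entries of the complementary matrix. Parts (a) and (b) share the same skeleton, so I would argue them in parallel, invoking Proposition~\ref{prop: Naimark U} in Case~U and Proposition~\ref{prop: Naimark O} (with scalar parameter $1$) in Case~O.

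First I set up notation. Let $\Phi = \{\varphi_j\}_{j \in [n]}$ be the given $(a,b,c)$-ETF, with Gram matrix $G = \Phi^\dagger \Phi$. The ETF hypotheses say $G_{ii} = a$ for every $i$, $G_{ij}G_{ji} = b$ whenever $i \neq j$, and $G^2 = cG$ with $c \neq 0$. Put $H := cI - G$. In Case~U, Proposition~\ref{prop: Naimark U} produces a $c$-tight frame $\Psi$ with Gram matrix $H$ in some unitary space $W$ of dimension $n-d$, which is isometrically isomorphic to the complex model on $\mathbb{F}_{q^2}^{n-d}$. In Case~O, Proposition~\ref{prop: Naimark O} applied with its scalar parameter taken to be $1$ (to avoid collision with the ETF norm $a$) produces a $c$-tight frame $\Psi$ with Gram matrix $H$ in some quadratic space $W$ of dimension $n-d$, i.e., an orthogonal geometry on $\mathbb{F}_q^{n-d}$.

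It remains to verify that $\Psi$ is an $(c-a,b,c)$-ETF. The frame spans $W$ by construction, and it is $c$-tight by construction, so conditions (iii) and (iv) of the ETF definition are in hand. The diagonal entries of $H$ satisfy $H_{ii} = c - G_{ii} = c - a$, giving the uniform self-inner-product $c - a$ and hence condition (i). For the equiangularity condition (ii), observe that for $i \neq j$ we have $H_{ij} = -G_{ij}$ and $H_{ji} = -G_{ji}$, so
\[
\langle \psi_i, \psi_j \rangle \langle \psi_j, \psi_i \rangle = H_{ij} H_{ji} = G_{ij} G_{ji} = b.
\]
Thus $\Psi$ is an $(c-a,b,c)$-ETF of $n$ vectors in the required geometry.

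No real obstacle is expected; the argument is essentially bookkeeping on top of the two Naimark propositions. The only small subtlety is the notational clash between the ETF norm $a$ and the scalar multiplier called $a$ in Proposition~\ref{prop: Naimark O}, which is resolved by specializing that scalar to $1$.
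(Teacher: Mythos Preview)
Your proof is correct and matches the paper's approach exactly: the paper states this proposition as an immediate consequence of Propositions~\ref{prop: Naimark U} and~\ref{prop: Naimark O}, and you have simply written out the bookkeeping (reading the parameters $c-a$ and $b$ off the diagonal and off-diagonal entries of $cI-G$) that the paper leaves implicit. Your handling of the notational clash in Case~O by specializing the scalar in Proposition~\ref{prop: Naimark O} to $1$ is the intended reading.
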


We emphasize that $c \neq 0$ in Proposition~\ref{prop: Naimark ETF}, and that the orthogonal geometries in Proposition~\ref{prop: Naimark ETF}(b) may have different discriminants.

\part{ETFs in unitary geometry}
\label{part:2}

For the remainder of the paper, we focus on ETFs in finite unitary geometries.
ETFs in finite orthogonal geometries are the subject of the companion paper~\cite{FFF2}.

\section{First examples}
\label{sec: first ex}

In this section we demonstrate some constructions of ETFs in unitary geometries, focusing especially on those derived from modular difference sets.
We have not investigated finite field analogs of other sources of complex ETFs, such as Steiner systems~\cite{FMT12},  hyperovals~\cite{FJM16}, graph coverings~\cite{CGSZ16,FJMPW17,IM:DTI}, the Tremain construction~\cite{FJMP18}, association schemes~\cite{DGS77,IJM:NAG}, or Gelfand pairs~\cite{IJM:FGA}.
We leave these topics for future research.

\begin{example}
Every ETF in a finite orthogonal geometry produces one in a finite unitary geometry, as we now explain.
If $\Phi$ is an $(a,b,c)$-ETF of $n$ vectors in an orthogonal geometry on $\mathbb{F}_q^d$, then its Gram matrix $G$ may be viewed as an element of $\mathbb{F}_{q^2}^{n\times n}$ with $\operatorname{rank}_{\mathbb{F}_{q^2}} G = \operatorname{rank}_{\mathbb{F}_q} G = d$.
By Theorem~\ref{thm: factor U}, there is a frame $\Psi$ of $n$ vectors in a unitary geometry on $\mathbb{F}_{q^2}^d$ having $G$ as its Gram matrix.
Considering the entries of $G$ and the fact that $G^2 = cG$, we conclude that $\Psi$ is an $(a,b,c)$-ETF in a unitary geometry.
This creates a large number of examples, which are explored more fully in~\cite{FFF2}.
\end{example}

Next we show three ETFs in unitary geometries having unusual sizes.
The authors discovered Examples~\ref{ex:5x16},~\ref{ex:6x28},~\ref{ex:6x27} while searching for large $(0,1)$-equiangular systems, using the clique method described by Lemma~6.2 of~\cite{FFF2}.
This amounts to a computationally efficient way to find a maximum clique in the graph whose vertices are isotropic vectors in $\mathbb{F}_{q^2}^d$ under the complex model, with vertices $u$ and $v$ adjacent precisely when $(u,v)^{q+1}=1$.

\begin{example}
\label{ex:5x16}
Take $q=3$ and $\zeta \in \mathbb{F}_{3^2}^\times$ as a primitive element.
Then the following is a $(0,1,0)$-ETF of size $5 \times 16$ in the complex model on $\mathbb{F}_{3^2}^5$:
\[
\Phi =
\left[ \begin{array}{cccccccccccccccc}
1 & 2 & 0 & 0 & 0 & 0 & 0 & 0 & \zeta & \zeta & \zeta & \zeta & \zeta^3 & \zeta^3 & \zeta^3 & \zeta^3 \\
\zeta & \zeta & \zeta^5 & \zeta^5 & \zeta^5 & \zeta^5 & \zeta^5 & \zeta^5 & 1 & 1 & 1 & 1 & \zeta^6 & \zeta^6 & \zeta^6 & \zeta^6 \\
0 & 0 & 0 & 0 & 0 & 0 & 1 & 2 & \zeta & \zeta & \zeta^5 & \zeta^5 & \zeta^3 & \zeta^3 & \zeta^7 & \zeta^7 \\
0 & 0 & 0 & 0 & 1 & 2 & 0 & 0 & \zeta & \zeta^5 & \zeta & \zeta^5 & \zeta^3 & \zeta^7 & \zeta^3 & \zeta^7 \\
0 & 0 & \zeta^2 & \zeta^6 & 0 & 0 & 0 & 0 & \zeta^7 & \zeta^3 & \zeta^3 & \zeta^7 & \zeta & \zeta^5 & \zeta^5 & \zeta
\end{array} \right].
\]
\end{example}

\begin{example}
\label{ex:6x28}
Take $q=3$ and $\zeta \in \mathbb{F}_{3^2}^\times$ as a primitive element.
Then the following is a $(0,1,0)$-ETF of size $6 \times 28$ in the complex model on $\mathbb{F}_{3^2}^{28}$:
\begin{center}
\resizebox{\textwidth}{!}{
$
\displaystyle
\Phi =
\left[ \begin{array}{cccccccccccccccccccccccccccc}
1 & 2 & 0 & 0 & 0 & 0 & 0 & 0 & 0 & 0 & 1 & 1 & \zeta & \zeta & \zeta & \zeta & \zeta & \zeta & \zeta & \zeta & \zeta^3 & \zeta^3 & \zeta^3 & \zeta^3 & \zeta^3 & \zeta^3 & \zeta^3 & \zeta^3 \\
\zeta & \zeta & \zeta^5 & \zeta^5 & \zeta^5 & \zeta^5 & \zeta^5 & \zeta^5 & \zeta^5 & \zeta^5 & 0 & 0 & 1 & 1 & 1 & 1 & 1 & 1 & 1 & 1 & \zeta^6 & \zeta^6 & \zeta^6 & \zeta^6 & \zeta^6 & \zeta^6 & \zeta^6 & \zeta^6 \\
0 & 0 & 0 & 0 & 1 & \zeta^5 & \zeta^6 & \zeta^6 & \zeta^6 & \zeta^7 & 1 & 2 & 0 & 2 & 2 & \zeta & \zeta & \zeta^2 & \zeta^6 & \zeta^7 & 0 & 1 & 2 & \zeta^2 & \zeta^2 & \zeta^5 & \zeta^7 & \zeta^7 \\
0 & 0 & \zeta & \zeta^3 & \zeta^5 & \zeta^2 & 0 & \zeta^5 & \zeta^6 & \zeta^6 & \zeta^7 & \zeta^3 & 0 & \zeta^2 & \zeta^5 & 0 & \zeta^2 & \zeta^7 & \zeta^2 & 1 & \zeta^5 & \zeta & \zeta & \zeta^2 & \zeta^7 & 1 & \zeta & \zeta^6 \\
0 & 0 & 0 & 1 & \zeta & 2 & \zeta^3 & \zeta^3 & 2 & \zeta^7 & \zeta & \zeta^5 & 1 & \zeta^5 & \zeta & \zeta^7 & \zeta & \zeta & \zeta^2 & \zeta^6 & \zeta & 0 & \zeta^3 & 0 & \zeta^6 & \zeta^6 & 2 & \zeta^6 \\
0 & 0 & \zeta^3 & 1 & \zeta & 0 & 2 & \zeta & 1 & \zeta^3 & 0 & 0 & \zeta^3 & \zeta^3 & \zeta^2 & \zeta^5 & \zeta^6 & \zeta^2 & 0 & \zeta^5 & \zeta^7 & 0 & \zeta^6 & \zeta^2 & \zeta^7 & \zeta & \zeta^6 & \zeta
\end{array} \right].
$
}
\end{center}
\end{example}

\begin{example}
\label{ex:6x27}
Take $q=2$ and $\zeta \in \mathbb{F}_{2^2}^\times$ as a primitive element.
Then the following is a $(0,1,1)$-ETF of size $6 \times 27$ in the complex model on $\mathbb{F}_{2^2}^{27}$:
\begin{center}
\resizebox{\textwidth}{!}{
$
\displaystyle
\Phi =
\left[ \begin{array}{ccccccccccccccccccccccccccc}
1 & \zeta & 0 & 0 & 0 & 0 & 0 & 0 & 1 & 1 & 1 & 1 & 1 & 1 & 1 & 1 & 1 & 1 & 1 & 1 & \zeta & \zeta & \zeta & \zeta & \zeta & \zeta & \zeta \\
\zeta & \zeta & \zeta & \zeta & \zeta & \zeta & \zeta & \zeta & 0 & 0 & 0 & 0 & 0 & 0 & 0 & 0 & 0 & 0 & 0 & 0 & 1 & 1 & 1 & 1 & 1 & 1 & 1 \\
0 & 0 & 0 & 0 & 0 & 0 & 0 & 0 & 0 & 0 & 0 & 0 & 0 & 0 & 1 & 1 & \zeta & \zeta & \zeta^2 & \zeta^2 & 0 & 0 & 0 & 0 & 1 & \zeta & \zeta^2 \\
0 & 0 & 1 & 1 & 1 & \zeta & \zeta^2 & \zeta^2 & 1 & 1 & \zeta & \zeta^2 & \zeta^2 & \zeta^2 & \zeta^2 & \zeta^2 & \zeta^2 & \zeta^2 & \zeta^2 & \zeta^2 & 0 & 0 & \zeta & \zeta & \zeta & \zeta & \zeta \\
0 & 0 & 1 & \zeta & \zeta & 0 & 1 & \zeta^2 & \zeta & \zeta^2 & 0 & 1 & 1 & \zeta & 0 & \zeta & 0 & \zeta & 0 & \zeta & 1 & \zeta & 0 & \zeta^2 & \zeta & \zeta & \zeta \\
0 & 0 & \zeta & \zeta & 1 & 0 & \zeta^2 & 1 & \zeta^2 & \zeta & 0 & \zeta & 1 & 1 & \zeta & 0 & \zeta & 0 & \zeta & 0 & \zeta & 1 & \zeta^2 & 0 & \zeta & \zeta & \zeta
\end{array} \right].
$
}
\end{center}
\end{example}

\smallskip

\subsection{ETFs from modular difference sets}

Next we show how ETFs in the complex model can be constructed from the following generalization of difference sets.

\begin{definition}[\cite{Ma74}]
\label{def: mds} 
Let \(k,n\in\mathbb{N}\). A set \(D\subseteq \mathbb{Z}/n\mathbb{Z}\) is called a \textbf{\(k\)-modular difference set} if the function \(c\colon \mathbb{Z}/n\mathbb{Z} \to \mathbb{Z}/k\mathbb{Z}\) given by
\[c(g) = |\{(a,b)\in D^{2} : a-b=g\}| \mod k\]
is constant on $(\mathbb{Z}/n\mathbb{Z})\setminus \{0\}$.
\end{definition}

In order to convert modular difference sets to ETFs, we now define the discrete Fourier transform matrix over a finite field. 
For simplicity we will only consider the Fourier transform over finite cyclic groups, rather than the more general case of finite abelian groups.

\begin{definition}
\label{def: harmframe}
Let \(q\) be a power of the prime \(p\), and let \(\alpha\) be a generator of the multiplicative group of \(\mathbb{F}_{q^2}\). 
Given \(n\in\mathbb{N}\) such that \(n\mid q+1\), set \(\omega = \alpha^{(q^2-1)/n}\). 
Define the \(n\times n\) discrete Fourier transform (DFT) matrix over \(\mathbb{F}_{q^{2}}\) by
\[\mathcal{F} = \begin{bmatrix} \omega^{ij} \end{bmatrix}_{i,j\in\mathbb{Z}/n\mathbb{Z}}.\]
For \(D\subset \mathbb{Z}/n\mathbb{Z}\) we define the \(|D|\times n\) submatrix
\[\mathcal{F}_{D} = \begin{bmatrix} \omega^{ij} \end{bmatrix}_{i\in D,j\in\mathbb{Z}/n\mathbb{Z}}.\]
\end{definition}

Using the notation of Definition \ref{def: harmframe}, the next theorem shows that the matrix \(\mathcal{F}_{D}\) is an ETF if and only if \(D\) is a \(p\)-modular difference set. 
However, we wish to emphasize that the assumption \(n\mid q+1\) is essential. 
Indeed, there are \(p\)-modular difference sets that do not give rise to ETFs over a finite field because they fail to satisfy this condition.
The following theorem generalizes a construction of complex ETFs due to Strohmer and Heath~\cite{SH03,XZG05,DF07}.

\begin{theorem}
\label{thm: harm diff}
Suppose $q$ is a prime power, and $n \in \mathbb{N}$ satisfies $n \mid q+1$.
Given $D \subseteq \mathbb{Z}/n\mathbb{Z}$, the matrix \(\mathcal{F}_{D}\) of Definition~\ref{def: harmframe} is an ETF if and only if \(D\) is a \(p\)-modular difference set.
\end{theorem}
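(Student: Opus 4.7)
The plan is to show that the tight-frame and equal-norm conditions hold for $\mathcal{F}_D$ no matter what $D$ is, and then characterize equiangularity via Fourier inversion on $\mathbb{Z}/n\mathbb{Z}$.

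First I would record two consequences of the hypothesis $n \mid q+1$: the primitive $n$-th root $\omega$ lies in the subgroup $\mathbb{T}_q \leq \mathbb{F}_{q^2}^\times$, so $\omega^\sigma = \omega^{-1}$, and $\gcd(n,p) = 1$ (since $p \mid q$ forces $p \nmid q+1$), which will make both $n$ and the Fourier transform invertible in $\mathbb{F}_{q^2}$. Writing the columns of $\mathcal{F}_D$ as $\varphi_j$, a direct expansion gives $(\varphi_j, \varphi_k) = \sum_{i \in D} \omega^{i(k-j)}$; setting $j=k$ shows each $\varphi_j$ has self-inner-product $|D|$. The identity $\mathcal{F} \mathcal{F}^* = nI$ from Example~\ref{ex:DFT}, restricted to the rows and columns indexed by $D$, gives $\mathcal{F}_D \mathcal{F}_D^\dagger = nI$, which is invertible. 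Hence $\mathcal{F}_D$ is automatically an $n$-tight frame with equal self-inner-product $|D|$, regardless of $D$.

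The remaining issue is equiangularity. Using $\omega^\sigma = \omega^{-1}$, I would compute
\[
(\varphi_j, \varphi_k)(\varphi_k, \varphi_j) = \sum_{(a,b) \in D^2} \omega^{(a-b)(k-j)} = \sum_{m \in \mathbb{Z}/n\mathbb{Z}} N(m)\, \omega^{m(k-j)},
\]
where $N(m) = |\{(a,b) \in D^2 : a-b=m\}|$ is viewed through the natural embedding $\mathbb{Z} \to \mathbb{F}_p \subseteq \mathbb{F}_{q^2}$. Equiangularity is therefore equivalent to the statement that the discrete Fourier transform $\widehat{N}(g) := \sum_m N(m)\,\omega^{mg}$ is constant as $g$ ranges over $(\mathbb{Z}/n\mathbb{Z}) \setminus \{0\}$.

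What remains, and is really routine rather than a serious obstacle, is a Fourier-inversion argument. In one direction, if $N(m) \equiv c_0 \pmod p$ for all $m \neq 0$, then summing the geometric series $\sum_{m \neq 0} \omega^{mg} = -1$ for $g \neq 0$ gives $\widehat{N}(g) = |D| - c_0$, independent of $g$. In the other direction, because $\gcd(n,p)=1$ the DFT is an invertible $\mathbb{F}_{q^2}$-linear map on $\mathbb{F}_{q^2}$-valued functions of $\mathbb{Z}/n\mathbb{Z}$; applying the inverse DFT to a $\widehat{N}$ that is constant off $0$ pins $N(m)$ down to a single value in $\mathbb{F}_p$ for every $m \neq 0$. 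Either implication yields the $p$-modular difference set condition of Definition~\ref{def: mds}, completing the equivalence.
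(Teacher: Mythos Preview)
Your proposal is correct and follows essentially the same route as the paper: both arguments first verify that $\mathcal{F}_D$ is automatically an $n$-tight frame with equal self-inner-products, then express $(\varphi_j,\varphi_k)(\varphi_k,\varphi_j)$ as the discrete Fourier transform of the difference-count function, and finally use the invertibility of $\mathcal{F}$ (equivalently, Fourier inversion) to identify equiangularity with constancy of that count modulo $p$. The only cosmetic difference is notation---the paper writes the count vector as $\mathbf{c}$ and the key identity as $[\mathcal{F}\mathbf{c}]_{j-i}$, whereas you write $N$ and $\widehat{N}$.
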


\begin{proof} 
Let \(D = \{ a_{1}, a_{2}, \dotsc, a_{d}\}\subseteq \mathbb{Z}/n\mathbb{Z} \).
As in Definition~\ref{def: mds}, we let $c \colon \mathbb{Z}/n\mathbb{Z} \to \mathbb{F}_q$ be defined by
\[
c(g) = | \{ (a,b) \in D^2 : a - b = g \}| \mod p,
\]
and we also consider $c$ in vector form as
\(\mathbf{c} = \left[\begin{array}{ccccc} c({0}) & \cdots & c({n-1})\end{array}\right]^{\top}\in\mathbb{F}_{q^{2}}^{n}.\)
For each \(j\in \mathbb{Z}/n\mathbb{Z} \), let \(\varphi_{j}\) denote the \(j\)th column of \(\mathcal{F}_{D}\), that is, 
\[\varphi_{j} = \left[\begin{array}{ccccc} \omega^{a_{1}j} & \omega^{a_{2}j} & \omega^{a_{3}j} & \cdots & \omega^{a_{d}j}\end{array}\right]^{\top}.\]

A simple calculation shows that
\[\mathcal{F}_{D}\mathcal{F}_{D}^{\ast} = n I.\]
Since \(n\mid q+1\), we see that \(p\nmid n\) and hence \(n\neq 0\). 
This implies that \(\mathcal{F}_{D}\) is an \(n\)-tight frame which is not totally isotropic. 
Since the entries in \(\mathcal{F}_{D}\) are unimodular, we see that \((\varphi_{i},\varphi_{i}) = d = |D|\). 
Thus, for any set \(D\subseteq \mathbb{Z}/n\mathbb{Z} \), the matrix \(\mathcal{F}_{D}\) is a \((|D|,n)\)-NTF.

Next, note that
\begin{equation}
\label{eq: ds1}
\begin{split}
(\varphi_{i},\varphi_{j})(\varphi_{j},\varphi_{i}) &= \operatorname{tr}(\varphi_{i}^{\ast}\varphi_{j}\varphi_{j}^{\ast}\varphi_{i}) = \operatorname{tr}(\varphi_{i}\varphi_{i}^{\ast}\varphi_{j}\varphi_{j}^{\ast}) = \sum_{k=1}^{d}\sum_{l=1}^{d}\omega^{(j-i)(a_{k}-a_{l})}\\
 & = \sum_{g\in\mathbb{Z}/n\mathbb{Z}}c({g}) \omega^{(j-i)g} = [\mathcal{F}\mathbf{c}]_{j-i}.
\end{split}
\end{equation}

Assume \(D\) is a \(p\)-modular difference set.
That is, there is a number \(\lambda\in{\{0,\ldots,p-1\}}\) such that \(c({g}) = \lambda\ (\mathrm{mod}\ p)\) for all \(g\in(\mathbb{Z}/n\mathbb{Z})\setminus\{0\}\). 
Now, for \(i\neq j\), from \eqref{eq: ds1} we see that
\[(\varphi_{i},\varphi_{j})(\varphi_{j},\varphi_{i}) = \sum_{g\in (\mathbb{Z}/n\mathbb{Z}) \setminus\{0\}}c({g})\omega^{(j-i)g} + d = \lambda\sum_{g\in(\mathbb{Z}/n\mathbb{Z})\setminus\{0\}}\omega^{(j-i)g} + d = -\lambda+d.\]
Therefore, \(\{\varphi_{i}\}_{i\in\mathbb{Z}/n\mathbb{Z}}\) is a \((|D|,|D|-\lambda,n)\)-ETF.

Finally, we assume \(\mathcal{F}_{D}\) is an ETF. 
From \eqref{eq: ds1} we can deduce that there is a constant \(\alpha\in\mathbb{F}_{q^{2}}\) such that
\[\mathcal{F}\mathbf{c} = \left[\begin{array}{ccccc} n^2 & \alpha & \alpha & \cdots & \alpha\end{array}\right]^{\top}.\]
Since \(\mathcal{F}^{\ast}\mathcal{F} = nI\neq 0\), we see that 
\[\mathbf{c} = n^{-1}\mathcal{F}^{\ast}\mathcal{F}\mathbf{c} = n^{-1}\left[\begin{array}{ccccc} n^2-(n-1)\alpha & n^2-\alpha & n^2-\alpha & \cdots & n^2-\alpha\end{array}\right]^{\top}.\]
Thus, \(D\) is a \(p\)-modular difference set.
\end{proof}

\begin{example}
\label{ex:harm7x14}
For $n = 14$, the reader can check that $D = \{ 0, 4, 6, 7, 8, 11, 13 \}$ is a $3$-modular difference set.
Since $n \mid 27 + 1$, this produces a $7\times 14$ ETF in a unitary geometry on $\mathbb{F}_{27^2}$.
A complex ETF of this size is known to exist, but it cannot be harmonic~\cite{FM:T}.
Indeed, there is no difference set of $d=7$ elements in a group of $n=14$ elements since $n-1$ fails to divide $d(d-1)$~\cite{JPS07}.
\end{example}

\begin{example}
[Example~5.2 of~\cite{Ma74}]
Let \(k\in\mathbb{N}\) such that \(p=3k-1\) is prime, and set \(q=p^3\). 
Let \(H\) be a subgroup of \(\mathbb{Z}/9k\mathbb{Z}\) such that \(|H|=3k\). 
We claim that the set \(D=H\cup\{1\}\) is a \(p\)-modular difference set. 
Indeed, consider the function \(c\) from Definition \ref{def: mds}. 
One can easily deduce that
\[c(g) = \begin{cases} 3k, & \text{if }g\in H\setminus\{0\};\\ 1, & \text{if }g\in (\mathbb{Z}/9k\mathbb{Z})\setminus H.\end{cases}\]
Since \(3k=1\ (\mathrm{mod}\ p)\), this shows that \(D\) is a \(p\)-modular difference set. 
Additionally, we see that \(q+1 = (3k-1)^3+1 = 0\ (\mathrm{mod}\ 9k)\), and hence \(9k\mid q+1.\) 
By the above theorem, the matrix \(\mathcal{F}_{D}\) is a \((3k+1)\times 9k\) ETF over \(\mathbb{F}_{q^{2}}\). 
Since there are infinitely many \(k\) such that \(3k-1\) is prime, this gives rise to an infinite family of ETFs. 
In particular, when \(k=2\) this construction produces a \(7\times 18\) ETF over \(\mathbb{F}_{125^2}\). 
Note that there is no known construction of a complex ETF of this size, and there cannot be a $7\times 18$ complex harmonic ETF for the same reason as in Example~\ref{ex:harm7x14}~\cite{FM:T}. 
\end{example}

\section{ETFs from translation and modulation}
\label{sec: Gabor}

In this section we introduce translation and modulation operators over finite fields, and show that they can be used to create NTFs just as in the complex setting.
By identifying an appropriate fiducial vector, we show that Gerzon's bound is attained in unitary geometries of every dimension $d = 2^{2l+1}$ over the field $\mathbb{F}_{3^2}$.

\subsection{NTFs from translation and modulation}
Fix a prime power $q$, and choose integers $d_1, \dotsc, d_m \geq 2$ that all divide $q+1$.
In this section we consider a finite field version of the Heisenberg group over $G := \prod_{k=1}^m \mathbb{Z}/d_k\mathbb{Z}$, where $|G| = \prod_{k=1}^m d_k =: d$.
This presents a finite model in which to investigate Zauner's conjecture.
We work in the unitary space $V =\mathbb{F}_{q^2}^G$ of functions $\varphi \colon G \to \mathbb{F}_{q^2}$, equipped with the form
\[
\langle \varphi, \psi \rangle = \sum_{x\in G} \varphi(x)^q \psi(x).
\]
Then $V$ has orthonormal basis $\{ \delta_x \}_{x\in G}$, where $\delta_x \in \mathbb{F}_{q^2}^d$ is the indicator function of a point, with $\delta_x(x) = 1$ and $\delta_x(y) = 0$ for $y \neq x$.

In order to define modulation, we first introduce notation for $\mathbb{T}_q$-valued characters on $G$.
For each $k \in [m]$, we fix a generator $\omega_k \in \mathbb{T}_q \leq \mathbb{F}_{q^2}^\times$ for the unique subgroup of order $d_k$.
Given $x_k, y_k \in\mathbb{Z}/d_k\mathbb{Z}$ we denote $\hat{y_k}(x_k) = \omega_k^{x_k y_k} = \hat{x_k}(y_k)$, and for $x=(x_1,\dotsc,x_m)$ and $y=(y_1,\dotsc,y_m)$ in $G$ we define $\hat{y}(x) = \prod_{k=1}^m \hat{y_k}(x_k) \in \mathbb{T}_q$.
Then
\begin{equation}
\label{eq: char rels}
\hat{y}(x) = \hat{x}(y) \quad \text{and} \quad (x+y)\hat{\,}(z) = \hat{x}(z) \hat{y}(z), \qquad \text{for }x,y,z\in G,
\end{equation}
and $y = 0$ if and only if $\hat{y}(x) = 0$ for every $x \in G$.
Furthermore, for any $x \in G$
\[
\hat{y}(x) \sum_{z\in G} \hat{y}(z) = \sum_{z\in G} \hat{y}(x + z) = \sum_{z\in G} \hat{y}(z),
\]
that is, $\bigl(\hat{y}(x) - 1\bigr) \sum_{z \in G} \hat{y}(z) = 0$.
It follows that
\begin{equation}
\label{eq: character sum}
\sum_{z\in G} \hat{y}(z) = 
\begin{cases}
d, & \text{if } y = 0; \\
0, & \text{otherwise.}
\end{cases}
\end{equation}

In this notation, each $y \in G$ determines a modulation operator $M_y \in \operatorname{U}( \mathbb{F}_{q^2}^G )$ and a translation operator $T_y \in \operatorname{U}( \mathbb{F}_{q^2}^G )$ given by 
\[
(M_y \varphi)(x) = \hat{y}(x) \varphi(x) \quad \text{and} \quad (T_y \varphi)(x) = \varphi(x-y), \qquad \text{for }\varphi \in \mathbb{F}_{q^2}^G,\, x \in G.
\]
It is straightforward to verify the usual relations
\begin{equation}
\label{eq: TM rels}
T_x T_y = T_{x+y},
\quad
M_x M_y = M_{x+y},
\quad 
M_y T_x = \hat{y}(x) T_x M_y,
\qquad
\text{for }x,y \in G.
\end{equation}

Our immediate goal is the following.

\begin{proposition}
\label{prop: Gabor tight frame}
For any nonzero $\varphi \in \mathbb{F}_{q^2}^G$, the collection $\Phi = \{ T_x M_y \varphi \}_{x,y \in G}$ is an $(a,da)$-NTF for $\mathbb{F}_{q^2}^G$, where $a = \langle \varphi, \varphi \rangle$.
\end{proposition}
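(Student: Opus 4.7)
The plan is to verify each of the three conditions in the definition of an $(a,da)$-NTF for the family $\Phi = \{T_x M_y \varphi\}_{(x,y) \in G \times G}$: equal norms $a$, spanning of $\mathbb{F}_{q^2}^G$, and tightness with constant $da$. The equal-norm condition is immediate, since $T_x, M_y \in \operatorname{U}(\mathbb{F}_{q^2}^G)$ preserve the form, yielding $\langle T_x M_y \varphi, T_x M_y \varphi \rangle = \langle \varphi, \varphi \rangle = a$ for every $(x, y)$.

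For the tightness identity $\Phi \Phi^\dagger = da \cdot I$, I would compute the matrix of $\Phi \Phi^\dagger$ in the orthonormal basis $\{\delta_z\}_{z \in G}$ directly. Expanding $(T_x M_y \varphi)(w) = \hat{y}(w-x)\varphi(w-x)$ and using that $\hat{y}(u) \in \mathbb{T}_q$ (so $\hat{y}(u)^\sigma = \hat{y}(u)^{-1}$) together with the multiplicativity in \eqref{eq: char rels}, the $(z, z')$ matrix entry should collapse to
\[
\Bigl(\sum_{y \in G} \hat{y}(z - z')\Bigr)\sum_{x \in G} \varphi(z' - x)^\sigma \varphi(z - x).
\]
Summing over $y$ first, \eqref{eq: character sum} kills all off-diagonal entries; on the diagonal the $y$-sum contributes $d$, and the $x$-sum (after re-indexing) equals $\langle \varphi, \varphi \rangle = a$. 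This yields the desired identity $\Phi \Phi^\dagger = da \cdot I$.

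Finally, I must show that $W := \operatorname{span}(\Phi) = \mathbb{F}_{q^2}^G$, which is necessary for $\Phi$ to qualify as a frame — especially in the isotropic case $a = 0$, where the tightness equation $\Phi\Phi^\dagger = 0$ does not by itself force spanning. The relations \eqref{eq: TM rels} show that $W$ is invariant under every $T_x$ and $M_y$. Since $\varphi \neq 0$, some $\varphi(z_0)$ is nonzero, so $\psi := T_{-z_0} \varphi \in W$ satisfies $\psi(0) \neq 0$. A third application of \eqref{eq: character sum} then gives $\sum_{y \in G} M_y \psi = d\, \psi(0)\, \delta_0 \in W$, and because $d \mid q + 1$ is coprime to the characteristic, I can divide by $d\, \psi(0)$ to conclude $\delta_0 \in W$; translating by $T_x$ puts every $\delta_x$ in $W$, so $W = \mathbb{F}_{q^2}^G$.

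No step looks particularly delicate: the only real care required is tracking the Frobenius $\sigma$ on the characters $\hat y$ (so that the conjugation of two modulation factors produces a single character $\hat{y}(z - z')$), and remembering that the spanning condition needs its own argument in the degenerate case $a = 0$.
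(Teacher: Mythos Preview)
Your proof is correct, modulo one small slip: in the spanning argument you write ``$d \mid q+1$,'' but in the paper's setup $d = \prod_k d_k$ with each $d_k \mid q+1$, so $d$ itself need not divide $q+1$ when $m > 1$. What you actually need (and what holds) is that $d$ is a unit in $\mathbb{F}_{q^2}$, which follows since each $d_k$ is coprime to the characteristic.

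Your route is genuinely different from the paper's. The paper first proves a lemma that $\{T_x M_y\}_{x,y \in G}$ is an orthogonal basis for the operator space $\operatorname{HS}(\mathbb{F}_{q^2}^G)$ under the trace form, yielding the reconstruction identity $\sum_{x,y} \langle T_x M_y, A\rangle_F\, T_x M_y = dA$; applying this to the rank-one operator $A = \psi\varphi^\dagger$ gives both tightness and spanning at once (the latter because $\psi\varphi^\dagger \neq 0$ forces some $\langle T_x M_y \varphi, \psi\rangle \neq 0$). You instead compute the matrix of $\Phi\Phi^\dagger$ directly in the standard basis and handle spanning by an invariance-plus-averaging argument that extracts $\delta_0$. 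Your approach is more elementary and self-contained; the paper's approach is more conceptual and makes the underlying representation-theoretic structure (irreducibility of the Heisenberg action) explicit, which the authors remark on immediately after the proposition.
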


The NTF $\Phi$ of Proposition~\ref{prop: Gabor tight frame} is known as a \textbf{Gabor frame}, and $\varphi$ is its \textbf{fiducial vector}.
If $\Phi$ happens to be an ETF then it is known as a \textbf{Gabor ETF}.

\begin{remark}
Since every NTF is a spanning set, Proposition~\ref{prop: Gabor tight frame} implies in particular that the Heisenberg group $\langle T_x, M_y : x,y \in G \rangle$ acts irreducibly in its natural representation on $\mathbb{F}_{q^2}^G$.
Indeed, if $W \leq \mathbb{F}_{q^2}^G$ is a nonzero invariant subspace, and if $\varphi \in W$ is nonzero, then $W \geq \operatorname{span}\{ T_x M_y \varphi : x,y \in G \} = \mathbb{F}_{q^2}^G$, and $W = \mathbb{F}_{q^2}^G$.
\end{remark}

In order to prove Proposition~\ref{prop: Gabor tight frame} we leverage a unitary geometry on the space $\operatorname{HS}(\mathbb{F}_{q^2}^G)$ of all linear operators on $\mathbb{F}_{q^2}^G$.
Given two such operators $A,B$ we define 
\[
\langle A, B \rangle_F 
= \operatorname{tr}(A^\dagger B)
= \sum_{z\in G} \langle A \delta_z, B \delta_z \rangle.
\]
If $A$ and $B$ are represented by their matrices $\begin{bmatrix} A_{xy} \end{bmatrix}$ and $\begin{bmatrix} B_{xy} \end{bmatrix}$ over the standard basis $\{ \delta_x \}_{x\in G}$, then $\langle A, B \rangle_F = \sum_{x,y \in G} A_{xy}^q B_{xy}$.
It follows easily that $\langle \cdot, \cdot \rangle_F$ is a nondegenerate form, and $\operatorname{HS}(\mathbb{F}_{q^2}^G)$ is a unitary space.

\begin{lemma}
The collection $\{ T_x M_y \}_{x,y \in G}$ is an orthogonal basis for $\operatorname{HS}(\mathbb{F}_{q^2}^G)$, and any $A \in \operatorname{HS}(\mathbb{F}_{q^2}^G)$ satisfies
\begin{equation}
\label{eq: HS expansion}
\sum_{x,y\in G} \langle T_x M_y, A \rangle_F T_x M_y = dA.
\end{equation}
\end{lemma}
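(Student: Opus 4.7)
The plan is to reduce the entire lemma to a single inner-product computation $\langle T_x M_y, T_{x'} M_{y'}\rangle_F$ and then invoke dimension counting plus the standard expansion in an orthogonal basis. Two preliminary observations come first. Regarded as $\mathbb{F}_{q^2}^{G\times G}$, the space $\operatorname{HS}(\mathbb{F}_{q^2}^G)$ has $\mathbb{F}_{q^2}$-dimension $d^2$, matching the cardinality of $\{T_xM_y\}_{x,y\in G}$. Moreover, $d$ is a unit in $\mathbb{F}_{q^2}$: since $\operatorname{char}\mathbb{F}_{q^2}=p$ divides $q$ while each $d_k$ divides $q+1$, we have $\gcd(p,d_k)=1$ for every $k$, hence $p\nmid d$.

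Next I would compute the pairing directly from the definition. Using $M_y\delta_z=\hat y(z)\delta_z$ (diagonal action) and $T_x\delta_z=\delta_{z+x}$ (permutation action), I get $T_xM_y\delta_z=\hat y(z)\,\delta_{z+x}$. Feeding this into $\langle A,B\rangle_F=\sum_{z\in G}\langle A\delta_z,B\delta_z\rangle$ and using the $\sigma$-linearity of $\langle\cdot,\cdot\rangle$ in the first argument yields
\[
\langle T_xM_y,T_{x'}M_{y'}\rangle_F \;=\; \sum_{z\in G}\hat y(z)^q\,\hat{y'}(z)\,\langle \delta_{z+x},\delta_{z+x'}\rangle.
\]
When $x\neq x'$ the orthonormality of $\{\delta_w\}_{w\in G}$ kills every term. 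When $x=x'$, noting that $\hat y(z)\in\mathbb{T}_q$ so $\hat y(z)^q=\hat y(z)^{-1}$, the relations in \eqref{eq: char rels} collapse the sum to $\sum_{z\in G}(y'-y)\hat{\,}(z)$, which by the character sum \eqref{eq: character sum} equals $d$ if $y=y'$ and $0$ otherwise. Consequently $\langle T_xM_y,T_{x'}M_{y'}\rangle_F = d$ when $(x,y)=(x',y')$ and vanishes otherwise.

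Because the $d^2$ operators $T_xM_y$ are pairwise orthogonal with common nonzero self-pairing $d$, they are linearly independent (in any space with a nondegenerate form, an orthogonal set of vectors of unit-like norm is independent), and matching the dimension count forces them to be a basis of $\operatorname{HS}(\mathbb{F}_{q^2}^G)$. For the reproducing formula, I would expand an arbitrary $A\in\operatorname{HS}(\mathbb{F}_{q^2}^G)$ in this orthogonal basis as $A=\sum_{x,y}c_{x,y}T_xM_y$; pairing both sides with $T_{x'}M_{y'}$ and applying the preceding computation gives $c_{x',y'}=d^{-1}\langle T_{x'}M_{y'},A\rangle_F$, and substituting back and multiplying by $d$ produces \eqref{eq: HS expansion}.

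No serious obstacle appears. The only bookkeeping points to keep straight are the $\sigma$-linearity in the first slot of $\langle\cdot,\cdot\rangle_F$ and the identity $\hat y(z)^q=\hat y(z)^{-1}$ on $\mathbb{T}_q$; once these are in place, everything reduces to character orthogonality and dimension counting.
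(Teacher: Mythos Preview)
Your proposal is correct and follows essentially the same approach as the paper: compute the pairwise Hilbert--Schmidt inner products via character orthogonality, then invoke dimension count and the standard expansion in an orthogonal basis. The only cosmetic difference is that the paper first computes $\operatorname{tr}(T_xM_y)$ and then applies the commutation relations~\eqref{eq: TM rels} to reduce $\langle T_xM_y,T_sM_t\rangle_F=\operatorname{tr}(M_{-y}T_{s-x}M_t)$ to a single trace, whereas you compute the inner product directly from the action on the standard basis; both routes collapse to the same character sum.
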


\begin{proof}
For any $x,y \in G$ we compute
\[
\operatorname{tr}(T_x M_y)
= \sum_{z\in G} \langle \delta_z, T_x M_y \delta_z \rangle
= \sum_{z\in G} (T_x M_y \delta_z)(z)
= \sum_{z\in G} \hat{y}(z-x) \delta_z(z-x).
\]
This is clearly $0$ if $x \neq 0$, and otherwise it equals $\sum_{z\in G} \hat{y}(z)$.
By~\eqref{eq: character sum}
\[
\operatorname{tr}(T_x M_y)
= \begin{cases}
d, & \text{if }x = y = 0; \\
0, & \text{otherwise}.
\end{cases}
\]
For any $x,y,s,t \in G$ the relations~\eqref{eq: TM rels} and~\eqref{eq: char rels} now imply
\begin{equation}
\label{eq: TM orthog}
\langle T_x M_y, T_s M_t \rangle_F
= \operatorname{tr}( M_{-y} T_{-x} T_s M_t )
= \hat{y}(x-s) \operatorname{tr}(T_{s-x} M_{t - y})
=
d \delta_{x, s} \delta_{y, t}.
\end{equation}
Furthermore, $\langle T_x M_y, T_x M_y \rangle = d \not\equiv 0$ since $d = \prod_{k=1}^m d_k$ and each $d_k$ is coprime with $q$.
It follows easily that $\{ T_x M_y \}_{x,y\in G}$ is an orthogonal basis for its span, which must equal $\operatorname{HS}( \mathbb{F}_{q^2}^G)$ by dimension count.
Finally, for any $A \in \operatorname{HS}( \mathbb{F}_{q^2}^G)$ we can expand $A = \sum_{x,y \in G} c_{xy} T_x M_y$ with $c_{xy} \in \mathbb{F}_{q^2}$, and for any $s,t \in G$ the identity~\eqref{eq: TM orthog} produces
\[
\langle T_s M_t, A \rangle_F = \sum_{x,y \in G} c_{xy} \langle T_s M_t, T_x M_y \rangle_F = d c_{st}.
\]
This implies~\eqref{eq: HS expansion}.
\end{proof}

\begin{proof}[Proof of Proposition~\ref{prop: Gabor tight frame}]
Fix nonzero $\varphi \in \mathbb{F}_{q^2}^G$.
Given $\psi \in \mathbb{F}_{q^2}^G$ we denote $\psi \varphi^\dagger \in \operatorname{HS}(\mathbb{F}_{q^2}^G)$ for the operator $(\psi \varphi^\dagger)(\vartheta) = \langle \varphi, \vartheta \rangle \psi$, $\vartheta \in \mathbb{F}_{q^2}^G$.
Then for any choice of $A \in \operatorname{HS}( \mathbb{F}_{q^2}^G )$
\[
\langle A, \psi \varphi^\dagger \rangle_F
= \sum_{x\in G} \langle A \delta_x, \psi \varphi^\dagger \delta_x \rangle
= \sum_{x\in G} \Big\langle A \delta_x, \langle \varphi, \delta_x \rangle \psi \Big\rangle
= \Big\langle A \sum_{x\in G} \langle \delta_x, \varphi \rangle \delta_x , \psi \Big\rangle.
\]
That is, $\langle A, \psi \varphi^\dagger \rangle_F = \langle A \varphi, \psi \rangle$.
Applying this identity and~\eqref{eq: HS expansion} we find
\[
\sum_{x,y \in G} \langle T_x M_y \varphi, \psi \rangle T_x M_y \varphi
=
\sum_{x,y \in G} \langle T_x M_y, \psi \varphi^\dagger \rangle_F T_x M_y \varphi
=
d (\psi \varphi^\dagger)(\varphi)
= d a \psi.
\]
Therefore $\Phi \Phi^\dagger = da I$.
Furthermore, if $\psi \neq 0$ then $\psi \varphi^\dagger \neq 0$ since there exists $\vartheta \in \mathbb{F}_{q^2}^G$ with $\langle \varphi, \vartheta \rangle \neq 0$.
Consequently, there exist $x,y \in G$ with $0 \neq \langle T_x M_y, \psi \varphi^\dagger \rangle_F = \langle T_x M_y \varphi, \psi \rangle$.
Since this holds for every $\psi \neq 0$, we conclude that ${\operatorname{span} \{ T_x M_y \varphi \}_{x,y \in G}} = \mathbb{F}_{q^2}^G$.
Finally, we have $\langle T_x M_y \varphi, T_x M_y \varphi \rangle = \langle \varphi, \varphi \rangle = a$ for every $x,y \in G$, since $T_x$ and $M_y$ are unitary.
Therefore $\Phi$ is an $(a,da)$-NTF.
\end{proof}

\subsection{Gerzon equality in finite unitary geometries}

As in the complex setting, translations and modulations can be used to create ETFs that achieve equality in Gerzon's bound.
The difficulty (as ever) lies in identifying a suitable fiducial vector~$\varphi$.
The next example shows how finite fields can simplify this problem by presenting a finite search space that retains many salient features of the complex setting.
(See Theorem~\ref{thm: Zauner} and Example~\ref{ex: SIC projections} further below for more examples of ETFs achieving Gerzon's bound.)

\begin{example}
We produce a $4\times 16$ Gabor ETF over a finite field.
Take $m=1$, $d_1 = d = 4$, and $q=31$.
Let $\zeta \in \mathbb{F}_{31^2}^\times$ be a primitive element, and define $\mu = \zeta^{120}$ and $\omega = \mu^2$.
The latter generate the unique subgroups of $\mathbb{T}_{31}$ with orders $8$ and $4$, respectively.
Define $R \in \mathbb{F}_{31^2}^{4\times 4}$ to be the diagonal matrix with entries $R_{jj} = \mu^{j(j+4)}$ for $j \in \mathbb{Z}/4\mathbb{Z}$, and let $\mathcal{F} = \begin{bmatrix} \omega^{ij} \end{bmatrix} \in \mathbb{F}_{31^2}^{4\times 4}$ be the DFT matrix of Definition~\ref{def: harmframe}.
(Throughout this example we use the ordering $\mathbb{Z}/4\mathbb{Z} = \{ 0,1,2,3\}$.)
Then $Z := R \mathcal{F} \in \mathbb{F}_{31^2}^{4 \times 4}$ is akin to (a scalar multiple of) Zauner's $4\times 4$ complex matrix, an eigenvector of which is known to generate a complex $4 \times 16$ ETF~\cite{Z99}.
A similar phenomenon occurs over $\mathbb{F}_{31^2}$, where the finite search space makes it easier to identify an appropriate fiducial vector.
Specifically, $Z$ has exactly three eigenvalues: $\lambda_1 = \zeta^{648}$, $\lambda_2 = \zeta^8$, and $\lambda_3 = \zeta^{328}$ with respective geometric multiplicities $2,1,1$.
The two-dimensional eigenspace for $\lambda_1$ contains exactly 924 one-dimensional subspaces.
Checking one representative from each line, we find (up to rescaling) exactly four fiducial vectors in the $\lambda_1$-eigenspace that generate Gabor ETFs, namely
\[
\varphi_1 = \left[ \begin{array}{l} 1 \\ \zeta \\ \zeta^{784} \\ \zeta^{70} \end{array} \right],
\quad
\varphi_2 = \left[ \begin{array}{l} 1 \\ \zeta^{70} \\ \zeta^{784} \\ \zeta \end{array} \right],
\quad
\varphi_3 = \left[ \begin{array}{l} 1 \\ \zeta^{391} \\ \zeta^{784} \\ \zeta^{610} \end{array} \right],
\quad
\varphi_4 = \left[ \begin{array}{l} 1 \\ \zeta^{610} \\ \zeta^{784} \\ \zeta^{391} \end{array} \right].
\]
~
\end{example}

\smallskip

We now identify fiducial vectors for infinitely many Gabor ETFs.
Theorem~\ref{thm: Zauner} proves that Gerzon's bound is attained in unitary geometries of infinitely many dimensions over the field $\mathbb{F} = \mathbb{F}_{3^2}$.
This is the first field for which this phenomenon is known to occur.
(In a companion paper we perform a similar feat for Gerzon's bound in orthogonal geometries~\cite{FFF2}.)

\begin{theorem}
\label{thm: Zauner}
Take $q = 3$, $m$ to be odd, and $d_k = 2$ for $1 \leq k \leq m$, so that $G = (\mathbb{Z}/2\mathbb{Z})^{m}$ and $d = 2^m$.
Let $\zeta \in \mathbb{F}_{3^2}$ be a primitive element, and define $\varphi \in \mathbb{F}_{3^2}^d$ by
\[
\varphi(x) =
\begin{cases}
-1 - \zeta^2, & \text{if } x = 0; \\
1, & \text{otherwise}.
\end{cases}
\]
Then $\Phi = \{ T_x M_y \varphi \}_{x,y \in G}$ is a $(0,1,0)$-ETF.
In particular, Gerzon's bound is attained in a unitary space over $\mathbb{F}_{3^2}$ whenever its dimension is twice a power of $4$.
\end{theorem}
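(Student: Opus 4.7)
The plan is to verify, in order, three conditions defining a $(0,1,0)$-ETF: the norm $\langle \varphi, \varphi\rangle = 0$, the tight-frame relation $\Phi\Phi^\dagger = 0$, and the equiangularity condition $\langle T_x M_y \varphi, T_s M_t \varphi\rangle \langle T_s M_t \varphi, T_x M_y \varphi\rangle = 1$ for $(x,y) \neq (s,t)$. The second will be immediate from the first via Proposition \ref{prop: Gabor tight frame}, so only two real computations are required.

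For the norm, set $\alpha = -1 - \zeta^2$. Since $\zeta \in \mathbb{F}_{3^2}$ is primitive, $\zeta^4 = -1$, and since $\mathrm{char}\,\mathbb{F}_{3^2} = 3$ we have $\alpha^q = \alpha^3 = -1 - \zeta^6 = -1 + \zeta^2$. Therefore $\alpha^{q+1} = (-1-\zeta^2)(-1+\zeta^2) = 1 - \zeta^4 = 2$, and
\[
\langle \varphi, \varphi\rangle = \alpha^{q+1} + (d - 1) \cdot 1 = 2 + (d-1).
\]
The hypothesis that $m$ is odd gives $d = 2^m \equiv 2 \pmod{3}$, so $\langle \varphi, \varphi\rangle = 0$ and Proposition \ref{prop: Gabor tight frame} produces a $(0,0)$-NTF, i.e., a tight frame with $\Phi\Phi^\dagger = 0$ whose vectors still span $V$.

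For equiangularity, I will use the commutation relations \eqref{eq: TM rels} together with the characteristic-2 identities $T_x^{-1} = T_x$ and $M_y^{-1} = M_y$ to rewrite
\[
\langle T_x M_y \varphi, T_s M_t \varphi\rangle = \hat{y}(x+s)\, A(x+s, y+t), \qquad A(u,v) := \langle \varphi, T_u M_v \varphi\rangle.
\]
Because $\hat{y}(x+s) \in \mathbb{T}_q$ satisfies $|\hat{y}(x+s)|^{q+1} = 1$, the required equiangularity reduces to $A(u,v)^{q+1} = 1$ for every $(u,v) \neq (0,0)$. Expanding $A(u,v) = \hat{v}(u) \sum_x \varphi(x)^q \hat{v}(x)\, \varphi(x+u)$ and isolating the terms at $x = 0$ and (when $u \neq 0$) at $x = u$, the character-sum identity \eqref{eq: character sum} together with the algebraic relations $\alpha + \alpha^q = 1$ and $\alpha^{q+1} = 2$ yields $A(0, v) = 1$ for $v \neq 0$, $A(u, 0) = 1$ for $u \neq 0$, and
\[
A(u,v) = -\alpha \hat{v}(u) - \alpha^q \qquad \text{for } u,v \neq 0.
\]
Since $\hat{v}(u) \in \{\pm 1\}$, this last quantity equals either $-\alpha - \alpha^q = -1$ or $\alpha - \alpha^q = -2\zeta^2 = \zeta^2$; both satisfy $X^{q+1} = 1$, the latter because $\zeta^{2(q+1)} = \zeta^8 = 1$.

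The delicate step is the last case $u, v \neq 0$: the identity $\alpha + \alpha^q = 1$ is what forces the off-diagonal sum to collapse to a unimodular element of $\mathbb{F}_{3^2}$, and the specific choice $\varphi(0) = -1 - \zeta^2$ is engineered so that $\alpha^{q+1} = 2$, $\alpha + \alpha^q = 1$, and $d - 1 \equiv 1 \pmod{3}$ all hold simultaneously. Once these three cases are verified, all three defining properties of a $(0,1,0)$-ETF are established; the final sentence about ``twice a power of $4$'' is just the reparametrization $m = 2l + 1$ giving $d = 2 \cdot 4^l$, and the fact that Gerzon's bound for a unitary space of dimension $d$ is $d^2$ completes the statement.
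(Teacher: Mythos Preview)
Your proof is correct and follows essentially the same approach as the paper: both compute $\langle \varphi, T_u M_v \varphi\rangle$ directly by case analysis on whether $u,v$ vanish, using the character sum~\eqref{eq: character sum} and the identities $\alpha+\alpha^q=1$, $\alpha^{q+1}=2$, $d\equiv -1$. The paper organizes the computation via the decomposition $\varphi=\mathbf{1}+(1-\zeta^2)\delta_0$ and expands bilinearly into four inner products, while you expand the sum over $x$ and isolate the special terms at $x=0$ and $x=u$; these are the same calculation packaged differently, and your final case values $-1$ and $\zeta^2$ match the paper's exactly.
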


\begin{proof}
Observe that $x = -x$ in $G$ and $2 \equiv -1$ in $\mathbb{F}_{3^2}$.
In particular, $d = 2^m \equiv -1$.
Furthermore $(\zeta^2)^2 = -1$ and $(\zeta^2)^3 = - \zeta^2$.

Let $\mathbf{1} \in \mathbb{F}_{3^2}^G$ be the function that is constantly~1, so  $\varphi = \mathbf{1} + {(1 - \zeta^2)} \delta_0$.
Here the coefficient on $\delta_0$ satisfies $(1 - \zeta^2)^3 = 1 + \zeta^2$ and $(1 - \zeta^2)^{3+1} = 1 - (\zeta^2)^2 = -1$.
With this in mind, for any $x,y \in G$ we expand to find
\[
\langle \varphi, T_x M_y \varphi \rangle
=
\langle \mathbf{1}, T_x M_y \mathbf{1} \rangle
+ (1 - \zeta^2) \langle \mathbf{1}, T_x M_y \delta_0 \rangle
+ (1 + \zeta^2) \langle \delta_0, T_x M_y \mathbf{1} \rangle
- \langle \delta_0, T_x M_y \delta_0 \rangle.
\]
Given $z \in G$ we have $(T_x M_y \mathbf{1})(z) = \hat{y}(z-x)$ and $(T_x M_y \delta_0)(z) = \delta_x(z)$.
Thus
\[
\langle \mathbf{1}, T_x M_y \mathbf{1} \rangle
= \sum_{z\in G} (T_x M_y \mathbf{1})(z)
= \sum_{z\in G} \hat{y}(z-x)
= \sum_{z\in G} \hat{y}(z)
= d \delta_{y,0}
= - \delta_{y,0}
\]
and
\[
\langle \mathbf{1}, T_x M_y \delta_0 \rangle
= \sum_{z\in G} (T_x M_y \delta_0)(z) = 1,
\]
while
\[
\langle \delta_0, T_x M_y \mathbf{1} \rangle
= (T_x M_y \mathbf{1})(0) 
= \hat{y}(-x)
= \hat{y}(x)
\]
and
\[
\langle \delta_0, T_x M_y \delta_0 \rangle
= (T_x M_y \delta_0)(0) = \delta_{x,0}.
\]
Therefore
\[
\langle \varphi, T_x M_y \varphi \rangle
=
- \delta_{y,0} 
+ 1 - \zeta^2 
+ (1 + \zeta^2)\hat{y}(x)
- \delta_{x,0}.
\]
Since $d_k = 2$ for every $k$ we must have $\hat{y}(x) \in \{ \pm 1 \}$.
Simplifying above, we find
\[
\langle \varphi, T_x M_y \varphi \rangle
=
\begin{cases}
0, & \text{if }x = y = 0; \\
1, & \text{if }\{0\} \subsetneq \{ x, y \}; \\
-1, & \text{if }0\notin \{ x, y\} \text{ and } \hat{y}(x) = 1; \\
\zeta^2, & \text{otherwise}.
\end{cases}
\]
In particular, $\langle \varphi, T_x M_y \varphi \rangle^{3 + 1} = 1$ whenever $\{x,y\} \neq \{ 0 \} $.
Now for any $s,t \in G$ with $(s,t) \neq (x,y)$ the relations~\eqref{eq: TM rels} produce
\[
\langle T_x M_y \varphi, T_s M_t \varphi \rangle
= \langle \varphi, M_{-y} T_{-x} T_s M_t \varphi \rangle
= \hat{y}(x - s) \langle \varphi, T_{s-x}M_{t-y} \varphi \rangle,
\]
so that $\langle T_x M_y \varphi, T_s M_t \varphi \rangle^{3+1} = 1$.
Hence $\Phi = \{ T_x M_y \varphi \}_{x,y \in G}$ is a $(0,1)$-equiangular system.
By Proposition~\ref{prop: Gabor tight frame} it is a~$(0,1,0)$-ETF.
\end{proof}

\begin{remark}
Over the complex numbers an ETF that attains equality in Gerzon's bound is also known as a \textbf{symmetric informationally complete positive operator valued measure}, or a \textbf{SIC-POVM}.
This terminology comes from quantum information theory, where such ETFs are important partly because their outer products $\varphi \varphi^\dagger$ provide a basis for operator space that consists of rank-one projections summing to a nonzero multiple of the identity~\cite{RBSC04}.

We intentionally avoid this terminology in Theorem~\ref{thm: Zauner} since the ETFs it produces consist of isotropic vectors, and the frame constant is zero.
Consequently the outer products all have trace zero, and no linear combination of them recreates the identity operator.
However, as shown in the proof of Gerzon's bound (Theorem~\ref{thm: Gerzon}) the outer products span the codimension-one space of traceless operators, and they form the finite field equivalent of a simplex in that space.
In doing so they produce the largest possible collection of traceless rank-one self-adjoint operators with constant pairwise value of $\langle \cdot, \cdot \rangle_F$.
\end{remark}

\begin{remark}
\label{rem: Hoggar}
The ETFs created in Theorem~\ref{thm: Zauner} may be seen as an infinite family that generalizes Hoggar's lines~\cite{H81,H98}.
The latter refers to an $8\times 64$ complex ETF with entries in the ring $\mathbb{Z}[i]$ of Gaussian integers.
(Here we have in mind the version given by Jedwab and Wiebe~\cite{JW15}.)
Specifically, let $G = (\mathbb{Z}/2\mathbb{Z})^3$, and consider the translation and modulation operations on $\mathbb{C}^G$ given by 
\[ (\tilde{T}_y \psi)(x) = \psi(x - y) \quad \text{and} \quad (\tilde{M}_y \psi)(x) = (-1)^{x \cdot y}  \qquad \text{for $\psi \in \mathbb{C}^G$, $x,y \in G$}, \]
where $x \cdot y$ denotes the dot product.
Define $\psi \in \mathbb{C}^G$ by 
\[ \psi(x) = \begin{cases} -1 + 2i, & \text{if }x = 0; \\ 1, & \text{otherwise.} \end{cases} \]
Then the system $\Psi = \{\tilde{T}_x \tilde{M}_y \psi \}_{x,y \in G}$ is an $8 \times 64$ complex ETF~\cite{JW15,SS16}.
To relate this with an ETF given by Theorem~\ref{thm: Zauner}, let $\zeta \in \mathbb{F}_{3^2}^\times$ be a primitive element, and let $f \colon \mathbb{Z}[i] \to \mathbb{F}_{3^2}$ be the unique ring homomorphism given by $f(a+bi) = a + b\zeta^2$.
(This is well defined since $\mathbb{Z}[i] \cong \mathbb{Z}[x] / ( x^2 + 1 )$ and $\zeta^4 = -1$.)
Observe that $f(\overline{z}) = f(z)^3$ for every $z \in \mathbb{Z}[i]$.
It follows that $f$ maps the ETF $\Psi \in \mathbb{Z}[i]^{8 \times 64}$ to an equiangular system $f(\Psi) \in \mathbb{F}_{3^2}^{8\times 64}$.
(Here we choose orderings on $G$ and $G \times G$ to identify $\mathbb{C}^G$ with $\mathbb{C}^8$, and so on.
We also extend $f$ to a mapping on matrices by entrywise application.)
In fact, $f(\Psi)= \Phi$ is exactly the ETF of Theorem~\ref{thm: Zauner} when $m=3$, where $f(\psi) = \varphi$ is the given fiducial vector in that case.

Furthermore, it is possible to recover Hoggar's lines from the finite field ETF $\Phi$ when $m=3$.
Explicitly, let $H \in \mathbb{F}_{q^2}^{64 \times 64}$ be the Gram matrix of $\Phi$.
As shown in the proof of Theorem~\ref{thm: Zauner}, $H$ has zeros on the diagonal, with off-diagonal entries in $\mathbb{T}_3 = \{ \pm 1, \pm \zeta^2 \}$.
Let $g \colon \{ 0 \} \cup \mathbb{T}_3 \to \mathbb{C}$ be the multiplicative character given by $g(0) = 0$ and $g(\zeta^{2l}) = i^l$.
Extending $g$ to a mapping on matrices, it turns out that $S := g(H) \in \mathbb{C}^{64 \times 64}$ has exactly two eigenvalues.
Adding an appropriate amount of identity, we find that $S + 3I$ is the Gram matrix of a complex $8\times 64$ ETF.
In fact $S+3I = \Psi^* \Psi$ is the Gram matrix of Hoggar's lines.

Sadly, this procedure does not produce a complex ETF of size $32 \times 1024$ when we take $m=5$ in Theorem~\ref{thm: Zauner}, where we found that the corresponding matrix $S \in \mathbb{C}^{1024 \times 1024}$ has three eigenvalues.
This is not surprising, since Godsil and Roy have shown that the group $G = (\mathbb{Z}/2\mathbb{Z})^m$ generates a complex $2^m \times 2^{2m}$ Gabor ETF only if $m \in \{1,3\}$~\cite{GR09}.

Finally, we remark that Hoggar's lines are highly symmetric and have a doubly transitive automorphism group~\cite{Zhu15,IM:DTI}.
We have not investigated the symmetries of the ETFs given by Theorem~\ref{thm: Zauner}, and we leave this problem for future study.
\end{remark}

\begin{problem}
For $\Phi$ as in Theorem~\ref{thm: Zauner}, determine the group $\operatorname{Aut} \Phi$ of all permutations $\mu \in S(G\times G)$ having components $\mu(x,y) =: \bigl(\mu_1 (x,y), \mu_2(x,y) \bigr)$ for which there exist scalars $c_\mu(x,y) \in \mathbb{F}_{q^2}^\times$ such that $T_{\mu_1(x,y)} M_{\mu_2(x,y)} \varphi = c_\mu(x,y) T_x M_y \varphi$ for every $x,y \in G$.
\end{problem}

\section{Finite field ETFs from complex ETFs}
\label{sec: C to F}
In this section we prove that every complex ETF produces ETFs in infinitely many finite fields.
In the process we show that the existence of a $d \times n$ complex ETF implies that of a $d \times n$ ETF with algebraic entries.
Our main result is the following.

\begin{theorem}
\label{thm: C to F}
Suppose there is a $d\times n$ complex ETF.
Then, for infinitely many pairwise coprime $q$, there is an ETF of $n$ vectors in a unitary geometry on $\mathbb{F}_{q^2}^d$.
\end{theorem}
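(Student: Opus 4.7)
The proof should be a short synthesis of the two theorems that Section~\ref{sec: C to F} builds toward. First, by Theorem~\ref{thm:alg}, I may replace the given complex $d\times n$ ETF by one, call it $\Phi$, whose entries all lie in $\overline{\mathbb{Q}}$. Then Theorem~\ref{thm:project} projects $\Phi$ into infinitely many finite fields, each projection yielding an ETF of $n$ vectors in a unitary geometry on $\mathbb{F}_{q^2}^d$ for a corresponding $q$.

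It remains only to pass to an infinite \emph{pairwise coprime} subfamily of such $q$. The projections produced by Theorem~\ref{thm:project} arise by reducing the entries of $\Phi$ modulo a prime ideal $\mathfrak{p}$ of a common number ring $\mathcal{O}_K$, so each resulting $q$ is a power of the residue characteristic $p$ of $\mathfrak{p}$. Since at most finitely many prime ideals of $\mathcal{O}_K$ lie above any single rational prime $p$, the infinite family of admissible $\mathfrak{p}$ must involve infinitely many distinct residue characteristics. Selecting one projection for each residue characteristic used produces a sequence of $q$-values whose prime divisors are pairwise distinct, hence pairwise coprime, as required.

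The main obstacle is really addressed inside Theorem~\ref{thm:project}, not in the combination above. I expect that theorem to handle (i) avoidance of primes of bad reduction, namely those dividing denominators of the entries of $\Phi$, the frame constant $c$, or the discriminant of the Gram matrix, for otherwise the tight-frame and equiangular identities fail to descend to $\mathcal{O}_K/\mathfrak{p}$; and (ii) selection of $\mathfrak{p}$ so that complex conjugation on $K$ restricts to the nontrivial Galois automorphism of $\mathbb{F}_{q^2}/\mathbb{F}_q$, putting the reduction into Case~U rather than Case~O. Both constraints exclude only a density-zero set of primes (the first is a finite exclusion; the second amounts to choosing $\mathfrak{p}$ inert in the extension $K/K_0$ over the fixed field of conjugation, which is controlled by Chebotarev density), so the infinite family they leave intact is certainly large enough to extract the pairwise coprime subsequence in the previous paragraph.
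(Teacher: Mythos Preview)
Your overall strategy matches the paper's: invoke Theorem~\ref{thm:alg} to obtain algebraic entries, then apply Theorem~\ref{thm:project}. Two corrections are in order.

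First, you omit a necessary intermediate step. Theorem~\ref{thm:project} requires the entries of $\Phi$ to lie in $\mathcal{O}_E$ for some Galois number field $E$, i.e., to be algebraic \emph{integers}, not merely algebraic numbers. The paper handles this by rescaling $\Phi$ to clear denominators before invoking Theorem~\ref{thm:project}; this is legitimate because a nonzero scalar multiple of an ETF is again an ETF. You instead propose to absorb denominators into the ``bad reduction'' set inside Theorem~\ref{thm:project}, but that theorem as stated does not accommodate this, so either rescale or explicitly extend the argument.

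Second, your paragraph extracting a pairwise coprime subfamily is unnecessary: the conclusion of Theorem~\ref{thm:project} already delivers infinitely many pairwise coprime $q$. (The argument you sketch, that only finitely many primes of $\mathcal{O}_E$ lie over each rational prime, is exactly what the ``additionally'' clause of Lemma~\ref{lem: nice ideals} establishes, and it is folded into Theorem~\ref{thm:project}.) Your final paragraph is likewise commentary on the proof of Theorem~\ref{thm:project} rather than on Theorem~\ref{thm: C to F}; it is broadly accurate in spirit, though the paper avoids the discriminant of the Gram matrix entirely and uses Frobenius density in place of Chebotarev.
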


We proceed in two steps.
First we nudge the complex ETF to have algebraic entries, then we map the algebraic ETF into infinitely many finite fields.

\subsection{Preliminaries}

First we recall some basic number theory, where standard references include~\cite{DF04,Ja73,Ne99}.
An \textbf{algebraic number} is a zero of a polynomial with rational coefficients; an \textbf{algebraic integer} is a zero of a monic polynomial with integer coefficients. 
The algebraic integers form a ring, and every algebraic number is a ratio of algebraic integers. 
A \textbf{number field} $E$ is a field with $\mathbb{Q} \leq E \leq \mathbb{C}$ and $\dim_{\mathbb{Q}} E < \infty$.
We write $\mathcal{O}_E$ for the ring of algebraic integers contained in $E$.
Its ideals have the following properties.
\begin{proposition}
\label{prop:idealbasics}
If $E$ is a number field, then the following hold for any proper nonzero ideal $\mathfrak{a} \subset \mathcal{O}_E$:
\begin{itemize}
\item[(a)]
$\mathcal{O}_E / \mathfrak{a}$ is finite,
\item[(b)]
if $\mathfrak{a}$ is prime then it is maximal,
\item[(c)]
only finitely many prime ideals of $\mathcal{O}_E$ contain $\mathfrak{a}$.
\end{itemize}
\end{proposition}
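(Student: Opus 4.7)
The plan is to derive all three statements from a single core fact, namely that any proper nonzero ideal $\mathfrak{a} \subset \mathcal{O}_E$ meets $\mathbb{Z}$ in a nonzero ideal. Once we have this, (a) reduces to understanding the quotient by a nonzero rational integer, and then (b) and (c) follow from standard ring-theoretic generalities about finite commutative rings. The only nontrivial input we will invoke without proof is that $\mathcal{O}_E$ is a free $\mathbb{Z}$-module of rank $n := [E:\mathbb{Q}]$, which is one of the foundational results of algebraic number theory.

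For (a), I will pick any nonzero $\alpha \in \mathfrak{a}$ and consider its minimal polynomial over $\mathbb{Q}$, which takes the form $f(x) = x^k + c_{k-1} x^{k-1} + \cdots + c_1 x + c_0 \in \mathbb{Z}[x]$ since $\alpha$ is an algebraic integer. Because $f$ is irreducible over $\mathbb{Q}$ and $\alpha \neq 0$, we have $c_0 \neq 0$. Rewriting $f(\alpha) = 0$ as
\[
c_0 = -\alpha\bigl( \alpha^{k-1} + c_{k-1} \alpha^{k-2} + \cdots + c_1 \bigr)
\]
shows $c_0 \in \mathfrak{a}$, so $\mathfrak{a}$ contains the nonzero integer $m := c_0$. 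Hence $\mathcal{O}_E / \mathfrak{a}$ is a quotient of $\mathcal{O}_E / (m)$, and the latter is isomorphic as an abelian group to $(\mathbb{Z}/m\mathbb{Z})^n$ (using the free $\mathbb{Z}$-module structure), which is finite.

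For (b), if $\mathfrak{a}$ is prime then $\mathcal{O}_E / \mathfrak{a}$ is a finite integral domain; any finite integral domain is a field (multiplication by a nonzero element is an injective, hence bijective, self-map), so $\mathfrak{a}$ is maximal. For (c), prime ideals of $\mathcal{O}_E$ containing $\mathfrak{a}$ are in bijection with prime ideals of the finite ring $R := \mathcal{O}_E / \mathfrak{a}$, so it suffices to observe that a finite commutative ring has only finitely many prime ideals—indeed, it has only finitely many ideals in total. The main obstacle in this proof is really the citation in (a) that $\mathcal{O}_E$ is finitely generated (in fact free) over $\mathbb{Z}$; everything else is a short ring-theoretic consequence, so I would simply reference a standard text such as \cite{Ne99} for this structural fact and keep the written proof brief.
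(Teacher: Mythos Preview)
Your argument is correct and is the standard one. Note, however, that the paper does not actually prove this proposition: it appears in a preliminaries subsection prefaced by ``we recall some basic number theory, where standard references include~\cite{DF04,Ja73,Ne99},'' and no proof is given. So there is nothing to compare against beyond the fact that the authors treat (a)--(c) as well-known facts to be cited rather than proved. Your write-up supplies exactly the textbook argument those references would contain, and your instinct to cite a standard source (such as \cite{Ne99}) for the fact that $\mathcal{O}_E$ is a free $\mathbb{Z}$-module of rank $[E:\mathbb{Q}]$ is appropriate and matches the paper's spirit.
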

If $E$ is a number field, then there exists $\alpha \in E$ such that $E = \mathbb{Q}(\alpha)$, and $E = \{ f(\alpha) : f \in \mathbb{Q}[x] \}$.
The \textbf{minimal polynomial} of $\alpha$ is the monic polynomial $m_\alpha \in \mathbb{Q}[x]$ of lowest degree such that $m_\alpha(\alpha) = 0$.
We say $E$ is \textbf{Galois} if it contains every root of $m_\alpha$.
In any case, there exists a Galois number field containing $E$.

If $E$ is Galois then both $E$ and $\mathcal{O}_E$ are closed under complex conjugation.
This gives each the structure of a \textbf{$\ast$-ring}, that is, a ring equipped with an involutory ring automorphism.
A \textbf{$\ast$-ring homomorphism} is a ring homomorphism between $\ast$-rings that preserves the involution.
This completes our brief review.

\subsection{Complex ETFs with algebraic entries}

Now we show that the existence of a $d \times n$ complex ETF implies that of a $d \times n$ ETF with algebraic entries.

\begin{theorem}
\label{thm:alg}
If there is a $d\times n$ complex ETF, then there is a $d\times n$ complex ETF with algebraic entries.
\end{theorem}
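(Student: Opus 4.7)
The plan is to recast the existence of a $d \times n$ complex ETF as the nonemptiness of a real algebraic variety defined over $\mathbb{Q}$, and then apply the transfer principle for real closed fields to produce a point with algebraic coordinates. The main technical step is to translate the ETF property (which a priori involves complex conjugation) into polynomial equations over $\mathbb{Q}$.

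First I would normalize. Given a $d \times n$ complex ETF $\Phi_0$, rescale so that $\langle \varphi_k, \varphi_k \rangle = 1$ for every column. Since $\operatorname{char}\mathbb{C} = 0$, Proposition~\ref{prop: Welch} then forces the remaining parameters to the rational values $c = n/d$ and $b = (n-d)/(d(n-1))$. Next I would write each candidate $\Phi = A + iB$ with $A, B \in \mathbb{R}^{d \times n}$, treating the $2dn$ real entries of $A, B$ as indeterminates $x$. Expanding $\Phi^{\ast} = A^{\top} - iB^{\top}$ and separating real and imaginary parts converts the three ETF conditions---namely $\Phi \Phi^{\ast} = cI$, $\langle \varphi_k, \varphi_k \rangle = 1$, and $|\langle \varphi_i, \varphi_j \rangle|^2 = b$ for $i \neq j$---into a finite list of polynomial equations $f_\alpha(x) = 0$ with coefficients in $\mathbb{Q}$. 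Let $V \subset \mathbb{R}^{2dn}$ be the real algebraic variety they define; by hypothesis $V(\mathbb{R}) \neq \emptyset$.

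Finally I would invoke Tarski's theorem on the completeness of the theory of real closed fields. The field $\mathbb{R}_{\mathrm{alg}} := \mathbb{R} \cap \overline{\mathbb{Q}}$ of real algebraic numbers is the real closure of $\mathbb{Q}$ and hence is itself a real closed field. The existential sentence $\exists x \colon \bigwedge_\alpha f_\alpha(x) = 0$ encoding $V \neq \emptyset$ has parameters in $\mathbb{Q} \subseteq \mathbb{R}_{\mathrm{alg}}$, so by the Tarski--Seidenberg transfer principle it holds in $\mathbb{R}_{\mathrm{alg}}$ if and only if it holds in $\mathbb{R}$. Since the latter holds by assumption, we obtain $(A, B) \in V(\mathbb{R}_{\mathrm{alg}})$, and $\Phi := A + iB$ is a $d \times n$ complex ETF whose entries lie in $\mathbb{R}_{\mathrm{alg}}(i) \subseteq \overline{\mathbb{Q}}$. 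There is no deep obstacle in this plan: essentially the entire argument reduces to the transfer principle, once the routine bookkeeping of the polynomial encoding is completed.
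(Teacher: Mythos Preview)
Your argument is correct. Both your proof and the paper's rest on Tarski--Seidenberg, but you deploy it differently. The paper first proves a general lemma: every nonempty closed integral semialgebraic set contains a point with real algebraic coordinates. Its proof is constructive in flavor---intersect with a large ball, then iteratively maximize one coordinate at a time, using the Tarski--Seidenberg projection theorem at each step to see that the supremum is algebraic and is attained. You instead normalize so that the parameters $c=n/d$ and $b=(n-d)/(d(n-1))$ are rational, encode the ETF conditions as a system of polynomial equations over $\mathbb{Q}$, and then invoke the transfer principle between the real closed fields $\mathbb{R}$ and $\mathbb{R}_{\mathrm{alg}}$ directly as a black box. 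Your route is shorter and avoids the iterative bookkeeping; the paper's route yields a reusable lemma (applying to arbitrary closed integral semialgebraic sets, inequalities included) and produces an explicit algebraic point rather than merely asserting existence. Either way, the substance is the same model-theoretic fact about real closed fields.
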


To prove Theorem~\ref{thm:alg}, consider the algebra of sets generated by sets of the form
\[
\{x\in\mathbb{R}^n:f(x)\geq0\},
\qquad
f\in\mathbb{Z}[x_1,\ldots,x_n].
\]
We refer to members of this algebra as \textbf{integral semialgebraic sets}.
For example, by taking real and imaginary parts of matrix entries, the set of $d\times n$ complex ETFs may be viewed as an integral semialgebraic subset of $\mathbb{R}^{2dn}$.
As such, Theorem~\ref{thm:alg} is a special case of the following.

\begin{lemma}
Every nonempty closed integral semialgebraic set contains a point whose coordinates are all real algebraic numbers.
\end{lemma}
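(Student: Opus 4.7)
The plan is to derive the lemma from the Tarski--Seidenberg transfer principle, i.e., the completeness of the theory of real closed fields. An integral semialgebraic set $S \subseteq \mathbb{R}^n$ is by construction the set of real solutions of a quantifier-free first-order formula $\phi(x_1, \ldots, x_n)$ in the language of ordered rings whose parameters may be taken in $\mathbb{Z}$. The hypothesis that $S$ is nonempty is then captured by the existential sentence $\exists x_1 \cdots \exists x_n \, \phi(x_1, \ldots, x_n)$ holding in $\mathbb{R}$, and what we want is that the same sentence holds in the field $\mathbb{R}_{\mathrm{alg}} := \overline{\mathbb{Q}} \cap \mathbb{R}$ of real algebraic numbers.

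First I would verify that $\mathbb{R}_{\mathrm{alg}}$ is a real closed field: it is closed under the field operations, every positive element has a real square root that is again algebraic, and every odd-degree polynomial with algebraic coefficients has an algebraic real root. Then I would invoke Tarski's theorem that the theory of real closed fields is complete, so any two real closed fields (in particular $\mathbb{R}$ and $\mathbb{R}_{\mathrm{alg}}$) satisfy the same first-order sentences with parameters in $\mathbb{Q}$. Applying this to our existential sentence transfers nonemptiness of $S$ from $\mathbb{R}$ to $\mathbb{R}_{\mathrm{alg}}^n$, producing the required algebraic point.

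I do not anticipate a serious obstacle; the only judgement call is how much of the Tarski--Seidenberg machinery to spell out. The hypothesis that $S$ is closed plays no role in this argument, which may be why the authors state the lemma in this form (perhaps they have a different proof in mind that uses closedness to extract a minimizer). If a more concrete argument is desired, one can alternatively apply cylindrical algebraic decomposition to partition $S$ into finitely many cells, each defined by polynomial equations and inequalities over $\mathbb{Z}$; any cell of minimal dimension is cut out by integer-polynomial equations whose roots are automatically algebraic, and sampling such a cell yields the desired point.
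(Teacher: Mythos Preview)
Your argument is correct and in fact proves a slightly stronger statement, since (as you note) closedness is not used. The paper takes a different, more hands-on route: it first intersects $S$ with a large integer-radius ball to obtain a compact integral semialgebraic set $S_0$, and then iteratively sets $z_i=\sup\pi_i(S_{i-1})$ and $S_i=S_{i-1}\cap\{x:\pi_i(x)=z_i\}$. At each step the projection form of Tarski--Seidenberg (as in \cite[Theorem~1.4.2]{BCR98}) shows $\pi_i(S_{i-1})$ is a finite union of closed intervals with algebraic endpoints, so $z_i$ is algebraic and attained; after $n$ steps one is left with the single point $(z_1,\ldots,z_n)\in S$. Your transfer-principle argument is shorter and avoids the closedness hypothesis entirely, at the cost of invoking the full completeness of the theory of real closed fields rather than just the projection theorem. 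The paper's approach, by contrast, is essentially constructive: it singles out a specific algebraic point (the lexicographic maximum of $S_0$), which could be of independent interest if one ever needed to compute it.
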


\begin{proof}
Let $S\subseteq\mathbb{R}^n$ be nonempty, closed, and integral semialgebraic.
By nonemptyness, there exists an integer $k$ such that $S_0:=S\cap\{x:\|x\|^2\leq k\}$ is nonempty.
For each $i\in[n]$, let $\pi_i\colon\mathbb{R}^n\to\mathbb{R}$ denote projection onto the $i$th coordinate.
We will iteratively take 
\[
z_i:=\sup\pi_i(S_{i-1}),
\qquad
S_i:=S_{i-1}\cap\{x \in \mathbb{R}^n : \pi_i(x)=z_i\}.
\]
We claim that the following hold for each $i\in[n]$:
\begin{itemize}
\item[(i)]
$z_i$ is a real algebraic number,
\item[(ii)]
$S_i$ is a nonempty compact integral semialgebraic subset of $\mathbb{R}^n$.
\end{itemize}
Considering $S\supseteq S_n=\{(z_1,\ldots,z_n)\}$, the result follows from this claim.

Observe that $S_0$ is a nonempty compact integral semialgebraic subset of $\mathbb{R}^n$.
We will show that (i) and (ii) together follow from $S_{i-1}$ being a nonempty compact integral semialgebraic subset of $\mathbb{R}^n$, and then the claim follows by induction.
Since $S_{i-1}$ is nonempty, compact, and integral semialgebraic, it follows from Tarski--Seidenberg (Theorem~1.4.2 of~\cite{BCR98}) that $\pi_i(S_{i-1})$ is also nonempty, compact, and integral semialgebraic.
As such, $\pi_i(S_{i-1})$ is the disjoint union of finitely many compact intervals with real algebraic endpoints.
It follows that (i) holds.
Since $z_i\in\pi_i(S_{i-1})$, it follows that $S_i$ is nonempty.
To finish the proof of (ii), considering our hypothesis on $S_{i-1}$, it suffices to demonstrate that $\{x:x_i=z_i\}$ is closed and integral semialgebraic.
It is closed since $\pi_i$ is continuous.
To see it is integral semialgebraic, consider any polynomial $f\in\mathbb{Z}[x]$ for which $f(z_i)=0$, and select $a,b\in\mathbb{Q}$ such that $z_i$ is the only root of $f$ in $[a,b]$.
Then 
\[
\{x:x_i=z_i\}
=\{x:f(x_i)=0\}\cap\{x:x_i\geq a\}\cap\{x:x_i\leq b\}
\]
is integral semialgebraic, as desired.
\end{proof}

\subsection{Projecting complex ETFs with algebraic entries}

Next we show that the existence of a complex ETF implies that of corresponding ETFs over infinitely many finite fields.

\begin{theorem}
\label{thm:project}
Suppose $\Phi \in \mathbb{C}^{d\times n}$ is an ETF with entries in $\mathcal{O}_E$, where $E$ is a Galois number field.
Then, for infinitely many pairwise coprime $q$ there exists a $\ast$-ring homomorphism $f \colon \mathcal{O}_E \to \mathbb{F}_{q^2}$ such that $f(\Phi) \in \mathbb{F}_{q^2}^{d\times n}$ is a $d\times n$ ETF in the complex model.
\end{theorem}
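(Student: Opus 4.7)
The plan is to reduce $\Phi$ modulo a suitable family of prime ideals of $\mathcal{O}_E$. Because $E$ is Galois, $\mathcal{O}_E$ is stable under complex conjugation, and the ETF hypotheses $\Phi\Phi^\ast = cI$ and $(\Phi^\ast\Phi)_{ij}(\Phi^\ast\Phi)_{ji} = b$ for $i \neq j$ become $\mathbb{Z}$-polynomial identities in the entries of $\Phi$ and $\overline{\Phi}$ with constants $a,b,c \in \mathcal{O}_E$; in particular $c \neq 0$ since $\Phi\Phi^\ast$ is invertible. Hence any $\ast$-ring homomorphism $f\colon \mathcal{O}_E \to \mathbb{F}_{q^2}$ with $f(c) \neq 0$ will carry $\Phi$ to a matrix satisfying the analogous relations in $\mathbb{F}_{q^2}$, and the invertibility of $f(c)I$ will force $f(\Phi)$ to have full rank $d$. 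Thus $f(\Phi)$ will be an $(f(a),f(b),f(c))$-ETF in the complex model on $\mathbb{F}_{q^2}^d$.

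Next, let $\kappa \in \operatorname{Gal}(E/\mathbb{Q})$ denote the restriction of complex conjugation (possibly trivial). The key step is to apply the Chebotarev density theorem to $E/\mathbb{Q}$: a positive density of rational primes $p$ are unramified in $\mathcal{O}_E$ and admit a prime ideal $\mathfrak{p}\supset (p)$ whose Frobenius element equals $\kappa$. Writing $k = |\langle \kappa \rangle| \in \{1,2\}$, for any such $\mathfrak{p}$ the decomposition group is $\langle \kappa \rangle$, so $\mathcal{O}_E/\mathfrak{p} \cong \mathbb{F}_{p^k}$, and the involution on $\mathcal{O}_E/\mathfrak{p}$ induced by complex conjugation coincides with the Frobenius $x \mapsto x^p$. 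I then take $q = p$ and distinguish two cases. When $k = 2$, the quotient $\mathcal{O}_E \twoheadrightarrow \mathcal{O}_E/\mathfrak{p} = \mathbb{F}_{q^2}$ is already a $\ast$-ring homomorphism, matching the canonical involution $x \mapsto x^q$ on $\mathbb{F}_{q^2}$. When $k = 1$ (so $E \subseteq \mathbb{R}$), complex conjugation is trivial on $\mathcal{O}_E$, and I compose the quotient $\mathcal{O}_E \twoheadrightarrow \mathcal{O}_E/\mathfrak{p} = \mathbb{F}_p$ with the inclusion $\mathbb{F}_p \hookrightarrow \mathbb{F}_{p^2}$; this composition is a $\ast$-ring homomorphism because the canonical involution on $\mathbb{F}_{p^2}$ restricts to the identity on $\mathbb{F}_p$, matching the trivial involution on $\mathcal{O}_E$.

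Finally, Proposition~\ref{prop:idealbasics}(c) ensures that only finitely many primes of $\mathcal{O}_E$ contain the nonzero element $c$, so excluding these still leaves infinitely many $\mathfrak{p}$ yielding a valid $f(\Phi)$. Since each associated $q = p$ is a distinct rational prime, these $q$ are automatically pairwise coprime. The main obstacle is the density-theoretic step: producing infinitely many primes of $\mathcal{O}_E$ whose Frobenius realizes complex conjugation. Chebotarev dispatches this, though in fact only the weaker classical statements (infinitude of completely split primes when $E$ is totally real, or infinitude of primes of $E^\kappa$ inert in the quadratic extension $E/E^\kappa$ otherwise) are strictly required. The rest of the argument is routine bookkeeping to check that the two involutions are compatible in each case.
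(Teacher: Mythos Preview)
Your argument is correct and follows the same overall strategy as the paper: find infinitely many prime ideals $\mathfrak{p}\subset\mathcal{O}_E$ that are stable under complex conjugation and avoid the frame constant $c$, then check that the quotient map (possibly followed by a quadratic extension) is a $\ast$-ring homomorphism carrying $\Phi$ to an ETF.

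The one genuine difference is in the density input. The paper works relative to $F=E\cap\mathbb{R}$ and invokes the Frobenius density theorem for the (at most quadratic) extension $E/F$ to produce infinitely many primes $\mathfrak{q}\subset\mathcal{O}_F$ that remain inert in $\mathcal{O}_E$; the resulting $\mathfrak{p}=\mathfrak{q}\mathcal{O}_E$ is then visibly conjugation-stable, and $q=|\mathcal{O}_F/\mathfrak{q}|$ may be a nontrivial prime power, so a separate argument is needed to get infinitely many characteristics. You instead apply Chebotarev to $E/\mathbb{Q}$ and select primes $\mathfrak{p}$ whose Frobenius is exactly $\kappa$. This is a slightly sharper choice: it forces the residue degree over $\mathbb{Q}$ to be $|\langle\kappa\rangle|\in\{1,2\}$, so your $q=p$ is always a rational prime and pairwise coprimality is automatic. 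You correctly observe that only the weaker inert-prime statement for $E/E^{\kappa}$ is really needed, which is precisely the paper's route. Either way the argument goes through; your packaging via Chebotarev is marginally cleaner because it identifies the induced involution with the residue Frobenius in one stroke and avoids the extra bookkeeping about characteristics.
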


Our proof of Theorem~\ref{thm:project} uses the following.

\begin{lemma}
\label{lem: nice ideals}
Let $E$ be a Galois number field. 
Given any choice of nonzero algebraic integers $z_1,\dotsc,z_n \in E$, there exist infinitely many ideals $\mathfrak{p} \subset \mathcal{O}_E$ such that all of the following are true:
\begin{itemize}
\item[(i)]
$\mathcal{O}_E/\mathfrak{p}$ is a finite field,
\item[(ii)]
$\mathfrak{p}$ is closed under complex conjugation,
\item[(iii)]
$\mathfrak{p}$ does not contain any of $z_1,\dotsc,z_n$.
\end{itemize}
Additionally, the set of field characteristics
\[
\{ \operatorname{char} \mathcal{O}_E/ \mathfrak{p} : \mathfrak{p} \subset \mathcal{O}_E \text{ is an ideal satisfying (i)--(iii) } \}
\]
so obtained is infinite.
\end{lemma}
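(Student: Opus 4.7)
The plan is to verify (i) and (iii) directly from Proposition~\ref{prop:idealbasics}, and to produce the required primes satisfying (ii) via decomposition theory for the extension $E/E^+$, where $E^+ := E \cap \mathbb{R}$ is the maximal totally real subfield.

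First, (i) is automatic for any nonzero prime ideal $\mathfrak{p} \subset \mathcal{O}_E$: by Proposition~\ref{prop:idealbasics}(b), $\mathfrak{p}$ is maximal, so $\mathcal{O}_E/\mathfrak{p}$ is a field, and Proposition~\ref{prop:idealbasics}(a) makes it finite. For (iii), each nonzero ideal $(z_i) \subset \mathcal{O}_E$ is contained in only finitely many primes by Proposition~\ref{prop:idealbasics}(c), so the set of primes meeting the forbidden condition is finite and can be excised from any infinite supply. It therefore suffices to exhibit infinitely many nonzero primes $\mathfrak{p} \subset \mathcal{O}_E$ with $\overline{\mathfrak{p}} = \mathfrak{p}$ whose residue characteristics range over an infinite set of rational primes.

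If $E$ is totally real then complex conjugation restricts trivially to $E$, every prime of $\mathcal{O}_E$ is fixed, and the conclusion follows trivially by picking one prime above each of the infinitely many rational primes. Otherwise, since $E/\mathbb{Q}$ is Galois, complex conjugation restricts to an order-$2$ automorphism $c$ of $E$ with fixed field $E^+$, and $[E:E^+] = 2$. A prime $\mathfrak{p} \subset \mathcal{O}_E$ satisfies $\overline{\mathfrak{p}} = \mathfrak{p}$ if and only if it is the unique prime lying above its contraction $\mathfrak{q} := \mathfrak{p} \cap \mathcal{O}_{E^+}$; equivalently, $\mathfrak{q}$ is inert or ramified in $E/E^+$. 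The plan is to invoke the Chebotarev density theorem for the quadratic extension $E/E^+$: the set of primes of $\mathcal{O}_{E^+}$ inert in $E$ has density $1/2$ and is therefore infinite. For each such $\mathfrak{q}$, the ideal $\mathfrak{p} := \mathfrak{q}\mathcal{O}_E$ is a prime of $\mathcal{O}_E$ satisfying (ii). Because each rational prime $p$ lies below at most $[E^+:\mathbb{Q}]$ primes of $\mathcal{O}_{E^+}$, our infinite family of $\mathfrak{q}$'s must lie above infinitely many rational primes, and the corresponding $\mathfrak{p}$'s then cover infinitely many residue characteristics.

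The only nontrivial input is the Chebotarev density theorem (or any weaker substitute guaranteeing infinitely many inert primes in a nontrivial quadratic extension of number fields); the remaining manipulations—identifying fixed primes via $E^+$, excising the finitely many primes containing some $z_i$, and controlling the residue characteristic—are routine applications of Proposition~\ref{prop:idealbasics} and elementary decomposition theory. I expect the technical bookkeeping around ramification to be a minor annoyance rather than a genuine obstacle, since only finitely many primes of $\mathcal{O}_{E^+}$ ramify in $E$ and so may be discarded with no harm to the infinitude claim.
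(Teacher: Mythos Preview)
Your proposal is correct and follows essentially the same route as the paper: reduce to the real subfield $E^+ = E\cap\mathbb{R}$, invoke a density theorem to produce infinitely many primes of $\mathcal{O}_{E^+}$ inert in $E$, lift them to conjugation-stable primes of $\mathcal{O}_E$, and discard the finitely many containing some $z_i$ or lying over a fixed rational prime. The only cosmetic differences are that the paper cites the Frobenius density theorem rather than Chebotarev, and does not separate out the totally real case (where your treatment is actually a bit cleaner).
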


\begin{proof}
Let $F = E \cap \mathbb{R}$.
Then $F$ is the fixed field for the subgroup of $\operatorname{Gal}(E/\mathbb{Q})$ generated by complex conjugation. 
By the Fundamental Theorem of Galois Theory, $E$ is a Galois extension of $F$ with automorphism group $\operatorname{Gal}(E/F) \cong \mathbb{Z}/2\mathbb{Z}$.
In particular, $E$ is a cyclic extension of $F$.

As a consequence of the Frobenius density theorem (Cor.~5.4 in \cite[Ch.~V]{Ja73}), there are infinitely many prime ideals $\mathfrak{q} \subset \mathcal{O}_F$ for which $\mathfrak{p} = \mathfrak{q} \mathcal{O}_E$ is a prime ideal of $\mathcal{O}_E$.
Furthermore, for each $\mathfrak{p} \subset \mathcal{O}_E$ there is at most one prime ideal $\mathfrak{q} \subset \mathcal{O}_F$ such that $\mathfrak{p} = \mathfrak{q} \mathcal{O}_E$, namely $\mathfrak{q} = \mathfrak{p} \cap \mathcal{O}_F$.
(Indeed, if $\mathfrak{q} \subset \mathcal{O}_F$ is prime and $\mathfrak{p} = \mathfrak{q} \mathcal{O}_E$, then $\mathfrak{p} \cap \mathcal{O}_F$ contains the maximal ideal $\mathfrak{q}$.)
Hence there are infinitely many prime ideals $\mathfrak{p} \subset \mathcal{O}_E$ of the form $\mathfrak{p}=\mathfrak{q} \mathcal{O}_E$, where $\mathfrak{q}$ is a prime ideal of $\mathcal{O}_F$.
Moreover, there are only finitely many prime ideals in $\mathcal{O}_E$ that contain any of the ideals $z_1\mathcal{O}_E,\dotsc,z_n\mathcal{O}_E$.
Overall, there are infinitely many choices of prime ideals $\mathfrak{p} \subset \mathcal{O}_E$ that avoid  $z_1,\dotsc,z_n$ and take the form $\mathfrak{p}= \mathfrak{q} \mathcal{O}_E$, where $\mathfrak{q} \subset \mathbb{R}$.
Any such $\mathfrak{p}$ is closed under complex conjugation since this is true of both $\mathfrak{q}$ and $\mathcal{O}_E$, and $\mathcal{O}/\mathfrak{p}$ is a finite field by Proposition~\ref{prop:idealbasics}.

To prove the ``additionally'' statement, we verify that any prime ideal $\mathfrak{p}$ contains $(\operatorname{char} \mathcal{O}_E/\mathfrak{p}) \mathcal{O}_E$.
By considering the minimal polynomial of a nonzero element of $\mathfrak{p}$, we see that $\mathfrak{p}$ contains a nonzero integer (namely the opposite of the polynomial's constant term).
As such, there exists a positive prime $p\in \mathbb{Z}$ such that $p \mathbb{Z} = \mathfrak{p} \cap \mathbb{Z}$, the latter being a nonzero prime ideal of $\mathbb{Z}$.
We have $p \mathcal{O}_E \subset \mathfrak{p}$, and the containment
\[
\mathcal{O}_E / \mathfrak{p} \supset (\mathbb{Z} + \mathfrak{p})/\mathfrak{p} \cong \mathbb{Z} / (\mathbb{Z} \cap \mathfrak{p}) \cong \mathbb{Z}/p\mathbb{Z},
\]
demonstrates that $p = \operatorname{char} \mathcal{O}_E / \mathfrak{p}$.
This proves the claim.
By Proposition~\ref{prop:idealbasics}(c), there are only finitely many prime ideals $\mathfrak{p} \subset \mathcal{O}_E$ associated with any given characteristic $p = \operatorname{char} \mathcal{O}_E / \mathfrak{p}$.
Since infinitely many prime ideals satisfy (i)--(iii), the set of associated field characteristics must also be infinite.
\end{proof}

\begin{proof}[Proof of Theorem~\ref{thm:project}]
First observe that the frame constant $c$ for $\Phi$ belongs to $\mathcal{O}_E$ since $cI = \Phi \Phi^*$.
Select an ideal $\mathfrak{p} \subset \mathcal{O}_E$ as in Lemma~\ref{lem: nice ideals}, where $\mathfrak{p}$ does not contain $c$. 
Denote $K = \mathcal{O}_E/\mathfrak{p}$, and let $g \colon \mathcal{O}_E \to K$ be the quotient mapping. 
Since $\mathfrak{p}$ is closed under complex conjugation, there is a well-defined field automorphism $\sigma$ of $K$ given by $g(x)^\sigma = g(\overline{x})$ for $x \in \mathcal{O}_E$, and $\sigma^2 = 1$. 
By passing to a quadratic extension of $K$ if necessary, we obtain a finite field $\mathbb{F}_{q^2}$ and a ring homomorphism $f \colon \mathcal{O}_E \to \mathbb{F}_{q^2}$ with kernel $\mathfrak{p}$, such that $f(\overline{x}) = x^q$ for every $x \in \mathcal{O}_E$.
Then $f(\Phi)f(\Phi)^* = f(\Phi \Phi^*) =  f(c)I \neq 0$, and so $f(\Phi)$ is a tight frame for $\mathbb{F}_{q^2}^d$.
The other properties are verified similarly, and $f(\Phi)$ is an ETF.
\end{proof}

Finally, we obtain our main result as a corollary of Theorem~\ref{thm:project}.

\begin{proof}[Proof of Theorem~\ref{thm: C to F}]
Suppose there is an ETF $\Phi \in \mathbb{C}^{d\times n}$.
By Theorem~\ref{thm:alg} we may take $\Phi$ to have algebraic entries.
After rescaling $\Phi$ to clear any algebraic integer denominators, we may assume its entries are in fact algebraic integers.
Now let $E$ be any Galois extension of $\mathbb{Q}$ containing the entries of $\Phi$, and apply Theorem~\ref{thm:project}.
\end{proof}

\begin{remark}
\label{rem: project Gram}
Many ETFs $\Phi \in \mathbb{C}^{d\times n}$ are constructed by way of their Gram matrices $G = \Phi^* \Phi$.
Often $G$ carries a nice structure while $\Phi$ is essentially unknown (yet guaranteed to exist by Cholesky decomposition).
In such cases it may be desirable to project $G$ into a finite field instead of $\Phi$ itself.
The procedure in Theorem~\ref{thm:project} works just as well to map $G$ into finite fields with infinitely many characteristics, provided the entries of $G$ are all algebraic integers.
Indeed, let $E$ be a number field whose integer ring $\mathcal{O}_E$ contains the entries of $G$ as well as its nonzero eigenvalue $c$.
Choose any ideal $\mathfrak{p} \subset \mathcal{O}_E$ not containing $c$ as in Lemma~\ref{lem: nice ideals}, and let $f \colon \mathcal{O}_E \to \mathbb{F}_{q^2}$ be a $\ast$-ring homomorphism with kernel $\mathfrak{p}$.
Then $f(G) \in \mathbb{F}_{q^2}^{n\times n}$ is self-adjoint, and $d' := \operatorname{rank} f(G) \leq \operatorname{rank} G = d$ since the rank of a matrix is the largest size of a square submatrix having nonzero determinant.
Moreover, applying $f$ to the coefficients of the characteristic polynomial of $G$ shows that $\operatorname{det} \bigl( xI - f(G) \bigr) = [x - f(c)]^d x^{n-d}$.
Considering the Jordan normal form of $f(G)$ we conclude that $d' \geq d$, and equality holds.
Applying Theorem~\ref{thm: factor U}, we conclude that $f(G)$ factors to produce a $d \times n$ ETF in a unitary geometry over $\mathbb{F}_{q^2}$.
\end{remark}

\begin{example}
Let $k \geq 2$ be a positive integer, and let $\alpha_k \in \mathbb{C}$ be a primitive $k$-th root of unity.
Suppose $\Phi \in \mathbb{C}^{d\times n}$ is an ETF whose frame constant $c$ and Gram matrix entries all lie in $\mathbb{Z}[\alpha_k]$.
Then we can project $\Phi$ into the complex model over a finite field as follows.
Choose any prime power $q$ such that $k$ divides $q+1$, and let $\omega_k \in \mathbb{T}_{q} \leq \mathbb{F}_{q^2}^\times$ be a generator for the unique subgroup of order $k$.
Denote the $k$-th cyclotomic polynomial by $m_k(x) \in \mathbb{Z}[x]$, and recall that $x^k - 1 = \prod_{j \mid k} m_j(x)$.
Then $\mathbb{Z}[\alpha_k] \cong \mathbb{Z}[x] /  \bigl(m_k(x) \mathbb{Z}[x] \bigr)$, and $m_k(\omega_k) = 0$ since $\omega_k$ is a root of $x^k - 1$ but not of $x^j - 1$ for $j < k$.
Consequently, there is a well-defined ring homomorphism $f \colon \mathbb{Z}[\alpha_k] \to \mathbb{F}_{q^2}$ given by $f(g(\alpha_k)) = g(\omega_k)$ for every $g \in \mathbb{Z}[x]$.
Furthermore, $f(\overline{z}) = f(z)^q$ for every $z \in \mathbb{Z}[\alpha_k]$ since $\overline{\alpha_k} = \alpha_k^{k-1}$ and $\omega_k^{k-1} = \omega_k^q$.
It follows easily that $f(\Phi^* \Phi) \in \mathbb{F}_{q^2}^{n\times n}$ is the Gram matrix of an ETF in the complex model on $\mathbb{F}_{q^2}^{d'}$, where $d' = \operatorname{rank} f(\Phi^* \Phi)$.
We have $d' \leq d$ with equality if $f(c) \neq 0$, as in Remark~\ref{rem: project Gram}.
Finally, if $\Phi \in \mathbb{Z}[\alpha_k]^{d \times n}$ and $f(c) \neq 0$ then $f(\Phi) \in \mathbb{F}_{q^2}^{d\times n}$ is itself an ETF, as in the proof of Theorem~\ref{thm:project}.
\end{example}

\begin{example}
\label{ex: SIC projections}
Table~\ref{tbl: SIC projections} describes some finite unitary geometries that admit Gabor ETFs, where the abelian groups that provide translations and modulations are cyclic.
In creating the table we began with a known fiducial vector for a complex Gabor ETF, and then applied the construction of Theorem~\ref{thm:project} to map it into a finite field.
The fiducial vector $\varphi \in \mathbb{C}^d$ was taken from one of~\cite{Gr,SG10,ACFW18}.
In each case the resulting Gabor frame $\Phi \in \mathbb{C}^{d\times d^2}$ had entries in a number field $E$, and we multiplied by a scalar to clear fractions and put the entries in $\mathcal{O}_E$.
Defining $F = E \cap \mathbb{R}$, we then used Magma~\cite{magma} to identify a prime ideal $\mathfrak{q} \subset \mathcal{O}_F$ for which $\mathfrak{p}:=\mathfrak{q} \mathcal{O}_E$ remained a prime ideal of $\mathcal{O}_E$.
Table~\ref{tbl: SIC projections} gives the size of $\mathcal{O}_E / \mathfrak{p} \cong \mathbb{F}_{q^2}$.
Here, the quotient mapping produces a $\ast$-ring homomorphism $f \colon \mathcal{O}_E \to \mathbb{F}_{q^2}$, and $f(\Phi) \in \mathbb{F}_{q^2}^{d\times d^2}$ is an ETF as in Theorem~\ref{thm:project}.
Furthermore, considering the images under $f$ of complex translation and modulation matrices, we see that $f(\Phi)$ is itself a Gabor ETF.

We emphasize that Table~\ref{tbl: SIC projections} does not describe all finite fields that admit projections of the given complex fiducial vectors.
Our method focused on ideals of $\mathcal{O}_E$ that lie entirely in $\mathbb{R}$, but in many cases there are other prime ideals of $\mathcal{O}_E$ that are closed under complex conjugation, and the corresponding quotient mappings yield Gabor ETFs in other finite fields not listed in Table~\ref{tbl: SIC projections}.
\end{example}

\begin{table}
\begin{tabular}{c|l}
        $\varphi$ &  $q$  \\
        \hline
        2a & $167$, $191$, $239$, $263$, $311$, $383$, $743$, $863$, $887$, $911$, $983$, $1031$, $1103$ \\
        3a, 3b, 3c &  $167$, $191$, $239$, $263$, $311$, $383$, $743$, $863$, $887$, $911$, $983$, $1031$, $1103$ \\
        4a & $71$, $191$, $239$, $311$, $359$, $431$, $479$, $599$, $719$, $839$, $911$, $1031$, $1151$\\%
        5a & $179$, $239$, $359$, $419$, $479$, $599$, $659$, $719$, $839$, $1019$, $1259$, $1319$ \\
        6a & $47^3$, $59^3$, $83^3$, $131$, $167^3$, $227$, $251^3$, $311^3$, $383^3$, $419^3$, $467$, $479^3$ \\
        7a, 7b & $31^3$, $47^3$, $103^3$, $167$, $199^3$, $223$, $271^3$, $311^3$, $367^3$, $383^3$, $439^3$, $479^3$ \\
        8a & $479$, $911$, $2351$, $2399$, $2591$, $2879$, $3119$,  $5279$, $5471$, $5711$, $6959$ \\
        8b & $79$, $191$, $239$, $271$, $431$, $479$, $719$, $751$, $911$, $991$, $1039$, $1151$ \\
        9a, 9b & $11^3$, $59^3$, $71^3$, $131^3$, $179^3$, $191^3$, $239^3$, $251^3$, $311^3$, $359^3$, $419^3$, $431$ \\
        10a & $19^3$, $479^3$, $1319^3$, $1559$, $1979^3$, $2939$, $2999^3$, $3659^3$, $3779^3$, $4259^3$\\ 
        11a, 11b & $239^5$, $2879^5$, $5519^5$, $10559$, $11519^5$, $12239^5$, $14159^5$, $14519$  \\ 
        11c & $167^5$, $239^5$, $1223^5$, $1487^5$, $2063^5$, $2111$, $2207^5$, $2543^5$, $2591^5$, $2879^5$\\
        12a &  $263^3$, $503^3$, $599^3$, $647^3$, $1439^3$, $1871^3$, $2063^3$, $2207$, $2447^3$, $2591$  \\
        12b & $263$, $503$, $599$, $647$, $1439$, $1871$, $2063$, $2207$, $2447$, $2591$, $2687$  \\ 
        13a, 13b &  $251^3$, $439^3$, $1291^3$, $1559$, $2539^3$, $3631^3$, $4339^3$ $4679$, $5431^3$, $5659^3$ \\
        14a, 14b &  $131^3$, $479^3$, $1091^3$, $1151^3$, $1319^3$, $1559^3$, $1811^3$, $1931^3$, $1979^3$  \\
        15a, 15b, 15c & $239^3$, $359^3$, $719^3$, $1319^3$, $2879$, $2999^3$, $3359^3$, $4799^3$, $4919$, $5039^3$ \\
        15d &  $59^3$, $179^3$, $239^3$, $359^3$, $419$, $479$, $659$, $719^3$, $839^3$, $1019$, $1259^3$%, $1319^3$
    \end{tabular}

\medskip
\caption{
The first column describes a known fiducial vector for a complex Gabor ETF, as in~\cite{SG10, ACFW18, Gr}.
The number $d$ in front of the letter gives the dimension of the ETF, and $\mathbb{Z}/d\mathbb{Z}$ is the abelian group that provides translations and modulations.
The second column lists some prime powers $q$ for which the complex ETF may be projected into $\mathbb{F}_{q^2}^{d \times d^2}$ as a Gabor ETF over a finite field.
The finite field fiducial vectors are included in an ancillary file with the arXiv version of this paper.
Example~\ref{ex: SIC projections} gives our methodology.
}
\label{tbl: SIC projections}
\end{table}

\subsection{Open problems}

We end with problems for future investigation.
The first is the finite field analog of Zauner's conjecture (Conjecture~\ref{conj: Zauner complex}).

\begin{conjecture}[Zauner's conjecture over finite fields]
\label{conj: finite field Zauner}
For every $d$, there exist infinitely many pairwise coprime $q$ such that a unitary geometry on $\mathbb{F}_{q^2}$ admits an ETF of $n = d^2$ vectors.
\end{conjecture}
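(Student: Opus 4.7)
The natural first strategy is via reduction to the complex case: if Conjecture~\ref{conj: Zauner complex} holds, then Theorem~\ref{thm: C to F} delivers Conjecture~\ref{conj: finite field Zauner} at once. So the substantive problem is to prove the finite-field version \emph{unconditionally}, hoping that the richer supply of characteristics makes the problem more tractable rather than less.

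The first concrete step I would take is to push the Heisenberg--Gabor construction of Theorem~\ref{thm: Zauner} well beyond the single case $(G,q) = ((\mathbb{Z}/2\mathbb{Z})^{2l+1}, 3)$. Proposition~\ref{prop: Gabor tight frame} guarantees an NTF for \emph{any} nonzero fiducial, so the entire problem collapses to finding $\varphi \in \mathbb{F}_{q^2}^G$ for which the product $\langle \varphi, T_x M_y \varphi \rangle \langle T_x M_y \varphi, \varphi \rangle$ is constant on $(G\times G) \setminus \{(0,0)\}$. For each abelian $G$ of order $d$ and each $q$ with every factor $d_k$ dividing $q+1$, this is a finite system of polynomial equations over $\mathbb{F}_{q^2}$, and one can hope to resolve it by Gr\"obner bases, exhaustive search, or invariant-theoretic reduction under the normaliser of the Heisenberg group in $\mathrm{U}(d,q)$. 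Fixing $d$ and varying $(G,q)$, one aims to hit infinitely many pairwise coprime $q$, the existence of which in each admissible congruence class is supplied by Dirichlet.

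If pure Gabor constructions prove insufficient in some dimension, the natural fall-back is to combine the remaining two mechanisms built in the paper. Theorem~\ref{thm: harm diff} converts any $p$-modular difference set of cardinality $d$ inside a cyclic group of size $d^2$ (with $d^2 \mid q+1$) into the desired ETF. Failing that, one may search for an algebraic Gram matrix $G$ of size $d^2 \times d^2$ with $G = G^*$, $G^2 = cG$, $\operatorname{rank} G = d$, and constant off-diagonal modulus, over a number ring $\mathcal{O}_E$; by Theorem~\ref{thm:project}, Lemma~\ref{lem: nice ideals}, and Remark~\ref{rem: project Gram}, any such $G$ automatically projects into infinitely many pairwise coprime $\mathbb{F}_{q^2}$ carrying a unitary ETF, whether or not a closed-form Cholesky factor is available.

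The main obstacle is, unavoidably, the same one that has defeated Zauner over $\mathbb{C}$ for two decades: no dimension-independent recipe is known for producing the fiducial vector or the Gram matrix. What the finite-field setting adds is that every candidate fiducial lives in a \emph{finite} search space, so computer algebra can in principle certify existence (or absence) in any fixed dimension. A realistic intermediate milestone is to extend Theorem~\ref{thm: Zauner} to elementary abelian groups $G = (\mathbb{Z}/p\mathbb{Z})^m$ with $p$ small and $m$ ranging over an infinite arithmetic progression, which would already exhibit Gerzon equality across a much broader territory than is currently known and sharpen intuition for the full conjecture.
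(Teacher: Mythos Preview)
The statement in question is a \emph{conjecture}, not a theorem: the paper presents it as an open problem and offers no proof. The only remark the paper makes is precisely your first sentence---that Conjecture~\ref{conj: Zauner complex} together with Theorem~\ref{thm: C to F} would imply Conjecture~\ref{conj: finite field Zauner}---and then moves on to further open problems.

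Your proposal is therefore not a proof, and you are commendably honest about this: you explicitly call the problem ``unavoidably, the same one that has defeated Zauner over $\mathbb{C}$ for two decades'' and frame everything as a research programme (push Gabor constructions, try $p$-modular difference sets, search for algebraic Gram matrices and project). These are all sensible avenues and are indeed the tools the paper itself develops, but none of them is shown to cover every dimension $d$, nor do you claim otherwise. In short, there is no gap to identify because there is no claimed proof---your write-up is a strategy outline, and on the one point where it overlaps with the paper's commentary (the conditional reduction to the complex case), it agrees exactly.
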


According to Theorem~\ref{thm: C to F}, if Zauner's conjecture holds over the complex numbers then Conjecture~\ref{conj: finite field Zauner} is also satisfied.
More generally, we pose the following.

\begin{problem}
For which $(d,q)$ does a unitary geometry on $\mathbb{F}_{q^2}^d$ admit an ETF of $n = d^2$ vectors?
\end{problem}

Next, the converse of Theorem~\ref{thm: C to F} remains open.

\begin{problem}
\label{prob: F to C}
If there exist $d\times n$ ETFs in unitary geometries over infinitely many finite fields with distinct characteristics, does there also exist a $d\times n$ complex ETF?
\end{problem}

If the words ``unitary'' and ``complex'' are replaced by ``orthogonal'' and ``real'' in Problem~\ref{prob: F to C}, then the resulting question has an affirmative answer.
See Proposition~3.3 of~\cite{FFF2}.

\begin{problem}
\label{prob: nec conds}
Identify necessary conditions for the existence of $d\times n$ ETFs in finite unitary geometries.
\end{problem}

An effective solution to Problem~\ref{prob: nec conds}, when paired with Theorem~\ref{thm: C to F}, would provide necessary conditions on the existence of complex ETFs.

\section*{Acknowledgments}
GRWG was partially supported by the Singapore Ministry of Education Academic Research Fund (Tier 1); grant numbers: RG29/18 and RG21/20.
JJ was supported by NSF DMS 1830066.
DGM was partially supported by AFOSR FA9550-18-1-0107 and NSF DMS 1829955.
This project began at the 2018 MFO Mini-Workshop on Algebraic, Geometric, and Combinatorial Methods in Frame Theory.
The authors thank the other participants, Matt Fickus, and Steve Flammia for comments that motivated the authors or provided insight.

\bigskip

\bibliographystyle{abbrv}
\bibliography{refsff1}

\end{document}